\newcommand{\R}{\mathbb R}
\newcommand{\N}{\mathbb N}
\newcommand{\np}{\mathcal{F}}
\newcommand{\id}{\mathrm{Id}}
\newcommand{\per}{\mathcal{P}}
\newcommand{\nl}{\mathcal{NL}}
\newcommand{\meno}{\!\setminus\!}
\renewcommand{\div}{{\rm div}}
\newcommand{\dx}{\;\mathrm{d}x}
\newcommand{\dy}{\;\mathrm{d}y}
\newcommand{\h}{\mathcal{H}^{N-1}}
\renewcommand{\dh}{\;\mathrm{d}\mathcal{H}^{N-1}}
\newcommand{\dist}{\mathrm{d}}
\renewcommand{\Pi}{\mathbb{T}}
\theoremstyle{plain}
\newtheorem{theorem}{Theorem}[section]
\newtheorem{lemma}[theorem]{Lemma}
\newtheorem{proposition}[theorem]{Proposition}
\newtheorem{corollary}[theorem]{Corollary}
\theoremstyle{definition}
\newtheorem{definition}[theorem]{Definition}
\newtheorem{remark}[theorem]{Remark}
\theoremstyle{remark}
\mathchardef\emptyset="001F
\numberwithin{equation}{section}
\newcommand{\measurerestr}{%
  \,\raisebox{-.127ex}{~\reflectbox{\rotatebox[origin=br]{-90}{$\lnot$}}}\,%
}
\title[Periodic critical points]{On periodic critical points  and local minimizers of the Ohta-Kawasaki functional}
\author{R.\ Cristoferi}
\begin{document}

\begin{abstract}
In this paper we collect some new observations about periodic critical points and local minimizers of a nonlocal isoperimetric problem arising in the modeling of diblock copolymers. In the main result, by means of a purely variational procedure, we show that it is possible to construct (locally minimizing) periodic critical points whose shape resemble that of any given strictly stable  constant mean curvature (periodic) hypersurface.
Along the way, we establish several auxiliary results of independent interest. 
\end{abstract}

\maketitle

\section{Introduction}

In this paper we study some properties of critical points of the functional
\begin{equation}\label{eq:npp}
\mathcal{F}^\gamma(E):=\per_{\Pi^N}(E)+\gamma\int_{\Pi^N}\int_{\Pi^N}G_{\Pi^N}(x,y)u^E(x)u^E(y)\dx\dy\,,
\end{equation}
where $\gamma\geq 0$, $E$ is a subset of the $N$-dimensional flat torus $\Pi^N$, $N\geq 2$, $\per_{\Pi^N}(E)$ denotes the perimeter of $E$ in $\Pi^N$,
$u^E(x):=\chi_E(x)-\chi_{\Pi^N\setminus E}(x)$, and, for every $x\in\Pi^N$, $G_{\Pi^N}(x,\cdot)$ is the unique solution of
$$
-\triangle_y G_{\Pi^N}(x,\cdot)=\delta_x(\cdot)-1\quad\text{ in } \Pi^N,\quad\quad\quad\ \int_{\Pi^N}G_{\Pi^N}(x,y)\dy=0\,.
$$
We will refer to the first term of \eqref{eq:npp} as the \emph{local term}, while to the second one as the \emph{nonlocal term}. The latter will be denoted with $\gamma\nl(E)$.
We notice that the local term favors the formation of large regions of pure phase, while the nonlocal one prefers the function $u^E$ to oscillate (see Remark \ref{rem:nonloc}).

The functional \eqref{eq:npp} arises as the variational limit (in the sense of $\Gamma$-convergence) of the $\varepsilon$-diffuse Ohta-Kawasaki energy
\begin{align}\label{eq:ok}
OK_\varepsilon(u):=\varepsilon \int_\Omega &|\nabla u|^2\mathrm{d}x + \frac{1}{\varepsilon}\int_\Omega (u^2-1)^2\mathrm{d}x \nonumber\\
&+ \gamma \int_\Omega\int_\Omega G(x,y) \bigl( u(x)-m \bigr)\bigl( u(y)-m \bigr)\mathrm{d}x\mathrm{d}y\,,
\end{align}
where $\Omega\subset\R^N$ is an open set, $G$ is the Green's function for $-\triangle$, $u\in H^1(\Omega)$, and $m:=\fint_\Omega u$.
The functional $OK_\varepsilon$ has been introduced by Ohta and Kawasaki in \cite{OhtaKaw} to model microphase separation of a class of two-phase materials called diblock copolymers (see \cite{ChoRen} for a rigorous derivation of the Ohta-Kawasaki energy from first principles, and \cite{Mur} for a physical background on long-range interaction energies). These materials are linear-chain macromolecules, each consisting of two thermodynamicalLy incompatible subchains joined covalently, that correspond to the regions where $u\approx-1$ and $u\approx+1$ respectively.
Due to this incompatibility, the two phases try to separate as much as possible; on the other hand, because of the chemical bonds, only partial separation can occur at a suitable mesoscale.
Such a partial segregation of these chains produces very complex patterns, that are experimentally observed to be (quasi) periodic at an intrinsic scale.
The structure of these patterns depends strongly on the volume fraction of a phase with respect to the other, but they are seen to be very close to \emph{periodic surfaces with constant mean curvature} (see Figure ~\ref{fig:pattern}).

\begin{figure}[H]
\includegraphics[scale=0.25]{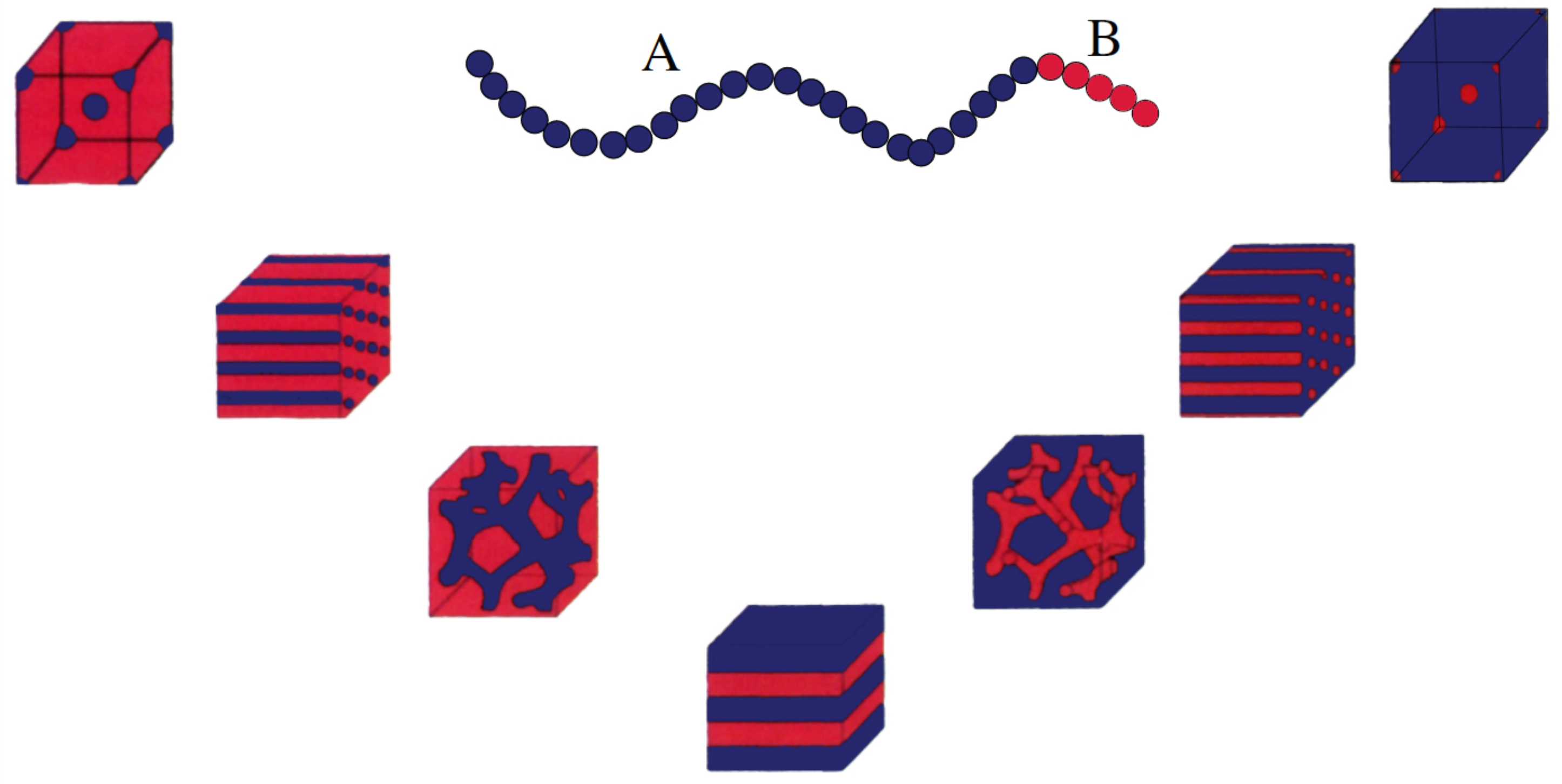}
\caption{The typical patterns that are observed according to an increasing value of the volume fraction.}\label{fig:pattern}
\end{figure}

According to the theory proposed by Ohta and Kawasaki in \cite{OhtaKaw}, we expect observable configurations to be global (or local) minimizers of the energy \eqref{eq:ok}. Since the parameter $\varepsilon$ is usually small, from the mathematical point of view it is more convenient to consider the variational limit of the energy $OK_\varepsilon$ that, in the periodic setting, turns out to be the sharp interface energy \eqref{eq:npp}.

Proving analitically that global minimizers of \eqref{eq:npp} or \eqref{eq:ok} are (quasi) periodic is a formidable task. In the case $N=1$, the periodicity of minimizers has been established by Ren and Wei in \cite{RenWei5} (see also the work by M\"{u}ller, \cite{M}), but in the case of higher dimensions it is still an open problem.
Indeed, so far, the best result in this direction is the work \cite{ACO} by Alberti, Choksi and Otto, where it is proved that global minimizers of \eqref{eq:npp} in big cubes under a volume constraint present a quasi uniform energy distribution of each component of the energy.
This result has been extended to the case of the functional \eqref{eq:ok} by Spadaro in \cite{Spad}.
Albeit the above conjecture is still open, the structure of global minimizers has been investigated by many authors (see, for example, \cite{ChoPel1, ChoPel2, CicSpa, GolMurSer1, GolMurSer2, MorSte, Mur2, Osh, SteTop, Top}), but only in some asymptotic regimes, \emph{i.e.}, when the parameter $\gamma$ is small or $m\approx\pm1$.

A more reasonable, but still highly nontrivial, purpose is to exhibit a class of local minimizers of the energies \eqref{eq:npp} and \eqref{eq:ok} that look like the observed configurations. Among the results in this direction we would like to recall the works by Ren and Wei (\cite{RenWei, RenWei1, RenWei2, RenWei3, RenWei4}), where they construct explicit critical configurations of the sharp interface energy, with lamellar, cylindrical and spherical patterns. They also provide a regime of the parameters that ensures the (linear) stability of such configurations. The natural notion of stability for \eqref{eq:npp} was introduced by Choksi and Strernberg in \cite{ChoSte} (see also \cite{Mur} for a formal derivation of the first and second variation of the functional), and it has been subsequently proved by Acerbi, Fusco and Morini in \cite{AFM}, that critical and strictly stable (namely with strictly positive second variation) configurations are local minimizers in the $L^1$ topology.\\

The aim of our work is to collect some new observations on critical points of the sharp interface energy \eqref{eq:npp}.  

We start by showing, in Proposition~\ref{prop:small},  that critical points are always local minimizers with respect to perturbations with sufficiently small support. This minimality-in-small-domains property of critical points is shared by many functionals of the Calculus of Variations, 
but to the best of our knowledge it has been never been observed before for the Ohta-Kawasaki energy.

The second result (see Proposition~\ref{prop:unif}) shows  that the property of being critical and stable is preserved under small perturbations of the parameter $\gamma$. More precisely, we show that, given $\bar\gamma\geq 0$ and a strictly stable critical point $E$ of the functional
$\mathcal{F}^{\bar\gamma}$, we can find a (unique) family $(E_\gamma)_\gamma$ of smoothly varying uniform local minimizers of $\mathcal{F}^{\gamma}$ for $\gamma$ ranging in a small neighborhood of $\bar\gamma$. 
The procedure to construct such a family is purely variational and based on showing that the local minimality criterion provided in \cite{AFM} can be made uniform with respect to the parameter $\gamma$ and with respect to critical sets ranging in a sufficiently small  $C^1$-neighborhood of a given strictly stable set $E$. Such an observation, which has an independent interest, is proven in Proposition~\ref{prop:unif}. We would like to remark that, in proving the above result, we in fact drastically simplify the general argument, by replacing \cite[Lemma 3.8]{AFM} with a penalization argument, that was inspired to us by \cite{DamLam}.

The above stability property is used to establish the main result of this paper (see Theorem~\ref{thm:main}): given $\bar\gamma>0$  and $\varepsilon>0$
and a subset $E$ of the torus $\Pi^N$ such that $\partial E$ is a strictly stable constant mean curvature hypersurface, we show that it is possible to  find an integer $k = k(\bar\gamma,\varepsilon)$ and a $1/k$-periodic critical point of $\mathcal{F}^{\bar\gamma}_{\Pi^N}$, whose shape is 
$\varepsilon$-close (in a $C^1$-sense) to the $1/k$-rescaled version of $E$ and whose mean curvature is almost constant.
Moreover, such a critical point is an isolated local minimizer with respect to $(1/k)$-periodic perturbations. 
In words, the above result says that it is possible to construct local minimizing periodic critical points of the energy \eqref{eq:ok}, whit a shape closely resembling that of any given strictly stable periodic constant mean curvature surface.


An important consequence of our variational procedure is that it allows to show (see Proposition~\ref{prop:approx}) that all the constructed critical points can be approximated by critical points of the $\varepsilon$-diffuse energy \eqref{eq:ok}. This is done by usign a $\Gamma$-convergence argument in the spirit of the Kohn and Sternberg theory, see \cite{KohSter}.

We conclude by remarking that numerical and experimental evidences suggest the following general structure for global minimizers: the nonlocal term determines an intrinsic scale of periodicity (the larger is  $\gamma$  the smaller is the periodicity scale), while the shape of the global minimizer inside the periodicity cell is dictated by the perimeter term. Although we are  very far from an analytical validation of such a picture, our result allows to construct a class of (locally minimizing) critical point that display the above structure. \\


\section{Preliminaries}

Given $k\in\N\setminus\{0\}$, we will denote by $\Pi^N_k$ the $N$-dimensional flat torus rescaled by a factor $1/k$, \emph{i.e.}, the quotient of
$\R^N$ under the equivalence relation
$$
\widehat{x} \sim_k \widehat{y} \quad\Leftrightarrow\quad k(\widehat{x}-\widehat{y}) \in \mathbb{Z}^N\,.
$$
Hereafter we will denote $\Pi^N_1$ by $\Pi^N$.
Points in $\Pi^N_k$ will be denoted by $x$, $y$.
A set $F\subset\Pi_k^N$ can be naturally identified with the $1/k$-periodic set of $\R^N$ (or of $\Pi^N$) that equals (a translate of) $F$ in each $1/k$-periodicity cell (see Figure \ref{fig:per} on the right).
When we speak about the regularity of a set $F\subset\Pi^N_k$, we will always refer to the regularity of the $1/k$-periodic set $F\subset\R^N$.
Finally, for $\beta\in(0,1)$ and $r\in\N$, we define the functional space $C^{r,\beta}(\Pi^N_k)$ as the space of $1/k$-periodic functions in $C^{r,\beta}(\R^N)$.

We now recall some geometric definitions: given a set $E\subset\Pi^N$ of class $C^2$, we will denote by $D_\tau$ the tangential gradient operator, by $\div_{\tau}$ the tangential divergence, by $\nu_E$ the normal vector field on $\partial E$, by $B_{\partial E}$ its second fundamental form, and by $|B_{\partial E}|^2$ its Euclidean norm, that coincides with the sum of the squares of the principal curvatures of $\partial E$. Finally, $H_{\partial E}$ denotes the sum of the principal curvatures of $\partial E$.

We are now in position to introduce the main object we will need in the paper.

\begin{definition}\label{def:per}
Given a set $E\subset\Pi^N$ and $k\in\N\setminus\{0\}$, we define the set $E^k\subset\Pi^N_k$ as follows:
$$
E^k:=\{ x\in\Pi^N_k \;:\; kx\in E \}\,.
$$
\end{definition}

\begin{figure}[H]
\begin{center}
\includegraphics[scale=2]{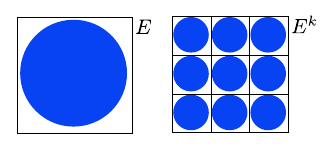}
\caption{A set $E\subset\Pi^N$ on the left, and the set $E^k$, with $k=3$, seen as a subset of $\Pi^N$, on the right.}
\label{fig:per}
\end{center}
\end{figure}

\begin{remark}
Notice that $\int_{\Pi^N}u^E\dx=\fint_{\Pi^N_k}u^{E_k}_k\dx$, where $u^F_k:=\chi_F-\chi_{\Pi^N_k\meno F}$.
\end{remark}

We now introduce the notion of perimeter in $\Pi^N_k$.

\begin{definition}
Let $E\subset\Pi^N_k$. We say that $E$ is a set of \emph{finite perimeter} in $\Pi^N_k$ if
$$
\sup\Bigl\{ \int_{E} \div\,\xi\dx \;:\; \xi\in C^1(\Pi^N_k;\R^N)\,,|\xi|\leq1 \Bigr\}<\infty\,.
$$
In this case we denote by $\per_k(E)$ the above quantity.
\end{definition}

We now introduce two ways for measuring how close two sets in $\Pi^N$ are.
The first one is an $L^1$ distance that takes into account the fact that our functional is translation invariant.

\begin{definition}\label{def:alpha}
Given two sets $E, F\subset\Pi^N_k$, we set:
$$
\alpha(E, F):=\min_{x\in\Pi^N_k}|E\triangle(x+F)|\,.
$$
\end{definition}

In the following we will also consider sets whose boundary is a normal graph over the boundary of another set.
Thus, we also need to measure how close such two sets are.

\begin{definition}
Let $E\subset\Pi^N_k$. Then, for sets $F\subset\Pi^N_k$ such that
$$
\partial F=\{ x+\psi(x)\nu_E(x)\;:\; x\in \partial E \}\,,
$$
for some function $\psi\in C^{r,\beta}(\partial E)$, we define
$$
\dist_{C^{r,\beta}}(E, F):=\|\psi\|_{C^{r,\beta}(\partial E)}\,.\\
$$
\end{definition}


\subsection{The area functional}

We recall some results about the area functional.

\begin{definition}
We say that a set $E\subset\Pi^N_k$ is a \emph{local minimizer of the area functional} if there exists $\delta>0$ such that
$$
\per_k(E)\leq \per_k(F)\,,
$$
for all $F\subset\Pi^N_k$ with $|E|=|F|$, such that $\alpha(E,F)\leq\delta$.
\end{definition}

\begin{definition} \label{def:quamin1}
A set $E\subset\Pi^N_k$ is said to be an \emph{$(\omega,r_0)$-minimizer} for the area functional, with $\omega>0$ and $r_0>0$,
if for every ball $B_r(x)$ with $r\leq r_0$ we have
$$
\per_k(E) \leq \per_k(F) + \omega |E\triangle F|,
$$
whenever $F\subset\Pi^N_k$ is a set of finite perimeter such that $E\triangle F\subset\subset B_r(x)$.
\end{definition}

We recall an improved convergence theorem for $(\omega,r_0)$-minimizers of the area functional. This result is well-known to the experts (see, for istance, \cite{Whi}). For a complete find see, for instance, \cite{CicLeo}.

\begin{theorem} \label{thm:W}
Let $(E_n)_n$ be a sequence of $(\omega,r_0)$-minimizers of the area functional such that
$$
\sup_n\per_k(E_n)<+\infty
\quad\text{and}\quad
\alpha(E_n, E)\rightarrow0\text{ as }n\rightarrow\infty\,,
$$
for some bounded set $E$ of class $C^2$. Then, for $n$ large enough, $E_n$ is of class $C^{1,\beta}$ for all $\beta\in(0,1)$, and
$$
\partial E_n = \{x+\psi_n(x)\nu_E(x) \,:\, x\in\partial E \},
$$
with $\psi_n\to0$ in $C^{1,\beta}(\partial E)$ for all $\beta\in(0,1)$.\\
\end{theorem}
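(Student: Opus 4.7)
The strategy I would follow is the classical one for \emph{improved convergence} of almost-minimal sets: pass from $L^1$ convergence of the sets to Hausdorff convergence of their boundaries via uniform density estimates, and then invoke a Tamanini-type $\varepsilon$-regularity theorem to upgrade this to uniform $C^{1,\beta}$ control of graph functions on a tubular neighborhood of $\partial E$. I would organize the argument in four steps.

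\emph{Step 1: from $L^1$ to Hausdorff convergence of boundaries.} By the definition of $\alpha$ there exist $x_n\in\Pi^N_k$ with $|E_n\triangle(x_n+E)|\to 0$, and after replacing $E_n$ by $E_n-x_n$ I may assume $|E_n\triangle E|\to 0$. The $(\omega,r_0)$-minimality yields the classical lower and upper density estimates for both $|E_n\cap B_r(x)|$ and $\mathcal{H}^{N-1}(\partial E_n\cap B_r(x))$ at every $x\in\partial E_n$ and every $r\le r_0$, with constants depending only on $N$, $\omega$ and $r_0$. These estimates, combined with the $L^1$ convergence and the smoothness of $\partial E$, force $\partial E_n\to\partial E$ in Hausdorff distance.

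\emph{Step 2: uniform $C^{1,\beta}$ regularity near $\partial E$.} Tamanini's $\varepsilon$-regularity theorem for $(\omega,r_0)$-minimizers asserts that if the spherical excess of $\partial E_n$ at a point is sufficiently small, then $\partial E_n$ is $C^{1,\beta}$ in a neighborhood of that point, with Schauder-type estimates depending only on $\omega$, $r_0$ and $N$. Since $\partial E$ is $C^2$ and $\partial E_n\to\partial E$ in Hausdorff distance, the excess of $\partial E_n$ at points close to $\partial E$ becomes uniformly small as $n\to\infty$. Hence, for $n$ large, $\partial E_n$ has no singular points in a tubular neighborhood of $\partial E$ and is globally $C^{1,\beta}$ there with uniform bounds.

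\emph{Step 3: graph representation and $C^{1,\beta}$ convergence.} Let $U$ be a $C^2$ tubular neighborhood of $\partial E$ and $\pi:U\to\partial E$ the smooth nearest-point projection. For $n$ large $\partial E_n\subset U$, and the uniform $C^{1,\beta}$ estimate forces $\nu_{E_n}$ to be close to $\nu_E\circ\pi$, so $\pi|_{\partial E_n}$ is a local diffeomorphism; a degree/continuity argument exploiting the Hausdorff proximity then promotes this to a global $C^{1,\beta}$ diffeomorphism, producing $\psi_n\in C^{1,\beta}(\partial E)$ with $\partial E_n=\{x+\psi_n(x)\nu_E(x):x\in\partial E\}$. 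The Hausdorff convergence gives $\|\psi_n\|_{L^\infty}\to 0$, and interpolating against the uniform $C^{1,\beta}$ bound (equivalently, applying Arzel\`a--Ascoli to $\psi_n$ and $D_\tau\psi_n$) yields $\psi_n\to 0$ in $C^{1,\beta'}$ for every $\beta'<\beta$. Since $\beta$ can be taken arbitrarily close to $1$, this gives the conclusion for every $\beta\in(0,1)$.

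The delicate point, and the one I expect to be the main obstacle, is the \emph{uniform} character of the $C^{1,\beta}$ regularity in Step 2: Tamanini's theorem must be applied at every base point along the sequence, and the smallness of the excess must be controlled uniformly in both $n$ and the base point. This requires coupling the $C^2$ regularity of $\partial E$ (which quantifies the flatness of $\partial E$ on small scales) with the Hausdorff convergence from Step 1 in a careful fashion; the remaining steps are comparatively soft.
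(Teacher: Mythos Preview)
Your sketch is correct and follows the standard route to improved convergence for almost-minimizers: density estimates to upgrade $L^1$ to Hausdorff convergence of boundaries, Tamanini's $\varepsilon$-regularity to get uniform $C^{1,\beta}$ control, and then a graph-representation plus interpolation argument to obtain $\psi_n\to 0$ in $C^{1,\beta}$. The paper, however, does not prove this theorem at all: it is stated as a known result, with references to \cite{Whi} and a complete proof in \cite{CicLeo}. Your outline is essentially the argument one finds in those references, so there is nothing to compare on the level of strategy.

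One small remark on Step~3: the interpolation argument as you phrase it gives $\psi_n\to 0$ in $C^{1,\beta'}$ only for $\beta'<\beta$, and you recover all exponents by letting $\beta\uparrow 1$. This is fine, but you could also argue directly that the tilt-excess converges to zero (since $\partial E$ is $C^2$, hence its own excess vanishes at small scales, and the excesses of $\partial E_n$ converge to those of $\partial E$ by Hausdorff convergence plus perimeter convergence), which via the excess-decay estimate yields $C^{1,\beta}$ convergence for every $\beta\in(0,1)$ in one stroke. Either way the conclusion holds.
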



\subsection{The functional $\np_k^\gamma$}\label{sec:funct}

We first define the functionals we are interested in.

\begin{definition}
Given $\gamma\geq 0$ and $k\in\N$, we define, for sets $E\subset\Pi^N_k$, the functional
\begin{align}\label{eq:effe}
\np_k^\gamma(E)&:=\per_k(E)+\gamma\nl_k(E) \nonumber \\
&:=\per_k(E)+\gamma\int_{\Pi^N_k}\int_{\Pi^N_k}G_k(x,y)u_k^E(x)u_k^E(y)\dx\dy\,,
\end{align}
where $u_k^E(x):=\chi_E(x)-\chi_{\Pi^N_k\setminus E}(x)$ and $G_k$ is the unique solution of
$$
-\triangle_y G_k(x,\cdot)=\delta_x(\cdot)-\frac{1}{|\Pi^N_k|}\quad\text{ in } \Pi^N_k,\quad\quad\quad \int_{\Pi^N_k}G_k(x,y)\dy=0\,.
$$
For simplicity, we will denote by $\np^\gamma$ and $u^E$ the functional $\np_1^\gamma$ and the function $u_1^E$ respectively.
\end{definition}

\begin{remark}
Notice that the area functional corresponds to the choice of $\gamma=0$.
\end{remark}

We now introduce the main objects under investigation in this paper: critical points and local minimizers.

\begin{definition}
A set $E\subset\Pi^N$ of class $C^2$ will be called \emph{critical} for the functional $\np^\gamma$ if there exists a constant $\lambda\in\R$ such that
$$
H_{\partial E} + 4\gamma v^E=\lambda\quad\quad\text{ on } \partial E\,,
$$
where
$$
v^E(x):=\int_{\Pi^N}G(x,y)u^E(y)\dy\,.
$$
\end{definition}

\begin{remark}
The above definition is motivated by the fact that (as one expects) the first variation of the functional $\np^\gamma$ vanishes on critical sets (see Theorem~\ref{thm:var}).
\end{remark}

\begin{definition}
We say that a set $E\subset\Pi^N_k$ is a \emph{local minimizer} of the the functional $\np_k^\gamma$, if there exists $\delta>0$ such that
$$
\np_k^\gamma(E)\leq \np_k^\gamma(F)\,,
$$
for all $F\subset\Pi^N_k$ with $|E|=|F|$, such that $\alpha(E,F)\leq\delta$. Moreover, we say that $E$ is an \emph{isolated local minimizer} if the above inequality is strict whenever $\alpha(E,F)>0$.
\end{definition}

We now want to derive some regularity properties of local minimizers of $\np_k^\gamma$.
In order to do this, we observe that local minimizers of $\np_k^\gamma$ are in fact $(\omega,r)$-minimizer, and then we will rely on the well-known regularity theory for $(\omega,r)$-minimizers.

First of all one can see that the nonlocal term turns out to be Lipschitz (see \cite[Lemma 2.6]{AFM} for a proof).

\begin{proposition}[Lipschitzianity of the nonlocal term] \label{prop:lip}
There exists a constant $c_0$, depending only on $N$, such that
if $E,F\subset\Pi^N_k$ are measurable sets, then
\begin{equation*}
|\nl_k(E)-\nl_k(F)| \leq c_0\alpha(E,F)\,.
\end{equation*}
\end{proposition}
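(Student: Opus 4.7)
\medskip
\textbf{Proof plan.}
The plan is to exploit the bilinearity of the nonlocal term, together with a uniform $L^\infty$ bound on the ``potential''
\[
v^E_k(x) := \int_{\Pi^N_k} G_k(x,y)\, u^E_k(y) \dy,
\]
and then to absorb the translation parameter appearing in $\alpha(E,F)$ via the translation invariance of $\nl_k$.

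First, starting from the algebraic identity
\[
u^E_k(x) u^E_k(y) - u^F_k(x) u^F_k(y) = u^E_k(x)\bigl(u^E_k(y) - u^F_k(y)\bigr) + \bigl(u^E_k(x) - u^F_k(x)\bigr) u^F_k(y),
\]
and using the symmetry $G_k(x,y)=G_k(y,x)$, I would rewrite the difference as
\[
\nl_k(E) - \nl_k(F) = \int_{\Pi^N_k} \bigl(v^E_k(x) + v^F_k(x)\bigr)\bigl(u^E_k(x) - u^F_k(x)\bigr) \dx.
\]
The crucial observation is that $u^E_k - u^F_k = 2(\chi_E - \chi_F)$, which is bounded by $2$ and supported on $E \triangle F$.

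The second step is a uniform $L^\infty$ bound of the form $\|v^E_k\|_\infty \leq c(N)$ (independent of $k$ and $E$). Writing $m^E_k := \fint_{\Pi^N_k} u^E_k \dx$, the potential $v^E_k$ solves the Poisson equation $-\Delta v^E_k = u^E_k - m^E_k$ on $\Pi^N_k$ with zero mean; since the right-hand side is uniformly bounded by $2$, standard elliptic estimates yield the claim. The $k$-uniformity is most easily seen through the rescaling identity $G_k(x,y) = k^{N-2} G_1(kx,ky)$, which, after the change of variables $z = kx$, reduces the estimate to the fixed torus $\Pi^N_1$ (with an extra $k^{-2}$ that, since $k \geq 1$, is harmlessly absorbed). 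Combining this with the previous step immediately produces
\[
|\nl_k(E) - \nl_k(F)| \leq 2\bigl(\|v^E_k\|_\infty + \|v^F_k\|_\infty\bigr)|E \triangle F| \leq c_0 |E \triangle F|.
\]

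Finally, since $G_k$ depends only on $x - y$ (the Laplacian on $\Pi^N_k$ being translation invariant), the nonlocal term is itself translation invariant: $\nl_k(x + F) = \nl_k(F)$ for every $x \in \Pi^N_k$. I would then apply the previous bound with $F$ replaced by $x + F$ and minimize over $x$, obtaining
\[
|\nl_k(E) - \nl_k(F)| \leq c_0 \min_{x \in \Pi^N_k} |E \triangle (x + F)| = c_0\, \alpha(E, F).
\]
The main technical point I would expect to spell out carefully is the $k$-uniformity of the bound on $\|v^E_k\|_\infty$; the rest amounts to bookkeeping around the bilinearity and the translation invariance of $\nl_k$.
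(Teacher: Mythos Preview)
Your argument is correct. The paper does not actually give its own proof of this proposition: it simply refers the reader to \cite[Lemma~2.6]{AFM}. Your approach---expressing $\nl_k(E)-\nl_k(F)$ via bilinearity as $\int (v^E_k+v^F_k)(u^E_k-u^F_k)$, bounding $\|v^E_k\|_{L^\infty}$ uniformly through the rescaling $G_k(x,y)=k^{N-2}G_1(kx,ky)$, and then using translation invariance to pass from $|E\triangle F|$ to $\alpha(E,F)$---is exactly the standard route and is what one finds in the cited reference. The only point worth a line of extra care is the $k$-uniformity, which you handle correctly: the rescaling produces a factor $k^{-2}\leq 1$, so the constant indeed depends only on $N$.
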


The following lemma is a refinement of a result already present in \cite{AFM} and \cite{EspFus}. The proof we present here follows the lines of those presented in \cite{Mor2}.

\begin{lemma}\label{lem:qm}
Fix constants $\bar{\gamma}>0$, $\delta_0>0$, $m_0\in(0,|\Pi^N_k|)$ and $M>0$. Let $E\subset\Pi^N_k$, with $\per_k(E)\leq M$, be a solution of 
\begin{equation}\label{eq:min}
\min\Bigl\{ \per_k(F)+\gamma\nl_k(F) \;:\; \fint_{\Pi^k} u_k^F=m\,,\,\alpha(E,F)\leq\delta \Bigr\}\,,
\end{equation}
where $\gamma\leq\bar{\gamma}$, $\delta\in[\delta_0,+\infty]$ and $m\in[-m_0, |\Pi^N_k|-m_0]$.
Then, we can find a constant
$\Lambda_0=\Lambda_0(c_0,m_0,\bar{\gamma},\delta_0,M)>0$ (where $c_0$ is the constant given by Proposition ~\ref{prop:lip}) such that $E$ is a solution of the \emph{unconstrained minimum problem}
$$
\min\Bigl\{ \per_k(F)+\gamma\nl_k(F)+\Lambda\Bigl| \fint_{\Pi^k} u_k^F-m \Bigr| \;:\; \alpha(E,F)\leq\delta/2 \Bigr\}\,,
$$
for all $\Lambda\geq\Lambda_0$.
\end{lemma}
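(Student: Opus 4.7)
\emph{Plan.} I would argue by contradiction, combining an $L^1$-compactness step with a volume-fixing flow generated by a smooth vector field on $\Pi^N_k$. If no universal $\Lambda_0$ existed, a subsequence of putative counterexamples would admit a single vector field capable of adjusting their volumes with a \emph{linear} perimeter cost, and this would contradict the constrained minimality of the $E_n$.

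Concretely, suppose for contradiction that there exist $\Lambda_n\to+\infty$, data $(E_n,\gamma_n,\delta_n,m_n)$ admissible in the statement, and competitors $F_n$ with $\alpha(E_n,F_n)\le\delta_n/2$ and
\begin{equation*}
\per_k(F_n)+\gamma_n\nl_k(F_n)+\Lambda_n\Bigl|\fint_{\Pi^N_k} u_k^{F_n}-m_n\Bigr|<\per_k(E_n)+\gamma_n\nl_k(E_n).
\end{equation*}
Proposition~\ref{prop:lip} with the trivial bound $\alpha\le|\Pi^N_k|$ immediately yields $\per_k(F_n)\le M+\bar\gamma c_0|\Pi^N_k|$ and $|\fint u_k^{F_n}-m_n|\to 0$, hence $|F_n|-|E_n|\to 0$. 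By compactness of bounded-perimeter sets and the volume-fraction constraint forced by $m_n$, I extract along a subsequence $L^1$-limits $E_n\to E_\infty$ and $F_n\to F_\infty$ with $|F_\infty|=|E_\infty|$ bounded away from $0$ and from $|\Pi^N_k|$.

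Since $F_\infty$ has intermediate measure, I pick disjoint open balls $U_\pm\subset\Pi^N_k$ in which $F_\infty$ has density $>3/4$ and $<1/4$ respectively, together with a smooth function $\eta$ supported in $U_+\cup U_-$, nonnegative on $U_+$ and nonpositive on $U_-$, normalized so that $\int_{\Pi^N_k}\eta=0$ and $\int_{F_\infty}\eta=:c>0$. On the closed manifold $\Pi^N_k$ the equation $\div X=\eta$ admits a smooth solution $X$, and its flow $\Phi_t$ satisfies $\tfrac{d}{dt}|\Phi_t(A)|=\int_{\Phi_t(A)}\eta$ for any measurable $A$. By $L^1$-convergence, this quantity is $\ge c/2$ for $A=F_n$, uniformly in $n$ large and in $|t|$ small. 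Hence there exists $t_n$ with $|t_n|\le C_1\bigl||F_n|-|E_n|\bigr|$ such that $\tilde F_n:=\Phi_{t_n}(F_n)$ satisfies $|\tilde F_n|=|E_n|$. Standard change-of-variables bounds for perimeter under smooth diffeomorphisms give
\begin{equation*}
\per_k(\tilde F_n)\le(1+C_2|t_n|)\per_k(F_n)\le\per_k(F_n)+K\bigl||F_n|-|E_n|\bigr|,
\end{equation*}
together with $|\tilde F_n\triangle F_n|\le C_3|t_n|\le C_4\bigl||F_n|-|E_n|\bigr|$, so that $\alpha(E_n,\tilde F_n)\le\alpha(E_n,F_n)+|\tilde F_n\triangle F_n|\le\delta_n$ for $n$ large.

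Applying the constrained minimality of $E_n$ against the admissible competitor $\tilde F_n$, comparing $\nl_k(\tilde F_n)$ with $\nl_k(F_n)$ via Proposition~\ref{prop:lip}, and using the identity $\bigl||F_n|-|E_n|\bigr|=\tfrac{|\Pi^N_k|}{2}|\fint u_k^{F_n}-m_n|$, the hypothesized strict inequality collapses to $\Lambda_n\le K_0(c_0,m_0,\bar\gamma,\delta_0,M)$, which contradicts $\Lambda_n\to+\infty$. The delicate step is the uniform construction of the volume-adjusting field $X$: the compactness argument sidesteps what would otherwise require quantitative control of density radii and of $\int_E\eta$ directly in terms of $M$ and $m_0$, and it is exactly this ingredient that makes the penalization work uniformly over the whole admissible family.
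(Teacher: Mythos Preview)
Your argument is correct and follows the same contradiction-and-compactness skeleton as the paper, but the volume-fixing step is genuinely different. The paper, following Esposito--Fusco, takes the $F_n$ to be \emph{minimizers} of the penalized problem, passes to an $L^1$-limit $F_0$, locates a single ball in which the $F_n$ have uniformly controlled density, and applies an explicit radial dilation $\Phi_n$ supported in that ball to push the volume to the correct value; the perimeter and symmetric-difference bounds then come from direct computation with the radial map. You instead take the $F_n$ to be arbitrary competitors beating $E_n$, pass to a limit $F_\infty$, and solve $\div X=\eta$ globally on the torus for a fixed smooth $\eta$ adapted to $F_\infty$; the needed linear bounds on $\per_k(\tilde F_n)-\per_k(F_n)$ and $|\tilde F_n\triangle F_n|$ follow from the change-of-variables formula for perimeter under a $C^1$-small diffeomorphism together with the BV estimate $\|\chi_{F_n}\circ\Phi_t^{-1}-\chi_{F_n}\|_{L^1}\le C|t|\,\per_k(F_n)$. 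Your route is somewhat cleaner and avoids the explicit Esposito--Fusco computation, at the price of invoking the solvability of $\div X=\eta$ and the smooth-flow machinery; the paper's construction is more hands-on and self-contained. Both exploit compactness in exactly the same way to obtain a single volume-adjusting device that works uniformly along the subsequence, which---as you correctly identify---is the heart of the uniformity in $\Lambda_0$.
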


\begin{proof}
The idea is to prove that we can find a constant $\Lambda_0$ as in the statement of the lemma such that if $\widetilde{F}\in\Pi^k$ solves
$$
\min\Bigl\{ \per_k(F)+\gamma\nl_k(F)+\Lambda\Bigl| \fint_{\Pi^k} u_k^F-m \Bigr| \;:\; \alpha(E,F)\leq\delta/2 \Bigr\}\,,
$$
where $\gamma\leq\bar{\gamma}$ and $\Lambda\geq\Lambda_0$, then $\alpha(\widetilde{F},E)=0$, where $E$ is a solution of \eqref{eq:min}. To prove it, suppose for the sake of contradiction that there exist sequences $\gamma_n\leq\gamma$, $\Lambda_n\rightarrow\infty$, sets $E_n$ solutions of
$$
\min\Bigl\{ \per_k(F)+\gamma_n\nl_k(F) \;:\; \fint_{\Pi^k} u_k^F=m_n\,,\,\alpha(E,F)\leq\delta_n \Bigr\}\,,
$$
where $\delta_n\geq\delta_0$, $m_n:=\fint_{\Pi^k} u_k^{E_n}\in[-m_0,|\Pi^N_k|-m_0]$, $\per_k(E_n)\leq M$, and sets $F_n$ solutions of
$$
\min\Bigl\{ \per_k(F)+\gamma_n\nl_k(F)+\Lambda_n \Bigl| \fint_{\Pi^k} u_k^F-m_n \Bigr| \;:\; \alpha(E_n,F)\leq\delta_n/2 \Bigr\}\,,
$$
with $m_n\neq\int_{\Pi^N_k}u_k^{F_n}$ (suppose $\int_{\Pi^N_k}u_k^{F_n}<m_n$). From now on we will suppose $|F_n\triangle E_n|=\alpha(E_n,F_n)$. The idea is to modify the sets $F_n$'s in such a way that $\int_{\Pi^N_k}u_k^{F_n}=m_n$ (notice that, since we are not working in the whole $\R^N$, we cannot just rescale them to fit the desired volume). This idea has been developed in \cite{EspFus}. Set
$$
\widetilde{\np}_n(F):=\np_k^{\gamma_n}(F)+\Lambda_n \Bigl| \fint_{\Pi^k} u_k^F-m \Bigr|\,.
$$
First of all we notice that $\sup_n\per_k(F_n)<\infty$. Indeed
\begin{align*}
\per_k(F_n)+\Lambda_n \Bigl| \fint_{\Pi^k} u_k^{F_n}-m_n \Bigr|&\leq \widetilde{\np}_n(E_n)-\gamma_n\nl_k(F_n)\\
&=\per_k(E_n)+\gamma_n\bigl( \nl_k(E_n)-\nl_k(F_n) \bigr)\leq M+\bar{\gamma}\delta c_0\,.
\end{align*}
Thus, up to a not relabelled subsequence, it is possible to find a set $F_0\subset\Pi^N_k$ with $\fint_kv_k^{F_0}\in[-m_0,|\Pi^N_k|-m_0]$, such that
$F_n\rightarrow F_0$ in $L^1$. Moreover $\alpha(E_n,F_n)\rightarrow0$. We now sketch the argument presented in \cite{EspFus}. Given $\varepsilon>0$, it is possible to find a radius $r>0$ such that (up to translations)
$$
|F_n\cap B_{r/2}|\leq\varepsilon r^N\,,\quad\quad |F_n\cap B_r|\geq \frac{\omega_Nr^N}{2^{N+2}}\,,
$$
for $n$ sufficiently large. Let $\sigma_n\in(0,1/2^N)$, that will be chosen later, and define
$$
\Phi_n(x):=
\left\{
\begin{array}{lll}
(1-\sigma_n(2^N-1))x & & \text{ if } |x|\leq\frac{r}{2}\,,\\
x+\sigma_n\bigl( 1-\frac{r^N}{|x|^N} \bigr)x & & \text{ if } \frac{r}{2}\leq|x|<r\,,\\
x & & \text{ if } |x|\geq r\,.
\end{array}
\right.
$$
Let $\widetilde{F}_n:=\Phi_n(F_n)$. It is possible to prove that
$$
\per_k(F_n\cap B_r)-\per_k(\widetilde{F}_n\cap B_r)\geq-2^N N\sigma_n\per_k(F_n\cap B_r)\,,
$$
and that, for $\varepsilon>0$ sufficiently small,
$$
\fint_{\Pi^N_k} u_k^{\widetilde{F}_n}-\fint_{\Pi^N_k} u_k^{F_n}\geq
       \sigma_n r^N\Bigl[ c\frac{\omega_N}{2^{N+2}}-\varepsilon\bigl(c+(2^N-1)N \bigr) \Bigr]\geq c\sigma_n r^N\frac{\omega_N}{2^{N+3}}
       =:C_1\sigma_nr^N\,,
$$
where $c$ and $C_1$ are constants depending only on the dimension $N$. Then it is possible to choose the $\sigma_n$'s in such a way that $|F_n|=|E_n|$ for all $n$. In particular we obtain, from the above inequality, that $\sigma_n\rightarrow0$. Finally, it is also possible to prove that
$$
\alpha(\widetilde{F}_n,F_n)\leq C_2\sigma_n\per_k(F_n\cap B_r)\,.
$$
Combining all these estimates we have that
$$
\widetilde{\np}_n(\widetilde{F}_n)\leq \widetilde{\np}_n(F_n)+\sigma_n\bigl[ (2^N N+C_2c_0\bar{\gamma})\per_k(F_n\cap B_r)-\Lambda_nC_1r^N \bigr]< 
             \widetilde{\np}_n(F_n)\leq \widetilde{\np}_n(E_n)\,.
$$
Since $\sigma_n\rightarrow0$, we have that, for $n$ large enough, $\alpha(\widetilde{F}_n,E_n)\leq\delta_n$. Thus the above inequality is in contradiction with the local minimality property of $E_n$.
\end{proof}

\begin{corollary}\label{cor:qm}
Let $E\subset\Pi^N_k$ be a local minimizer of $\np_k^\gamma$. Then it holds that $E$ is an $(\omega,r)$-minimizer of the area functional.
Moreover the parameter $\omega$ depends on the constants $c_0, m_0, \bar{\gamma}, \delta_0$ and $M$ of the previous lemma.
\end{corollary}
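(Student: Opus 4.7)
The plan is to combine the local minimality of $E$ with Lemma~\ref{lem:qm} in order to dispense with the volume constraint, and then to control the resulting penalization by the two Lipschitz-type estimates already available: Proposition~\ref{prop:lip} for the nonlocal term, and the elementary bound $||F|-|E||\leq|E\triangle F|$ for the volume.

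First I would set up the parameters needed to invoke Lemma~\ref{lem:qm}. Let $\delta>0$ be the threshold from the local minimality of $E$. Discarding the trivial cases $|E|\in\{0,|\Pi^N_k|\}$, in which $E$ is automatically an $(\omega,r)$-minimizer of the area functional, I would set $m:=\fint_{\Pi^N_k} u_k^E$, $M:=\per_k(E)$, $m_0:=\min\{|E|,|\Pi^N_k|-|E|\}>0$, $\delta_0:=\delta$, and $\bar\gamma:=\gamma$, so that $E$ itself solves the constrained problem appearing in the statement of Lemma~\ref{lem:qm} with these choices. The lemma then produces a constant $\Lambda_0=\Lambda_0(c_0,m_0,\gamma,\delta_0,M)$ such that
\[
\per_k(E)+\gamma\nl_k(E)\leq \per_k(F)+\gamma\nl_k(F)+\Lambda_0\Bigl|\fint_{\Pi^N_k} u_k^F - m\Bigr|
\]
for every $F\subset\Pi^N_k$ with $\alpha(E,F)\leq\delta/2$.

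Next, I would fix $r_0>0$ small enough that $\omega_N r_0^N\leq\delta/2$, where $\omega_N$ denotes the volume of the unit ball in $\R^N$. For any $r\leq r_0$, any ball $B_r(x)\subset\Pi^N_k$, and any finite perimeter set $F$ with $E\triangle F\subset\subset B_r(x)$, one has $\alpha(E,F)\leq |E\triangle F|\leq \omega_N r^N\leq \delta/2$, so $F$ is admissible in the penalized problem above. Proposition~\ref{prop:lip} yields $\gamma|\nl_k(E)-\nl_k(F)|\leq \gamma c_0|E\triangle F|$, while a direct computation shows that $|\fint u_k^F-\fint u_k^E|=\tfrac{2}{|\Pi^N_k|}||F|-|E||\leq \tfrac{2}{|\Pi^N_k|}|E\triangle F|$. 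Substituting these into the displayed inequality and rearranging gives
\[
\per_k(E)\leq \per_k(F)+\Bigl(\gamma c_0+\frac{2\Lambda_0}{|\Pi^N_k|}\Bigr)|E\triangle F|,
\]
which is exactly the $(\omega,r_0)$-minimality condition with $\omega:=\gamma c_0+2\Lambda_0/|\Pi^N_k|$; the claimed dependence on $c_0,m_0,\bar\gamma,\delta_0,M$ is then inherited from $\Lambda_0$.

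I do not anticipate any serious obstacle, since the substantive work — the passage from a volume-constrained minimizer to an unconstrained penalized one — has already been accomplished in Lemma~\ref{lem:qm}. What remains is essentially a direct substitution combined with the two Lipschitz bounds mentioned above, together with a careful but routine tracking of the parameters in order to justify the stated dependence of $\omega$.
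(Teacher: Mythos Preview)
Your proposal is correct and follows essentially the same approach as the paper: invoke Lemma~\ref{lem:qm} to pass to the unconstrained penalized problem, choose $r$ so that $\omega_N r^N\leq\delta/2$, and absorb the nonlocal term via Proposition~\ref{prop:lip} and the volume penalty via $||F|-|E||\leq|E\triangle F|$. The paper records this in a single line with $\omega:=c_0+\Lambda$, while you track the constants more carefully (the factor $\gamma$ on $c_0$ and $2/|\Pi^N_k|$ on $\Lambda_0$), but the argument is identical.
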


\begin{proof}
From the above result, it follows that local minimizers of $\np_k^\gamma$ are in fact $(\omega,r)$-minimizer, providing we take $\omega:=c_0+\Lambda$ and we choose $r>0$ such that $\omega_N r^N\leq\delta/2$.
\end{proof}

The regularity theory for $(\omega,r)$-minimizers allows us to say something about the regularity of local minimizers of $\np_k^\gamma$ (see \cite[Theorem 1]{Tam2}).

\begin{proposition}
Let $E\subset\Pi^N_k$ be a local minimizer of $\np_k^\gamma$. Then we can write $\partial E=\partial^*E\cup\Sigma$, where the reduced boundary $\partial^* E$ is of class $C^{3,\alpha}$ for all $\alpha\in(0,1)$, and the Hausdorff dimension of $\Sigma$ is less than or equal to $N-8$. 
\end{proposition}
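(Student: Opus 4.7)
The plan is to reduce the statement to the combination of two ingredients already set up in the paper: (i) the $(\omega,r)$-minimality of local minimizers of $\np_k^\gamma$ for the area functional, which has just been established in Corollary~\ref{cor:qm}, and (ii) the classical regularity theory for almost-minimizers of the perimeter due to Tamanini. The bootstrap from $C^{1,\alpha}$ to $C^{3,\alpha}$ is then obtained from the Euler--Lagrange equation using elliptic regularity for the nonlocal potential $v^E$.

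More precisely, Corollary~\ref{cor:qm} tells us that $E$ is an $(\omega,r)$-minimizer of $\per_k$ for some positive constants $\omega,r$. Invoking \cite[Theorem~1]{Tam2} we may write $\partial E = \partial^* E\cup\Sigma$, with $\partial^* E$ of class $C^{1,\alpha}$ for every $\alpha\in(0,1/2)$ and $\mathcal{H}^s(\Sigma)=0$ for every $s>N-8$ (so, in particular, $\dim_{\mathcal{H}}\Sigma\leq N-8$). This already yields the desired singular set bound and the initial regularity of the reduced boundary.

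The remaining task is to upgrade the regularity of $\partial^* E$ from $C^{1,\alpha}$ to $C^{3,\alpha}$ for every $\alpha\in(0,1)$. On the open smooth part $\partial^* E$, standard first-variation arguments (which, in the $\gamma=0$ case, coincide with what is known for area-minimizers; see Theorem~\ref{thm:var} referenced right after the definition of critical sets) give the Euler--Lagrange equation
\[
H_{\partial^* E} = \lambda - 4\gamma v^E \qquad \text{on } \partial^* E,
\]
for a suitable Lagrange multiplier $\lambda\in\R$. Since $u^E\in L^\infty(\Pi^N_k)$ and $-\triangle v^E = u^E - \fint u^E$, elliptic regularity yields $v^E\in W^{2,p}(\Pi^N_k)$ for every $p<\infty$, hence $v^E\in C^{1,\beta}(\Pi^N_k)$ for every $\beta\in(0,1)$. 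Writing $\partial^* E$ locally as a graph of a function $\psi$ over its tangent hyperplane, the prescribed mean curvature equation becomes a quasilinear uniformly elliptic equation for $\psi$ whose right-hand side $\lambda - 4\gamma\, v^E$ is $C^{1,\beta}$. A standard Schauder bootstrap (first bring $\psi$ from $C^{1,\alpha}$ to $C^{2,\alpha}$, then use the improved smoothness of the coefficients and of the RHS restricted to $\partial^* E$ to reach $C^{3,\alpha}$) gives that $\partial^* E$ is of class $C^{3,\alpha}$ for every $\alpha\in(0,1)$.

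The only mildly delicate point is keeping track of how much regularity of the restriction $v^E|_{\partial^* E}$ can be used in the Schauder step: because $u^E$ is just $L^\infty$, one cannot expect $v^E$ to be globally more than $C^{1,\beta}$ for $\beta<1$, so the bootstrap naturally halts at $C^{3,\alpha}$ (which is exactly what is claimed). Everything else is standard and merely requires citing Tamanini's regularity theorem and the Schauder estimates for quasilinear elliptic equations.
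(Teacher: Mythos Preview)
Your proposal is correct and follows the same route as the paper: invoke Corollary~\ref{cor:qm} to obtain $(\omega,r)$-minimality, cite Tamanini's regularity theorem for the $C^{1,\alpha}$ structure of $\partial^*E$ and the dimension bound on $\Sigma$, and then bootstrap via the Euler--Lagrange equation and the $C^{1,\beta}$ regularity of $v^E$. The paper itself simply refers to \cite[Theorem~1]{Tam2} without spelling out the Schauder step, so you have in fact supplied more detail than the original.
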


\begin{remark}
Using the equation satisfied by a critical set $E$, it is also possible to prove (see \cite{JulPis}) the $C^{\infty}$ regularity of $\partial^* E$, in every dimension $N$. In particular, in dimension $N\leq 7$, we obtain the $C^{\infty}$-regularity for the entire boundary $\partial E$.
\end{remark}

In the remaining part of this section we would like to investigate some properties of the nonlocal term, as well as the relation between the functionals $\np$ and $\np_k$. In particular, we would like to establish the relation between the followings
\[
\np^\gamma(E)\,,\quad\quad\np^\gamma_k(E^k)\,,\quad\quad\np^\gamma(E^k)\,,
\]
where in the last, $E^k$ is seen as a subset of $\Pi^N$, \emph{i.e.}, as $k^N$ copies of $E/k$.
The relation between the perimeter terms is clear, since we have a scaling for it.
The behavior of the three nonlocal terms above can be well understood once we introduce the following function

\begin{definition}
For a set $E\subset\Pi^N_k$, we define, for $x\in\Pi^N_k$, the function
$$
v_k^E(x):=\int_{\Pi^N_k}G_k(x,y)u_k^E(y)\dy\,.
$$
For simplicity, we wil denote the function $v^E_1$ by $v^E$.
\end{definition}

\begin{remark}\label{rem:nonloc}
Notice that, for $E\in\Pi^N_k$, $v_k^E$ is the unique solution to
\begin{equation}\label{eq:vE}
-\triangle v_k^E=u_k^E-m^E\quad\text{ in } \Pi^N_k\,,\quad\quad\quad\int_{\Pi^N_k}v_k^E\dx=0\,,
\end{equation}
where we recall that $m^E:=\int_{\Pi^N}u^E\dx=\fint_{\Pi^N_k} u_k^{E^k}\dx$. Moreover, one can see that $v_k^E$ is $1/k$-periodic.
Thus, it is possible to rewrite the nonlocal term in the following way:
$$
\nl_k(E)=\int_{\Pi^N_k}u_k^E v_k^E\dx=-\int_{\Pi^N_k}v_k^E\triangle v_k^E\dx=\int_{\Pi^N_k}|\nabla v_k^E|^2\dx\,.
$$
In particular, from the above writing, we see that the nonlocal term prefers highly oscillating functions $u^E_k$, as has been pointed out in the introduction.

By standard elliptic regularity we know that $v_k^E\in W^{2,p}(\Pi^N_k)$ for all $p\in[1,+\infty)$.
In particular, it holds that
$$
\|v_k^E\|_{W^{2,p}(\Pi^N_k)}\leq C\,,
$$
where $p>1$ and $C>0$ is a constant depending only on $\Pi^N_k$.
\end{remark}

We can now prove the relation between the three objects above.

\begin{lemma}\label{lem:Ek}
Let $E\subset\Pi^N$. Then it holds
\begin{equation}\label{eq:per}
\np^\gamma(E^k) = k^N \np_k^\gamma(E^k)\,,
\end{equation}
and
\begin{equation}\label{eq:pfk}
\np_k^\gamma(E^k)=k^{1-N}\Bigl[\per_{\Pi^N}(E)+\gamma k^{-3}\nl_{\Pi^N}(E)\Bigr]\,.
\end{equation}
\end{lemma}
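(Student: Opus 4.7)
The plan is to prove the two identities separately. Identity \eqref{eq:per} reflects the fact that $E^k$, when viewed as a subset of $\Pi^N$, is $1/k$-periodic, so both terms of $\np^\gamma$ pick up a factor of $k^N$ equal to the number of periodicity cells. Identity \eqref{eq:pfk} is a scaling identity obtained by pulling back via $x\mapsto kx$ from $\Pi^N_k$ to $\Pi^N$.

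For the first identity, the perimeter part is immediate: $\Pi^N$ is tiled by $k^N$ congruent copies of $\Pi^N_k$, on each of which $E^k$ looks like a translate of itself, so $\per_{\Pi^N}(E^k)=k^N\per_k(E^k)$. For the nonlocal part I would use the Dirichlet representation $\nl_k(F)=\int_{\Pi^N_k}|\nabla v_k^F|^2\dx$ from Remark \ref{rem:nonloc}. Since $u^{E^k}-m^{E^k}$ is $1/k$-periodic on $\Pi^N$, uniqueness of the zero-mean solution of the Poisson equation implies that $v^{E^k}$ is itself $1/k$-periodic and coincides with the $1/k$-periodic extension of $v_k^{E^k}$. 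Then $u^{E^k}v^{E^k}$ is $1/k$-periodic, and integrating over $\Pi^N$ produces the factor $k^N$, yielding $\nl(E^k)=k^N\nl_k(E^k)$.

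For the second identity, the perimeter scales as $k^{1-N}$ under the rescaling, so $\per_k(E^k)=k^{1-N}\per_{\Pi^N}(E)$. For the nonlocal part I would propose the ansatz $v_k^{E^k}(x)=k^{-2}v^E(kx)$ on $\Pi^N_k$; a short computation shows that the right-hand side satisfies $-\triangle w=u_k^{E^k}-m^E$ with zero mean, so it coincides with $v_k^{E^k}$ by uniqueness. A change of variables $y=kx$ in $\int_{\Pi^N_k}|\nabla v_k^{E^k}|^2\dx$ then produces the factor $k^{-N-2}$, and combining the perimeter and nonlocal contributions yields \eqref{eq:pfk}.

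There is no genuine obstacle here; the main point requiring care is tracking the different normalizations of the Poisson equation on $\Pi^N$ versus on $\Pi^N_k$ (whose right-hand sides differ by the factor $|\Pi^N_k|^{-1}=k^N$) and keeping the correct power of $k$ at each step. Exploiting uniqueness of the zero-mean Poisson solution in both arguments lets one sidestep any direct manipulation of the Green's functions $G$ and $G_k$.
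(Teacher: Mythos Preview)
Your proposal is correct and follows essentially the same approach as the paper: both arguments hinge on the ansatz $v_k^{E^k}(x)=k^{-2}v^E(kx)$, verified via uniqueness of the zero-mean Poisson solution, together with the $1/k$-periodicity of $v^{E^k}$ and the Dirichlet representation of $\nl_k$. The only cosmetic difference is that the paper proves \eqref{eq:pfk} first and then deduces \eqref{eq:per} in one line from periodicity, whereas you present them in the opposite order.
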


\begin{proof}
We first prove \eqref{eq:pfk}. We claim that, for a set $E\subset\Pi^N$, we have
$$
v_k^{E^k}(x)=k^{-2}v^E(kx)\,.
$$
Indeed, noticing that $\fint_{\Pi^N_k}u_k^{E_k}=\fint_{\Pi^N}u^{E}$, it holds
$$
-\triangle\bigl(k^{-2}v^E(kx)\bigr) = -\triangle v^E(kx) = u^E(kx)-m = u_k^{E^k}(x)-m\,,
$$
and
$$
\int_{\Pi^N_k}k^{-2}v^E(kx)\dx = k^{-N-2}\int_{\Pi^N}v^E(y)dy=0\,.
$$
By uniqueness of the solution of problem \eqref{eq:vE}, we obtain our claim.
Finally, we can conclude by noticing that
$$
\int_{\Pi^N_k}|\nabla v_k^{E^k}(x)|^2\dx=k^{-2-N}\int_{\Pi^N}|\nabla v^E(x)|^2\dx\,.
$$

To prove \eqref{eq:per}, we just notice that $v^{E^k}$ is $1/k$-periodic.
\end{proof}

\begin{remark}
We would like to stress the meaning of the above result: given a set $E\subset\Pi^N$, equation \eqref{eq:per} tells us that
the energy of $E^k$ in $\Pi^N$ is just the sum of the energies of each of its \emph{pieces} in each $\Pi^N_k$,
each of which is given by formula \eqref{eq:pfk}. In particular, we have that $v^{E^k}$ is a $1/k$-periodic function that, in each periodicity cell, is equal to $v^E$ opportunely rescaled.
\end{remark}


\subsection{Results about $\Gamma$-convergence}

In this section we would like to recall an approximation theorem for isolated local minimizers of the area functional.
For, we need to write the functional $\np_{\Pi^N}^\gamma$ in the language of $\Gamma$-convergence.

\begin{definition}\label{def:gamma}
Let $(X,\mathrm{d})$ be a metric space, and let $F,\, F_n: X\rightarrow\R\cup\{+\infty\}$. We say that the sequence $F_n$ \emph{$\Gamma(\mathrm{d})$-converges} to the functional $F$ if the following two conditions are satisfied
\begin{itemize}
\item for every $x_n\stackrel{\mathrm{d}}{\rightarrow}x$, $F(x)\leq\liminf_{n}F_n(x_n)$,
\item for every $\bar{x}\in X$ there exists $x_n\stackrel{\mathrm{d}}{\rightarrow}\bar{x}$ such that $F(x)\geq\limsup_n F_n(x_n)$.
\end{itemize}
In this case we will write $F_n\stackrel{\Gamma(d)}{\rightarrow} F$.
\end{definition}

\begin{definition}
Consider the quotient space space $X:=L^1(\Pi^N)/\sim$, where the equivalence relation $\sim$ is defined as follows:
$f_1\sim f_2$ if and only if there exists $v\in\Pi^N$ such that $f_1(x+v)=f_2(x)$, for each $x\in\Pi^N$.
Endow this space with the distance
$$
\alpha(u,v):=\min_{x\in\Pi^N}\|u-v(\cdot-x)\|_{L^1(\Pi^N)}\,.
$$
Fix $\gamma\in[0,+\infty)$ and $m\in(-1,1)$ and define the functional $\widetilde{\np}_\gamma:X\rightarrow\R\cup\{+\infty\}$ as
$$
\widetilde{\np}^\gamma(u):=
\left\{
\begin{array}{ll}
\np^\gamma(E) & \text{ if } u=u^E, \text{ for some set } E \text{ with } \fint_{\Pi^N}u^E\dx=m\,,\\
+\infty & \text{ otherwise}\,.
\end{array}
\right.
$$
\end{definition}

\begin{remark}\label{rem:funct}
Notice that the functionals $\widetilde{\np}^\gamma$ turn out to be equi-coercive and lower semicontinuous. Moreover $\widetilde{\np}^\gamma\stackrel{\Gamma(\alpha)}{\longrightarrow}\widetilde{\np}^0$ as $\gamma\rightarrow0^+$.
\end{remark}

Although the $\Gamma$-convergence has been designed for the convergence of \emph{global} mininimizers, one can say also something about convergence of \emph{local} minimizers. The following result is a particular application of \cite{KohSter}.

\begin{theorem}\label{thm:KS}
Let $E\subset\Pi^N$ be a smooth isolated local minimizer of $\np^{\bar{\gamma}}$, for some $\bar{\gamma}\geq0$.
Then there exists a sequence $(E_\gamma)_{\gamma>0}$, with $|E_\gamma|=|E|$, such that $E_\gamma$ is a local minimizer of $\np^\gamma$ in $\Pi^N$ and
$\alpha(E_\gamma, E)\rightarrow0$ as $\gamma\rightarrow\bar{\gamma}$.
\end{theorem}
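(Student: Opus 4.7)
The plan is to follow the classical Kohn--Sternberg strategy adapted to our setting: look at the $\Gamma$-convergence $\widetilde{\np}^\gamma \to \widetilde{\np}^0$ on the metric space $(X,\alpha)$ stated in Remark~\ref{rem:funct}, and extract local minimizers of $\np^\gamma$ from a sequence of minimizers on a closed ball around $E$.

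More precisely, let $\delta>0$ be a radius provided by the hypothesis that $E$ is an isolated local minimizer of the area functional, so that $\per_{\Pi^N}(E)<\per_{\Pi^N}(F)$ whenever $|F|=|E|$, $0<\alpha(E,F)\leq \delta$. Fix $m:=\fint_{\Pi^N}u^E$ and consider, for every $\gamma>0$, the constrained minimization problem
\[
\min\bigl\{\widetilde{\np}^\gamma(u) \,:\, u\in X,\ \alpha(u,u^E)\leq \delta\bigr\}.
\]
The closed ball $\overline{B}_\delta(u^E)\subset X$ is sequentially compact because $\widetilde{\np}^\gamma$ is equi-coercive (the perimeter is uniformly bounded on the ball, using also Proposition~\ref{prop:lip} to control the nonlocal term), and $\widetilde{\np}^\gamma$ is lower semicontinuous. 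So the direct method yields a minimizer $u^{E_\gamma}$ (which must be of the form $u=u^F$ because $\widetilde{\np}^\gamma$ is finite at $u^E$), with $|E_\gamma|=|E|$ automatically from the volume constraint built into $\widetilde{\np}^\gamma$.

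Next, I would show that $\alpha(E_\gamma,E)\to 0$ as $\gamma\to 0^+$. By Proposition~\ref{prop:lip}, $|\nl_{\Pi^N}(F)|$ is uniformly bounded on the ball, so
\[
\np^\gamma(E_\gamma)\leq \np^\gamma(E)=\per_{\Pi^N}(E)+\gamma\,\nl_{\Pi^N}(E)\longrightarrow \per_{\Pi^N}(E).
\]
Any $\alpha$-limit point $F_0$ of $(E_\gamma)_\gamma$ satisfies $\alpha(F_0,E)\leq \delta$, $|F_0|=|E|$, and by the $\Gamma$-liminf inequality of Remark~\ref{rem:funct},
\[
\per_{\Pi^N}(F_0)=\widetilde{\np}^0(u^{F_0})\leq \liminf_{\gamma\to 0^+}\widetilde{\np}^\gamma(u^{E_\gamma})\leq \per_{\Pi^N}(E).
\]
The isolated minimality of $E$ then forces $\alpha(F_0,E)=0$, so the whole family $E_\gamma$ converges to $E$ in $(X,\alpha)$.

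Finally, the local minimality of $E_\gamma$ for $\np^\gamma$ follows because $\alpha(E_\gamma,E)\to 0$ implies that eventually $E_\gamma$ lies in the \emph{interior} of the constraint ball: choosing $\eta_\gamma:=\delta-\alpha(E_\gamma,E)>0$, any competitor $F$ with $|F|=|E_\gamma|$ and $\alpha(E_\gamma,F)<\eta_\gamma$ satisfies $\alpha(E,F)<\delta$ by the triangle inequality and is therefore admissible, yielding $\np^\gamma(E_\gamma)\leq \np^\gamma(F)$. The main (and really only) obstacle in this argument is making sure that the minimizer does not sit on the boundary $\alpha(\cdot,u^E)=\delta$ of the constraint set; this is exactly where the \emph{isolated} character of the local minimality of $E$ is essential, because without it one could only conclude $F_0$ is some other area-minimizing competitor possibly at distance $\delta$ from $E$, and the argument would break down.
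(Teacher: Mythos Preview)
Your argument is correct and is precisely the Kohn--Sternberg strategy: minimize $\widetilde{\np}^\gamma$ on a closed $\alpha$-ball around $u^E$, use the $\Gamma$-convergence of Remark~\ref{rem:funct} together with the isolated minimality of $E$ to force the constrained minimizers to converge to $E$, and conclude that they eventually lie in the interior of the ball and are therefore unconstrained local minimizers. The paper does not write out its own proof of this theorem but simply states that it is a particular application of the Kohn--Sternberg result, so your argument is exactly the intended one spelled out in detail.
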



\section{Variations and local minimality}

In the following we will use a local minimality criterion provided in \cite{AFM}, that we recall here for reader's convenience. This criterion is based on the positivity of the second variation.
Thus, we need to introduce what we mean by variation.

\begin{definition}\label{def:adm}
Let $E\subset\Pi^N$ be a set of class $C^2$. Take a smooth vector field $X\in C^\infty(\Pi^N;\R^N)$ and consider the associated flow
$\Phi:\Pi^N\times(-1,1)\rightarrow\Pi^N$ given by
$$
\frac{\partial\Phi}{\partial t}=X(\Phi)\,,
$$
such that $\Phi(x,0)=x$ for all $x\in\Pi^N$. Let $E_t:=\Phi(E,t)$ and suppose $|E_t|=|E|$ for each time $t$.
We say that $(\Phi(\cdot,t))_t$ is an \emph{admissible family of diffeomorphisms} for $E$.
We define the \emph{first} and the \emph{second variation} of $\np^\gamma$ at a set $E$ with respect to the flow $\Phi$, respectively as
$$
{\frac{\mathrm{d}}{\mathrm{d}t}\np^\gamma(E_t)}_{|_{t=0}}\,,\quad\quad\quad\quad{\frac{\mathrm{d^2}}{\mathrm{d}t^2}\np^\gamma(E_t)}_{|_{t=0}}\,.
$$
\end{definition}

We recall here the result present in \cite[Theorem~3.1]{AFM} for the computation of the first and the second variation.

\begin{theorem}\label{thm:var}
Let $E$, $X$ and $\Phi$ as above. 
Then the first variation of $\np^\gamma$ computed at $E$ with respect to the flow $\Phi$ is given by
\begin{equation} \label{eq:pfirvar}
{\frac{\mathrm{d}}{\mathrm{d}t}\np^\gamma(E_t)}_{|_{t=0}} = \int_{\partial E} (H_{\partial E} + 4\gamma v^E)(X\cdot\nu_{E}) \,\dh\,,
\end{equation}
while the second variation of $\np^\gamma$ at $E$ with respect to the flow $\Phi$ reads as
\begin{align*}
&{\frac{\mathrm{d^2}}{\mathrm{d}t^2} \np^\gamma(E_t)}_{|_{t=0}}
= \int_{\partial E} \left( |D_{\tau}( X\cdot\nu_{E})|^2-|B_{\partial E}|^2(X\cdot\nu_{E})^2 \right) \,\dh \\
&\hspace{0.3 cm} + 8\gamma \int_{\partial E}\int_{\partial E}G_{\Pi^N}(x,y)(X(x)\cdot\nu_{E}(x))(X(y)\cdot\nu_{E}(y)) \,\dh(x)\dh(y)  \\
&\hspace{0.3 cm} +4\gamma\int_{\partial E} \partial_{\nu_{E}}v^E\,(X\cdot\nu_{E})^2\,\dh
-\int_{\partial E} (H_{\partial E}+4\gamma v^E)\,\div_{\tau}\bigl(X_{\tau}(X\cdot\nu_{E})\bigr) \,\dh\,. \\
\end{align*}
\end{theorem}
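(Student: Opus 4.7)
My plan is to split $\np^\gamma=\per+\gamma\nl$ and compute the first and second variations of each piece separately, writing everything in terms of $\varphi:=X\cdot\nu_E$ on $\partial E$. The volume constraint $|E_t|=|E|$ translates at first order to $\int_{\partial E}\varphi\dh=0$ and will be used to discard lower-order corrections when needed; however, since the formulae are stated for arbitrary volume-preserving flows and are quadratic forms in the normal component, I will work directly with the full flow and only invoke the constraint where it simplifies bookkeeping.

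For the first variation, the perimeter contribution is classical: by the area formula $\per_{\Pi^N}(E_t)=\int_{\partial E}J_\tau\Phi_t\dh$, and differentiating at $t=0$ gives $\int_{\partial E}\div_\tau X\dh$; integrating by parts on the closed hypersurface yields $\int_{\partial E}H_{\partial E}\varphi\dh$. For the nonlocal piece I write $\nl(E_t)=\int_{\Pi^N}u^{E_t}v^{E_t}\dx$ and exploit the symmetry of $G_{\Pi^N}$ to get $\frac{d}{dt}\nl(E_t)=2\int_{\Pi^N}v^{E_t}\dot u^{E_t}\dx$; since $u^{E_t}=2\chi_{E_t}-1$, a standard transport computation (or coarea) gives $\dot u^{E_t}|_{t=0}=2\varphi\mathcal{H}^{N-1}\measurerestr\partial E$, producing the $4\int_{\partial E}v^E\varphi\dh$ term and hence \eqref{eq:pfirvar}.

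For the second variation I differentiate the first variation a second time, which is where the work lies. The perimeter term yields the familiar Jacobi-field expression $\int_{\partial E}(|D_\tau\varphi|^2-|B_{\partial E}|^2\varphi^2)\dh$ plus boundary-transport corrections of the form $-\int_{\partial E}H_{\partial E}\div_\tau(X_\tau\varphi)\dh$, obtainable via a second-order Taylor expansion of $J_\tau\Phi_t$ together with the Gauss--Codazzi identity. For the nonlocal term I differentiate $\int_{\partial E_t}v^{E_t}(X\cdot\nu_{E_t})\dh$ and identify three distinct contributions: (i) the shape derivative $\dot v^E=G_{\Pi^N}*\dot u^E$ which, after pairing with the concentrated distribution $2\varphi\mathcal{H}^{N-1}\measurerestr\partial E$, gives the double boundary integral $8\gamma\iint G_{\Pi^N}\varphi(x)\varphi(y)\dh(x)\dh(y)$; (ii) the normal-transport term, using $v^E\in W^{2,p}$ and computing $\frac{d}{dt}v^{E_t}(\Phi_t(x))=\dot v^E+\nabla v^E\cdot X$ on $\partial E$, which gives $4\gamma\int_{\partial E}\partial_{\nu_E}v^E\varphi^2\dh$ (the tangential piece $\nabla_\tau v^E\cdot X_\tau$ combines with the measure/normal correction and is absorbed into the last term); (iii) the remaining $-4\gamma\int_{\partial E}v^E\div_\tau(X_\tau\varphi)\dh$, which merges with the perimeter's analogous $H$-term to produce the final $-\int_{\partial E}(H_{\partial E}+4\gamma v^E)\div_\tau(X_\tau\varphi)\dh$.

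The main obstacle is the careful bookkeeping of the non-normal components of $X$ in the second-order expansion, since both the surface measure, the outer normal $\nu_{E_t}$, and the point of evaluation on $\partial E_t$ change with $t$ in a coupled way. The cleanest way I would organize this is to parametrize $\partial E_t$ as $\Phi_t(\partial E)$, pull everything back to the reference boundary $\partial E$, Taylor-expand the pullback of the tangential Jacobian and of $\nu_{E_t}\circ\Phi_t$ to second order in $t$, and then collect terms by cancelling total tangential divergences against each other using the divergence theorem on $\partial E$. The appearance of the factor $H_{\partial E}+4\gamma v^E$ in front of $\div_\tau(X_\tau\varphi)$ is exactly the quantity whose vanishing characterizes critical sets, which explains why on a critical configuration the second variation reduces to a manifestly geometric/nonlocal quadratic form in $\varphi$ alone.
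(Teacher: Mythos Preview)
The paper does not prove this theorem at all: it is stated as a recollection of \cite[Theorem~3.1]{AFM}, with no argument given. Your sketch follows the standard route (which is essentially the one carried out in \cite{AFM} and, at a formal level, in \cite{ChoSte}): split into perimeter and nonlocal parts, pull back to $\partial E$ via $\Phi_t$, Taylor-expand the tangential Jacobian and $\nu_{E_t}\circ\Phi_t$, and collect terms using the tangential divergence theorem. So there is nothing to compare against in this paper; your outline is correct and matches the approach of the cited reference.
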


\begin{remark}
Notice that the last term of the second variation vanishes whenever $E$ is a critical set.
\end{remark}

We now follow the ideas contatined in \cite{AFM}. We introduce the space
$$
\widetilde{H}^1(\partial E):=\left\{\varphi\in H^1(\partial E) \::\: \int_{\partial E}\varphi\,\dh=0 \right\}\,,
$$
endowed with the norm $\|\varphi\|_{\widetilde{H}^1(\partial E)} := \|\nabla\varphi\|_{L^2(\partial E)}$.
On such a space we define the following quadratic form associated with the second variation.

\begin{definition} \label{def:fquad1}
Let $E\subset\Pi^N$ be a regular critical set.
We define the quadratic form $\partial^2\np^\gamma(E):\widetilde{H}^1(\partial E)\rightarrow\R$ by
\begin{equation} \label{eq:pfquad}
\begin{split}
\partial^2\np^\gamma(E)[\varphi]
& :=\int_{\partial E} \bigl( |D_{\tau}\varphi|^2-|B_{\partial E}|^2\varphi^2 \bigl) \,\dh
    +4\gamma\int_{\partial E} (\partial_{\nu_{E}}v^E) \varphi^2\,\dh \\
& \quad +8\gamma\int_{\partial E}\int_{\partial E}G_{\Pi^N}(x,y) \varphi(x) \varphi(y)\,\dh(x)\dh(y)\\
& =: \partial^2\per_{\Pi^N}(E)[\varphi]+\gamma\partial^2\nl_{\Pi^N}(E)[\varphi]\,,
\end{split}
\end{equation}
where $\partial^2\per_{\Pi^N}(E)$ denotes the first integral, while $\gamma\partial^2\nl_{\Pi^N}(E)$ the other two.
\end{definition}

Since our functional is translation invariant, if we compute the second variation of $\np^\gamma$ at a regular set $E$ with respect to a flow of the form $\Phi(x,t):=x+t\eta e_i$, where $\eta\in\R$ and $e_i$ is an element of the canonical basis of $\R^N$, setting $\nu_i:=\langle\nu_E,e_i\rangle$ we obtain that
$$
\partial^2 \np^\gamma(E)[\eta\nu_i]= {\frac{\mathrm{d}^2}{\mathrm{d} t^2}\np^\gamma(E_t)}_{|_{t=0}} = 0\,.
$$
Hence we need to avoid degenerate directions. Write
$$
\widetilde{H}^1(\partial E)=T^{\bot}(\partial E)\oplus T(\partial E)\,,
$$
where $T^{\bot}(\partial E)$ is the orthogonal complement to $T(\partial E)$ in the $L^2$-sense, \emph{i.e.},
\begin{equation}\label{eq:tbot}
T^{\bot}(\partial E):=\left\{  \varphi\in\widetilde{H}^1(\partial E) \,:\, \int_{\partial E}\varphi\nu_i\,\dh=0\;\;\mbox{for each}\;i=1,\dots,N \right\}\,.
\end{equation}
It can be shown (see \cite[Equation~(3.7)]{AFM}) that there exists an orthonormal frame $(\varepsilon_1,\dots,\varepsilon_N)$ such that
\begin{equation}\label{eq:diag}
\int_{\partial E}(\nu\cdot\varepsilon_i)(\nu\cdot\varepsilon_j)\,\dh=0\qquad\mbox{for all }i\neq j\,.
\end{equation}

\begin{definition} \label{def:varpos1}
We say that $\np^\gamma$ has \emph{strictly positive second variation} at the regular critical set $E$ if
$$
\partial^2\np^\gamma(E)[\varphi]>0\qquad\text{for all }\varphi\in T^\bot(\partial E)\setminus\{0\}.
$$
\end{definition}

We are now in position to recall the local minimality result of Acerbi, Fusco and Morini (see \cite[Theorem 1.1]{AFM}).

\begin{theorem}\label{thm:locmin}
Let $E\subset\Pi^N$ be a regular critical set such that $\np^\gamma$ has strictly positive second variation at $E$. Then there exist constants $C,\delta>0$, such that
$$
\np^\gamma(F)\geq\np^\gamma(E)+C\bigl(\alpha(E,F)\bigr)^2\,,
$$
whenever $F\subset\Pi^N$ with $|F|=|E|$ is such that $\alpha(E,F)\leq\delta$.
\end{theorem}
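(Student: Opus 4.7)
My strategy is to prove the inequality by contradiction in two stages: first, establish a quantitative local minimality bound against sets $F$ whose boundary is a small $C^1$-normal graph over $\partial E$; then upgrade this to the full $\alpha$-distance estimate by means of quasi-minimality and the improved convergence Theorem~\ref{thm:W}.

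\emph{$C^1$-local minimality.} Consider first $F$ with $\partial F=\{x+\psi(x)\nu_E(x):x\in\partial E\}$, with $\|\psi\|_{C^1(\partial E)}$ small and $|F|=|E|$. Since $\np^\gamma$ is translation invariant, I first replace $F$ by a small translate $F-x_0$ so that the new graph function $\tilde\psi$ belongs to $T^\bot(\partial E)$; the existence of such an $x_0=x_0(\psi)$ follows from the implicit function theorem applied to the map
$$
x_0\mapsto\biggl(\int_{\partial E}\tilde\psi(\cdot;x_0)\,(\nu_E\cdot\varepsilon_i)\,\dh\biggr)_{i=1,\dots,N},
$$
whose Jacobian at $x_0=0$ is the diagonal, non-degenerate matrix given by \eqref{eq:diag}. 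The volume constraint forces $\int_{\partial E}\tilde\psi\,\dh=O(\|\tilde\psi\|_{L^2}^2)$, which up to a small correction places $\tilde\psi$ in $\widetilde H^1(\partial E)\cap T^\bot(\partial E)$. Interpolating $E$ to $F-x_0$ via the flow $\Phi_t$ of a suitable extension of $\tilde\psi\,\nu_E$, I Taylor-expand to second order to obtain
$$
\np^\gamma(F)-\np^\gamma(E)=\int_0^1(1-t)\,\partial^2\np^\gamma(E_t)[X_t]\,dt,
$$
where the first-order term vanishes by criticality of $E$ (cf.\ \eqref{eq:pfirvar}). The key analytical input is then a spectral argument: the strict positivity hypothesis together with a Poincar\'e-type inequality on $T^\bot$ gives $\partial^2\np^\gamma(E)[\varphi]\geq c\|\varphi\|_{\widetilde H^1}^2$ for all $\varphi\in T^\bot$, and a continuity-in-$E_t$ argument transfers the same bound (with slightly worse constant) to $\partial^2\np^\gamma(E_t)$ once $\|\psi\|_{C^1}$ is sufficiently small. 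Since $\alpha(E,F)\leq\|\tilde\psi\|_{L^1(\partial E)}+o(\|\tilde\psi\|_{L^1})\leq C\|\tilde\psi\|_{\widetilde H^1}$, this yields
$$
\np^\gamma(F)-\np^\gamma(E)\geq C\alpha(E,F)^2.
$$

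\emph{Upgrade to $\alpha$-distance.} Suppose, by contradiction, that there exist sets $F_n$ with $|F_n|=|E|$, $\varepsilon_n:=\alpha(E,F_n)\to 0$, and $\np^\gamma(F_n)<\np^\gamma(E)+\varepsilon_n^2/n$. I would replace $F_n$ by a minimizer $\tilde F_n$ of an auxiliary penalized problem of the form
$$
\min\Bigl\{\np^\gamma(F)+\Lambda\bigl||F\triangle E|-\varepsilon_n\bigr|\,:\,\alpha(E,F)\leq\delta\Bigr\},
$$
coupled with the volume penalization of Lemma~\ref{lem:qm} so that the problem becomes truly unconstrained. For $\Lambda$ large and independent of $n$, this selection still satisfies $\alpha(E,\tilde F_n)\to 0$ and $\np^\gamma(\tilde F_n)\leq\np^\gamma(F_n)$. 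Proposition~\ref{prop:lip} together with the argument of Corollary~\ref{cor:qm} then implies that the $\tilde F_n$ are $(\omega,r_0)$-minimizers of the area functional with $\omega$ uniform in $n$. Theorem~\ref{thm:W} gives $\partial\tilde F_n=\{x+\psi_n\,\nu_E(x)\}$ with $\psi_n\to 0$ in $C^{1,\beta}$, so the previous stage applies to $\tilde F_n$ and produces
$$
\np^\gamma(\tilde F_n)-\np^\gamma(E)\geq C\,\alpha(E,\tilde F_n)^2,
$$
which, combined with the construction of $\tilde F_n$ and the counterexample assumption on $F_n$, yields a contradiction for $n$ large.

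\emph{Main obstacle.} The technical heart is the $C^1$-local minimality stage: one must propagate the strict positivity of $\partial^2\np^\gamma(E)$ on $T^\bot$ to the shifted forms $\partial^2\np^\gamma(E_t)$ along the interpolating flow, while simultaneously handling the translation parameter $x_0$ produced by the implicit function theorem (and keeping the volume and $T^\bot$ normalizations compatible). The $L^1$-to-$C^1$ upgrade is conceptually simpler but hinges on the penalization trick of Lemma~\ref{lem:qm}, which is what allows the two side constraints (volume and $\alpha$-distance to $E$) to be absorbed into an unconstrained variational problem, thereby enabling the $(\omega,r_0)$-regularity theory to deliver the needed $C^{1,\beta}$-convergence.
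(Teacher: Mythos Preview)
The paper does not give its own proof of this statement; it is recalled from \cite[Theorem~1.1]{AFM}. Your two-stage plan is exactly the AFM strategy (and is re-implemented, in a uniform version, in Section~4.2 of this paper), so the outline is correct. There is, however, a genuine gap in the topology you use.

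In Stage~1 you work with competitors that are $C^1$-graphs over $\partial E$ and claim that a ``continuity-in-$E_t$'' argument propagates the positivity of $\partial^2\np^\gamma$ along the interpolating flow. This fails: the quadratic form \eqref{eq:pfquad} contains $\int_{\partial E_t}|B_{\partial E_t}|^2\varphi^2\,\dh$, and the full second-variation formula in Theorem~\ref{thm:var} carries a remainder involving $H_{\partial E_t}$; both depend on second derivatives of the graph function, which $C^1$-smallness of $\psi$ does not control. The correct topology is $W^{2,p}$ with $p>\max\{2,N-1\}$ (cf.\ Lemma~\ref{lem:minw2p} and \cite[Theorem~3.9]{AFM}). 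This has a knock-on effect in Stage~2: Theorem~\ref{thm:W} delivers only $C^{1,\beta}$-convergence of the penalized minimizers $\tilde F_n$, so before invoking Stage~1 you must bootstrap to $W^{2,p}$ via the Euler--Lagrange equation they satisfy---which is also why the penalty should be smoothed, as the paper does with $\Lambda\sqrt{(\alpha(F,E)-\varepsilon_n)^2+\varepsilon_n}$. Finally, you skip the intermediate $L^\infty$-local-minimality step (\cite[Theorem~4.3]{AFM}; see also the lemma immediately preceding the proof of Proposition~\ref{prop:unif} here), which is what pins down $\alpha(E,\tilde F_n)/\varepsilon_n\to1$ and makes the final contradiction close.

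A minor comparison: for the translation degeneracy in Stage~1 you use the implicit-function-theorem projection onto $T^\bot(\partial E)$, i.e.\ the original device of \cite[Lemma~3.8]{AFM}. This paper replaces that step by the penalization $Pen_E$ of Definition~\ref{def:pen}, which it advertises as a simplification. Either route works once the $W^{2,p}$ issue above is repaired.
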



\section{Results}

\subsection{Minimality in small domains}

The first result we would like to prove is a local minimality property of critical points with respect to sufficiently small perturbations.

\begin{proposition}\label{prop:small}
Let $E\subset\Pi^N$ be a critical point for the functional $\np^{\gamma}$.
Then there exists $\varepsilon>0$ such that
$$
\np^{\gamma}(E)\leq\np^{\gamma}(F)\,,
$$
for any set $F\subset\Pi^N$ having $E\triangle F\Subset B_{\varepsilon}(x)$, for some $x\in \bar{E}$.
\end{proposition}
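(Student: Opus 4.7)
The plan is to combine a calibration by the signed distance to $\partial E$ with the critical point equation $H_{\partial E}+4\gamma v^E=\lambda$, splitting the argument by the location of $x\in\bar E$.

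If $x\in\mathrm{int}(E)$ (the case $x\in\mathrm{int}(\Pi^N\setminus E)$ being symmetric), then for $\varepsilon$ small $B_\varepsilon(x)\subset\mathrm{int}(E)$, and any admissible $F$ differs from $E$ only by a set $A\subset B_\varepsilon(x)$ whose boundary is disjoint from $\partial E$. Consequently $\per(F)-\per(E)=\per(A)\geq c_N|A|^{(N-1)/N}$ by the isoperimetric inequality, while $|\nl(F)-\nl(E)|\leq c_0|A|$ by Proposition~\ref{prop:lip}; since $|A|\leq|B_\varepsilon|$, for $\varepsilon$ sufficiently small the perimeter gain dominates the nonlocal loss.

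In the more delicate case $x\in\partial E$, I would pick $\varepsilon$ so small that $B_{2\varepsilon}(x)$ lies in a tubular neighborhood $U_\rho$ of $\partial E$ where the signed distance function $d$ (positive outside $E$) is $C^2$, and take the calibration $X:=\nabla d$. Since $|X|\equiv1$ and $X|_{\partial E}=\nu_E$, the divergence theorem yields $\per(F)-\per(E)\geq\int(\chi_F-\chi_E)\div X\,\dx$. In $U_\rho$ one computes $\div X=H_{\partial E}\circ\pi+O(d)$, where $\pi$ is the nearest-point projection; inserting the critical equation and the H\"older regularity $v^E\in C^{1,\alpha}$ (from $v^E\in W^{2,p}(\Pi^N)$ for all $p<\infty$) then gives $\div X=\lambda-4\gamma v^E+O(\varepsilon^\alpha)$ on $B_\varepsilon(x)$. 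Meanwhile, the symmetry of $G_{\Pi^N}$ together with $-\triangle v^E=u^E-m^E$ gives
\[
\nl(F)-\nl(E)=4\int_{\Pi^N}v^E(\chi_F-\chi_E)\,\dx+\int_{\Pi^N}|\nabla(v^F-v^E)|^2\,\dx,
\]
with the last term nonnegative. Combining the two estimates yields
\[
\np^\gamma(F)-\np^\gamma(E)\geq\lambda(|F|-|E|)+O(\varepsilon^\alpha)|F\triangle E|+\gamma\int_{\Pi^N}|\nabla(v^F-v^E)|^2\,\dx.
\]

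The main obstacle I expect is absorbing the Lagrange multiplier contribution $\lambda(|F|-|E|)$ together with the residual $O(\varepsilon^\alpha)|F\triangle E|$. The first vanishes under the volume constraint $|F|=|E|$ implicit in this paper's notion of local minimizer; the second should then be controlled either by refining the calibration so that $\div X$ equals $\lambda-4\gamma v^E$ exactly on $B_\varepsilon(x)$ (via a small elliptic correction to $\nabla d$), or by a penalization argument in the spirit of Lemma~\ref{lem:qm} combined with the improved convergence Theorem~\ref{thm:W}, which would force any putative competitor minimizer of the constrained problem inside $B_\varepsilon(x)$ to coincide with $E$ once $\varepsilon$ is small enough.
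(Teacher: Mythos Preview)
Your interior case ($x\in\mathrm{int}(E)$) is correct and essentially coincides with the paper's ``Second part''.

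The boundary case, however, has a real gap. The signed-distance calibration $X=\nabla d$ only detects first-order information: it yields
\[
\np^\gamma(F)-\np^\gamma(E)\;\geq\;\lambda(|F|-|E|)-C\varepsilon\,|F\triangle E|+\gamma\int_{\Pi^N}|\nabla(v^F-v^E)|^2\,\dx,
\]
and even under the volume constraint the negative residual $-C\varepsilon\,|F\triangle E|$ is first order in $|F\triangle E|$, while the only surviving positive term $\gamma\!\int|\nabla(v^F-v^E)|^2$ is quadratic (bounded by a constant times $|F\triangle E|^2$). Since $|F\triangle E|\leq|B_\varepsilon|\sim\varepsilon^N$, the residual dominates and the inequality can fail. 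Your fix~(a) does not rescue this: if you seek $X=\zeta\,\nabla d$ with $\zeta=1$ on $\partial E$ and $\div X=\lambda-4\gamma v^E$ exactly, an ODE along normal lines gives $\zeta=1+\tfrac12\bigl(|B_{\partial E}|^2-4\gamma\,\partial_\nu v^E\bigr)d^2+O(d^3)$, so $|X|\leq1$ near $\partial E$ would force the pointwise condition $|B_{\partial E}|^2\leq4\gamma\,\partial_\nu v^E$, which criticality alone does not imply. Your fix~(b) is essentially the paper's Step~3 (penalization plus improved convergence), but that step needs as input a $W^{2,p}$-local minimality statement, which in turn rests on the second-order positivity you never establish.

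The missing ingredient is the paper's Step~1: for $\varphi$ supported in $\partial E\cap B_{2\varepsilon}(\bar x)$ the Poincar\'e constant $C_\varepsilon$ on that small cap blows up as $\varepsilon\to0$, so
\[
\partial^2\np^\gamma(E)[\varphi]\;\geq\;\int_{\partial E}|D_\tau\varphi|^2\,\dh-\bigl(\|B_{\partial E}\|_\infty^2+4\gamma\|\partial_\nu v^E\|_\infty\bigr)\int_{\partial E}\varphi^2\,\dh>0
\]
once $C_{2\varepsilon}$ exceeds a fixed constant depending only on $E$ and $\gamma$. This is a genuinely second-order mechanism (coercivity of the gradient term on small caps) that a first-order calibration cannot see; it is what feeds into Step~2 ($W^{2,p}$-minimality, via the two-step argument of \cite[Theorem~3.9]{AFM}) and then Step~3 (the contradiction argument with $(\omega,r)$-minimizers and Theorem~\ref{thm:W}). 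Without it, neither of your proposed fixes closes the argument.
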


\begin{proof}[Sketch of the proof]
\emph{First part}. We first want to prove that we can find $\widetilde{\varepsilon}>0$ such that
\begin{equation*}
\np^\gamma(E)\leq\np^\gamma(F)\,,
\end{equation*}
whenever $F$ is a subset of $\Pi^N$ having $E\triangle F\Subset B_{\widetilde{\varepsilon}}(x)$, for some $x\in\partial E$.

Fix $\bar{x}\in\partial E$. The idea is to adapt to our case the proofs of the various steps leading to \cite[Theorem 1.1]{AFM}.\\
\emph{Step 1}. For any $\varepsilon>0$ sufficiently small, the following Poincar\'{e} inequality holds:
$$
\int_{\partial E\cap B_\varepsilon(\bar{x})} |D_\tau\varphi|^2 \dh\geq C_\varepsilon \int_{\partial E\cap B_\varepsilon(\bar{x})} \varphi^2 \dh\,,
$$
whenever $\varphi\in H^1(\partial E)$ has support contained in $B_\varepsilon(x)$. We know that $C_\varepsilon\rightarrow+\infty$ as
$\varepsilon\rightarrow0$. Let $M>0$ such that
$$
|B_{\partial E}|< M\,,\quad\quad\quad |\partial_\nu v^E|<M\,,
$$
and take $\varepsilon>0$ such that $C_{2\varepsilon}>M(1+4\gamma)$. Notice that it is possible to write
\begin{equation}\label{eq:write}
\int_{\partial E}\int_{\partial E}G_{\Pi^N}(x,y) \varphi(x) \varphi(y)\,\dh(x)\dh(y) = \int_{\Pi^N} |\nabla z|^2\dx\,, 
\end{equation}
where $-\triangle z=\varphi\h\measurerestr\partial E$. Thus, we have that
\begin{equation}\label{eq:posloc}
\partial^2\np^\gamma(E)[\varphi]>0\,,
\end{equation}
for any $\varphi\in H^1(\partial E)\meno\{0\}$ with support contained in $B_{2\varepsilon}(\bar{x})$.\\

\emph{Step 2}. We claim that it is possible to find constants $\delta>0$ and $C_0>0$ such that
$$
\np^{\gamma}(E) + C_0\bigl( \alpha(E,F) \bigr)^2 \leq \np^{\gamma}(F)\,,
$$
whenever $F\subset\Pi^N$, with $|F|=|E|$, is such that $\partial F=\{x+\psi(x)\nu_E(x)\;:\;x\in\partial E\}$, for some
$\|\psi\|_{W^{2,p}(\partial E)}\leq\delta$ with support contained in $B_{2\varepsilon}(\bar{x})$, for $p>\max\{2,N-1\}$.
We use the two step technique of \cite[Theorem 3.9]{AFM}. We first prove that we can find constants $\delta>0$ and $D>0$ such that
\begin{align*}
\inf\Bigl\{ \partial^2\np^\gamma(F)[\varphi]\;:\; &\varphi\in \widetilde{H}^1(\partial F)\,,\,\|\varphi\|_{H^1(\partial F)}=1\,,\,\\
&\mathrm{supp}(\varphi)\subset B_{2\varepsilon}(x)\,,\, \Bigl| \int_{\partial F} \varphi\nu_F\dh \Bigr|\leq \delta\,\Bigr\}\geq D\,,
\end{align*}
whenever $F\subset\Pi^N$, with $|F|=|E|$, is such that
$$
\partial F=\{ x+\psi(x)\nu_E(x) \,:\, x\in\partial E \}\,,
$$
for some $\psi\in W^{2,p}(\partial E)$ with $\|\psi\|_{W^{2,p}(\partial E)}\leq\delta$.
To prove it, we reason by contradiction as in the first step of the proof of \cite[Theorem 3.9]{AFM}.

Consider the flow $\Phi$, given by Lemma ~\ref{lem:X}, connecting the sets $E$ and $F$, and let $E_t:=\Phi_t(E)$. Then it is possible to write
$$
\np^\gamma(F)-\np^\gamma(E)= \int_0^1 (1-t)\Bigl( \partial^2\np(E_t)[X\cdot\nu_{E_t}] -
                     \int_{\partial E_t}(4\gamma v^{E_t}+H_t)\div_{\tau_t}(X_{\tau_t}(X\cdot \nu_{E_t})) \Bigr)\mathrm{d}t\,,
$$
where $\div_{\tau_t}$ is the tangential divergence on $\partial E_t$ and $X_{\tau_t}:=(X\cdot\tau_{E_t})\tau_{E_t}$.
It is possible to estimate from below of the integral, as it is done in the second step of the proof of \cite[Theorem 3.9]{AFM}.
Namely, it is possible to find $\delta>0$ such that
$$
\Bigl|  \int_{\partial E_t}(4\gamma v^{E_t}+H_t)\div_{\tau_t}(X_{\tau_t}(X\cdot \nu_{E_t})) \mathrm{d}t \Bigr|\leq
               \frac{D}{2}\|X\cdot\nu^{E_t}\|^2_{H^(\partial E_t)}\,,
$$
for all $t\in[0,1]$. Thus, with the above uniform coercivity property of $\partial^2\np(E_t)$ in force, we conclude.\\

\emph{Step 3}. For any $\varepsilon\ll 1$, let $\mathcal{I}_\varepsilon\subset B_{\sqrt{\varepsilon}}(\bar{x})$ be a smooth open set with the following properties:
the curvature of $\mathcal{I}_\varepsilon$ is uniformly bounded with respect to $\varepsilon$, the sets
$E\cup \mathcal{I}_\varepsilon$ and $E\meno \mathcal{I}_\varepsilon$ are smooth and $B_\varepsilon(\bar{x}) \subset \mathcal{I}_\varepsilon$
(see Figure ~\ref{fig:tn}). We claim that it is possible to find $\varepsilon>0$ such that
$$
\np^\gamma(E)\leq\np^\gamma(F)\,,
$$
for every set $F\subset\Pi^N$ with $|F|=|E|$, such that $E\triangle F\Subset \mathcal{I}_\varepsilon$.
The proof of such a result is similar to those of \cite[Theorem 4.3]{AFM}, where we reason by the sake of contradiction as follows:
suppose there exist a sequence $\varepsilon_n\rightarrow0$ and a corresponding sequence of sets $(F_n)_n$ with
$|F_n|=|E|$ and $E\meno \mathcal{I}_{\varepsilon_n}\subset F_n \subset E\cup\mathcal{I}_{\varepsilon_n}$, such that
$$
\np^\gamma(F_n)<\np^\gamma(E)\,.
$$
Using the uniform bound on the curvatures of the $\mathcal{I}_{\varepsilon_n}$'s, it is possible to prove, as in the first step of the proof of
\cite[Theorem 4.3]{AFM}, that we can find a sequence of uniform $(\omega,r)$-minimizers of the area functional $(E_n)_n$ with $|E_n|=|E|$ having $E_n\triangle E\Subset \mathcal{I}_{\varepsilon_n}$ and such that
$\np^\gamma(E_n)<\np^\gamma(E)$.
Thus, the improved convergence result stated in Theorem ~\ref{thm:W} allows us to say that the $E_n$'s converge to $E$ in the $C^{1,\beta}$-topology.
Finally, using the Euler-Lagrange equation satisfied by the $E_n$'s, it is also possible to prove that the
$E_n$'s actually converge to $E$ in the $W^{2,p}$-topology. This is in contradiction with the result of the previous step.\\

\begin{figure}
\includegraphics[scale=0.6]{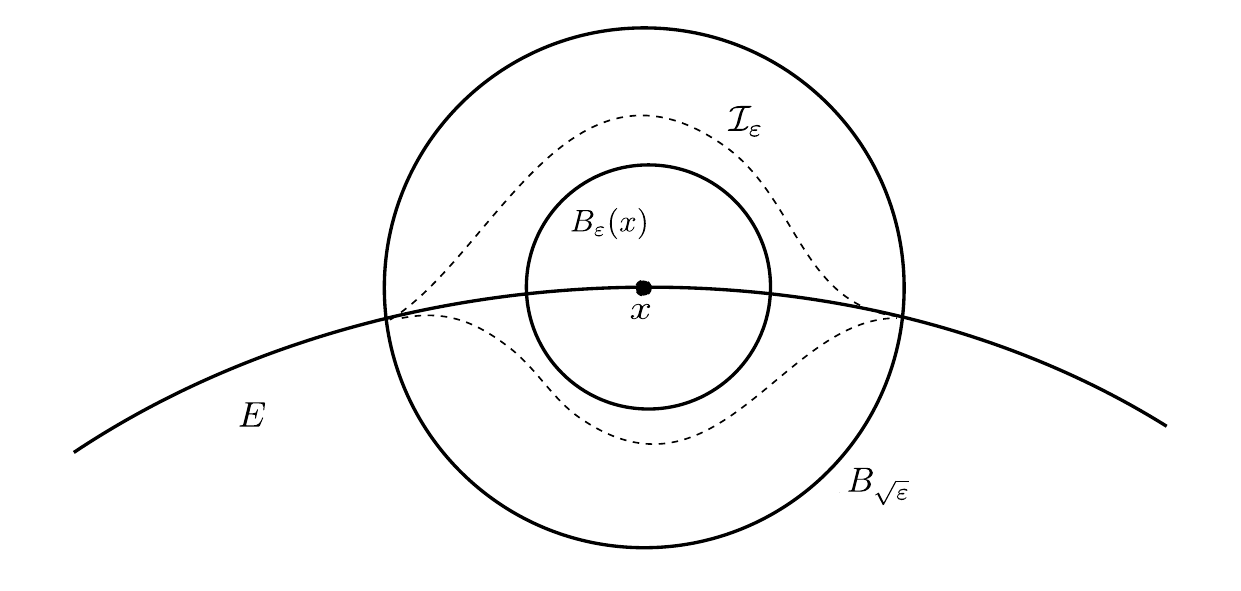}
\caption{An example of the set $\mathcal{I}_\varepsilon$.}
\label{fig:tn}
\end{figure} 

\emph{Step 4}. We now have to prove that the above constants can be made uniform with respect to $x\in\partial E$. Let us reason as follows: for any point
$x\in\partial E$, consider the ball $B_{\varepsilon(x)}(x)$, where $\varepsilon(x)>0$ is the radius found in Step 3 above.
Then it is possible to cover $\partial E$ with a finite family of such balls, let us say
$\left(B_{\varepsilon(x_i)}(x_i)\right)_{i=1}^L$.
We now claim that it is possible to find a constant $\widetilde{\varepsilon}>0$ with the following property: for any point $x\in\partial E$, there exists $i\in\{1,\dots,L\}$ such that $B_{\widetilde{\varepsilon}}(x)\subset B_{\varepsilon(x_i)}(x_i)$.
Indeed, let us suppose by contradiction that the claim is not true, \emph{i.e.}, there exist $(y_n)_n\subset\partial E$ such that $B_{\frac{1}{n}}(y_n)\not\subset B_{\varepsilon(x_i)}$ for all $i=1,\dots,L$. Then, by compactness of $\partial E$, up to a subsequence $y_n\rightarrow y\in\partial E$. By assumption $B_r(y)\not\subset B_{\varepsilon(x_i)}$ for all $i=1,\dots,L$. This contradicts the fact the covering property of the family of balls.

We can also suppose $\widetilde{\varepsilon}<\varepsilon(x_i)$ for each $i=1,\dots,L$.\\

\emph{Second part}. We now want to prove that we can find $\varepsilon\in(0,\widetilde{\varepsilon}/2)$ such that
\begin{equation}\label{eq:ineq}
\np^\gamma(E)\leq\np^\gamma(F)\,,
\end{equation}
whenever $F\subset\Pi^N$ is such that $E\triangle F\Subset B_{\varepsilon}(x)$, for some
$x\in E\meno(\partial E)_{\widetilde{\varepsilon}/2}$.

The key point is to observe the following:
\begin{equation}\label{eq:fineq}
|\nl(F)-\nl(E)|\leq c_0|E\triangle F|\leq C \per(E\triangle F)^{\frac{N}{N-1}}= C\bigl( \per(F)-\per(E) \bigr)^{\frac{N}{N-1}}\,.
\end{equation}
Indeed, the first inequality follows from the lipschitzianity of the nonlocal term (see Proposition \ref{prop:lip}), the second one from the quantitative isoperimetric inequality (see \cite{FusMagPra}), and the last one from the fact that $E\triangle F\Subset B_{\varepsilon}(x)$, with $x$ in the interior of $E$.
Notice that \eqref{eq:ineq} can be written as
$$
\per(F)-\per(E)\geq \gamma\bigl( \nl(E)-\nl(F) \bigr)\,.
$$
Using \eqref{eq:fineq} and the fact that $t^{\frac{N}{N-1}}<Ct$ for $t$ small, we know that the above inequality is satisfied if $\per(F)-\per(E)<\delta$, for some $\delta>0$. If instead it holds $\per(F)-\per(E)\geq\delta$, we obtain the validity of \eqref{eq:ineq} by noticing that
$$
|\nl(F)-\nl(E)|\leq c_0|E\triangle F|\leq C\varepsilon^N\,,
$$
and by taking $\varepsilon$ sufficiently small. This concludes the proof.
\end{proof}


\subsection{Uniform local minimizers}

We start by proving a lemma that will be used several times. The proof can be found in \cite{AFM} (Step 4 of the proof of Theorem 3.4), but we prefer to report it here for reader's convenience.

\begin{lemma}\label{lem:convc3}
Let $E\subset\Pi^N$ be a critical set for $\np^{\bar{\gamma}}$, with $\bar{\gamma}\geq0$. Then for any $\varepsilon>0$ it is possible to find
$\widetilde{\varepsilon}>0$ with the following property:
if $E_\gamma$ is a critical point of $\np^\gamma$, with $\gamma\in(\bar{\gamma}-\varepsilon,\bar{\gamma}+\varepsilon)$ such that
$\mathrm{d}_{C^1}(E,E_\gamma)<\varepsilon$, then $\mathrm{d}_{C^{3,\beta}}(E,E_\gamma)<\widetilde{\varepsilon}$, for all $\beta\in(0,1)$. Moreover, $\widetilde{\varepsilon}\to0$ as $\varepsilon\to0$.
\end{lemma}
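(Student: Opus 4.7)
The plan is a classical elliptic bootstrap: write $\partial E_\gamma$ as a normal graph over $\partial E$, interpret the Euler--Lagrange equation as a quasilinear elliptic PDE for the graph function, and upgrade regularity step by step, using the uniform $W^{2,p}$ control on $v^{E_\gamma}$ supplied by Remark~\ref{rem:nonloc}.

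First, since $E$ is (smooth and) of class $C^2$, a tubular neighborhood of $\partial E$ exists; picking $\varepsilon$ smaller than the reach of $\partial E$, the assumption $\mathrm{d}_{C^1}(E,E_\gamma)<\varepsilon$ allows us to write
$$
\partial E_\gamma=\{x+\psi_\gamma(x)\nu_E(x)\,:\,x\in\partial E\},\qquad \|\psi_\gamma\|_{C^1(\partial E)}<\varepsilon.
$$
Rewriting the critical-point condition $H_{\partial E_\gamma}+4\gamma v^{E_\gamma}=\lambda_\gamma$ in terms of $\psi_\gamma$ yields a quasilinear equation of the form
$$
\mathcal{Q}[\psi_\gamma]=\lambda_\gamma-4\gamma\, v^{E_\gamma}\!\left(\cdot+\psi_\gamma\nu_E\right)\qquad\text{on }\partial E,
$$
where $\mathcal{Q}$ is the mean-curvature operator expressed in the normal parametrization. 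For $\|\psi_\gamma\|_{C^1}$ small, $\mathcal{Q}$ is uniformly elliptic with coefficients that are smooth functions of $\psi_\gamma$ and $D\psi_\gamma$. The Lagrange multipliers $\lambda_\gamma$ are uniformly controlled: averaging the equation against $\dh$ on $\partial E_\gamma$ and using the uniform perimeter bound together with the uniform $L^\infty$-bound on $v^{E_\gamma}$ (from Remark~\ref{rem:nonloc}) yields a uniform bound on $|\lambda_\gamma|$.

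Next comes the bootstrap. By Remark~\ref{rem:nonloc}, $v^{E_\gamma}$ is uniformly bounded in $W^{2,p}(\Pi^N)$ for every $p<\infty$, hence in $C^{1,\alpha}(\Pi^N)$ for every $\alpha\in(0,1)$ by Sobolev embedding. With $\psi_\gamma\in C^1$ only, the composition $v^{E_\gamma}(\cdot+\psi_\gamma\nu_E)$ lies uniformly in $C^{0,\alpha}$; $W^{2,p}$-estimates for $\mathcal{Q}$ (with $p>N-1$) then give a uniform $W^{2,p}$-, and thus $C^{1,\alpha}$-, bound on $\psi_\gamma$. Feeding this back, the coefficients of $\mathcal{Q}$ become uniformly $C^{0,\alpha}$ and the right-hand side $C^{1,\alpha}$; Schauder yields uniform $C^{2,\alpha}$ bounds on $\psi_\gamma$. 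One more turn of the crank (now the coefficients are $C^{1,\alpha}$ and the RHS is still $C^{1,\alpha}$) gives a uniform bound
$$
\|\psi_\gamma\|_{C^{3,\alpha}(\partial E)}\le K,
$$
with $K=K(E,\bar\gamma,\varepsilon,\beta)$ independent of $\gamma\in(\bar\gamma-\varepsilon,\bar\gamma+\varepsilon)$.

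Finally, to turn this uniform bound into the smallness claim of the lemma, argue by contradiction/compactness: assume there exist $\gamma_n\to\bar\gamma$ and critical sets $E_{\gamma_n}$ with $\mathrm{d}_{C^1}(E,E_{\gamma_n})\to 0$ but $\mathrm{d}_{C^{3,\beta}}(E,E_{\gamma_n})\ge\eta_0>0$. The uniform $C^{3,\alpha}$ bound (for $\alpha>\beta$) gives, via Arzel\`a--Ascoli, a subsequence with $\psi_{\gamma_n}\to\psi_\infty$ in $C^{3,\beta}$; the $C^1$-smallness forces $\psi_\infty\equiv 0$, contradicting $\eta_0>0$. I expect the main technical nuisance to be keeping the Schauder constants truly uniform in $\gamma$---this rests on the uniform ellipticity of $\mathcal{Q}$ (guaranteed by the a priori $C^1$-smallness of $\psi_\gamma$) and on the uniform $W^{2,p}$ bound on $v^{E_\gamma}$, which is the ingredient that decouples the iteration from the unknown set $E_\gamma$ itself.
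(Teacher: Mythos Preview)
Your proposal is correct and follows essentially the same strategy as the paper: both reduce to the Euler--Lagrange equation written for a graph function, control the right-hand side via the uniform $W^{2,p}$ bound on $v^{E_\gamma}$, and conclude by elliptic regularity plus a sequential compactness argument. The only cosmetic difference is in the treatment of the Lagrange multiplier: the paper proves directly that $\lambda_{\gamma_n}\to\lambda$ by integrating the mean-curvature equation over a disk in local graph coordinates and applying the divergence theorem, whereas you only extract a uniform bound on $\lambda_\gamma$ by averaging and let Arzel\`a--Ascoli absorb the rest---both are perfectly adequate here.
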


\begin{proof}
Take a sequence $\gamma_n\rightarrow\bar{\gamma}$ and a sequence $(E_{\gamma_n})_n$, where $E_{\gamma_n}$ is a critical point of
$\np^{\gamma_n}$, with $\mathrm{d}_{C^1}(E,E_{\gamma_n})\rightarrow0$.
Thanks to the $C^1$-convergence of $E_{\gamma_n}$ to $E$ and by standard elliptic estimates, it is easy to see that
\begin{equation}\label{e3}
v^{E_{\gamma_n}}\rightarrow v^E \qquad\text{in }C^{1,\beta}(\Pi^N)\,,
\end{equation}
for all $\beta\in(0,1)$. 
Since we have
\begin{equation}\label{e1}
H_{\partial E} = \lambda- 4\bar{\gamma} v^{E}
\end{equation}
in force on $\partial E$, for some constant $\lambda$, and
\begin{equation}\label{e2}
H_{\partial E_{\gamma_n}} = \lambda_{\gamma_n} - 4{\gamma_n} v^{E_{\gamma_n}}
\end{equation}
valid $\partial E_{\gamma_n}$, if we prove that $\lambda_{\gamma_n}\rightarrow\lambda$, by standard elliptic estimates, we get that $E_{\gamma_n}\rightarrow E$ in $C^{3,\beta}$.
We work locally, by considering a cylinder $C=B'\times(-L,L)$, where $B'\subset\R^{N-1}$ is a ball centered at the origin,
such that in a suitable coordinate system we have
\begin{align*}
E_{\gamma_n}\cap C   &= \{ (x',x_N)\in C : x'\in B',\, x_N<g_{\gamma_n}(x') \},\\
E\cap C     &= \{ (x',x_N)\in C : x'\in B',\, x_N<g(x') \}
\end{align*}
for some functions $g_{\gamma_n}\to g$ in $C^{1,\beta}(\overline{B'})$.
By integrating \eqref{e2} on $B'$ we obtain
\begin{align*}
\lambda_{\gamma_n}&\mathcal{H}^{N-1}(B') - 4{\gamma_n}\int_{B'} v^{E_{\gamma_n}}(x',g_{\gamma_n}(x'))\,\dh(x')\\
&   = - \int_{B'} \div \biggl( \frac{\nabla g_{\gamma_n}}{\sqrt{1+|\nabla g_{\gamma_n}|^2}} \biggr) \,\dh(x')
    = -\int_{\partial B'} \frac{\nabla g_{\gamma_n}}{\sqrt{1+|\nabla g_{\gamma_n}|^2}} \cdot \frac{x'}{|x'|} \,\mathrm{d}\mathcal{H}^{N-2}\,,
\end{align*}
and the last integral in the previous expression converges, as $n\rightarrow\infty$, to
\begin{align*}
&-\int_{\partial B'} \frac{\nabla g}{\sqrt{1+|\nabla g|^2}} \cdot \frac{x'}{|x'|} \,\mathrm{d}\mathcal{H}^{N-2}
    = - \int_{B'} \div \biggl( \frac{\nabla g}{\sqrt{1+|\nabla g|^2}} \biggr) \,\dh(x')  \\
&= \lambda\mathcal{H}^{N-1}(B') - 4\bar{{\gamma_n}}\int_{B'} v^{E_{\bar{{\gamma_n}}}}(x',g_{\gamma_n}(x'))\,\dh(x') \,,
\end{align*}
where the last equality follows by \eqref{e1}. This shows, recalling \eqref{e3}, that $\lambda_{\gamma_n}\to\lambda$,
as $n\rightarrow\infty$.
\end{proof}

We now state the main result of this section, namely a uniform local minimality result for strictly stable critical points of $\np^\gamma$.

\begin{proposition}\label{prop:unif}
Let $E\subset\Pi^N$ be a strictly stable critical point for $\np^{\bar{\gamma}}$, $\bar{\gamma}\geq0$. Then there exist constants $\delta>0$, $\varepsilon>0$, $\widetilde{\gamma}>0$ and $C>0$ with the following property:
take $\gamma\in(\bar{\gamma}-\widetilde{\gamma},\bar{\gamma}+\widetilde{\gamma})$ and
let $E_\gamma$ be a critical point for $\np^\gamma$ with $\mathrm{d}_{C^1}(E,E_\gamma)<\varepsilon$; then
$$
\np^\gamma(E_\gamma)+C\bigl( \alpha(E_\gamma,F) \bigr)^2\leq\np^\gamma(F)\,,
$$
for every set $F\subset\Pi^N$, with $|F|=|E_\gamma|$, such that $\alpha(E_\gamma,F)\leq\delta$.
\end{proposition}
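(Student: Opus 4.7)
My approach is to combine a uniform version of the strict positivity of the second variation with the penalization trick provided by Lemma~\ref{lem:qm}, following the general scheme of \cite[Theorem~1.1]{AFM} but eliminating the need for \cite[Lemma~3.8]{AFM}, as the author advertises.

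\emph{Step 1 --- Uniform strict stability.} First I would prove that there exist $\sigma_0,\,\varepsilon_0,\,\widetilde\gamma_0>0$ such that
$$
\partial^2\np^\gamma(E_\gamma)[\varphi]\geq \sigma_0\|\varphi\|_{\widetilde H^1(\partial E_\gamma)}^2
$$
for every $\varphi\in T^\perp(\partial E_\gamma)$, every $\gamma\in(\bar\gamma-\widetilde\gamma_0,\bar\gamma+\widetilde\gamma_0)$, and every critical point $E_\gamma$ of $\np^\gamma$ with $\mathrm{d}_{C^1}(E,E_\gamma)<\varepsilon_0$. The argument is by contradiction: supposing the bound fails, pick sequences $\gamma_n\to\bar\gamma$, critical points $E_n\to E$ in $C^1$, and $\varphi_n\in T^\perp(\partial E_n)$ with $\|\varphi_n\|_{\widetilde H^1}=1$ and $\partial^2\np^{\gamma_n}(E_n)[\varphi_n]\to 0$. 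By Lemma~\ref{lem:convc3}, $E_n\to E$ in $C^{3,\beta}$; pulling back $\varphi_n$ to $\partial E$ via the normal-graph parameterization yields a sequence converging weakly in $H^1(\partial E)$ to some $\varphi$. The geometric quantities $|B|$, $H$, $v^{E_n}$ and $\partial_\nu v^{E_n}$ converge uniformly (Remark~\ref{rem:nonloc}), and the nonlocal bilinear form is compact for $H^1$-weak convergence, so $\varphi\in T^\perp(\partial E)$ and $\partial^2\np^{\bar\gamma}(E)[\varphi]\leq 0$. Strict stability of $E$ forces $\varphi\equiv 0$; the strong convergence of all lower-order terms then contradicts the Dirichlet-part lower bound supplied by $\|\varphi_n\|_{\widetilde H^1}=1$.

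\emph{Step 2 --- Penalization.} Thanks to Lemma~\ref{lem:qm}, it is enough to prove the stronger penalized inequality
$$
\np^\gamma(E_\gamma)+C\,\alpha(E_\gamma,F)^2 \leq \np^\gamma(F)+\Lambda\Bigl|\fint_{\Pi^N}u^F-\fint_{\Pi^N}u^{E_\gamma}\Bigr|
$$
for all $F\subset\Pi^N$ with $\alpha(E_\gamma,F)\leq\delta/2$, where $\Lambda\geq\Lambda_0$ is chosen uniformly in $\gamma$. For volume-preserving $F$ the penalty vanishes, so this implies the original claim. This is the author's promised simplification replacing \cite[Lemma~3.8]{AFM}.

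\emph{Step 3 --- Contradiction via Taylor expansion.} If the penalized inequality fails, there exist sequences $\gamma_n\to\bar\gamma$, $E_n=E_{\gamma_n}\to E$ in $C^1$, and sets $F_n$ with $\alpha(E_n,F_n)\to 0$ violating it with $C=1/n$. I would replace $F_n$ by a minimizer $\widetilde F_n$ of the penalized functional perturbed by $-\tfrac{1}{n}\alpha(E_n,\cdot)^2$, restricted to $\{\alpha(E_n,\cdot)\leq\delta/2\}$. Corollary~\ref{cor:qm} (applied with constants uniform in $n$, the Lipschitz perturbation being absorbed into $\omega$ via Proposition~\ref{prop:lip}) shows the $\widetilde F_n$ are uniform $(\omega,r)$-minimizers of the area functional. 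Theorem~\ref{thm:W} then gives $\partial\widetilde F_n=\{x+\psi_n(x)\nu_{E_n}(x):x\in\partial E_n\}$ with $\psi_n\to 0$ in $C^{1,\beta}$, and bootstrapping the Euler--Lagrange equation for $\widetilde F_n$ upgrades this to $W^{2,p}$ and then $C^{3,\beta}$. Connecting $E_n$ to $\widetilde F_n$ by the flow of Lemma~\ref{lem:X} with vector field $X_n$, setting $E_n^t:=\Phi_t(E_n)$, and using criticality of $E_n$ to kill the first-order term, Theorem~\ref{thm:var} yields
$$
\np^{\gamma_n}(\widetilde F_n)-\np^{\gamma_n}(E_n)=\int_0^1(1-t)\,\partial^2\np^{\gamma_n}(E_n^t)\bigl[X_n\cdot\nu_{E_n^t}\bigr]\,\mathrm{d}t+R_n,
$$
where $R_n$ collects the last term in Theorem~\ref{thm:var} and is small thanks to $C^{3,\beta}$ convergence, just as in Step~2 of the sketch of Proposition~\ref{prop:small}. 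Splitting $X_n\cdot\nu_{E_n^t}$ via the $L^2$-orthogonal decomposition $T^\perp\oplus T$, the $T$-component is bounded by the volume discrepancy (and therefore absorbed by the $\Lambda$-penalty) while the $T^\perp$-component is bounded below by Step~1, giving
$$
\np^{\gamma_n}(\widetilde F_n)+\Lambda\Bigl|\fint u^{\widetilde F_n}-\fint u^{E_n}\Bigr| \geq \np^{\gamma_n}(E_n)+c\,\alpha(E_n,\widetilde F_n)^2,
$$
which contradicts the definition of $\widetilde F_n$.

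The main obstacle is Step~1: the nonlocal quadratic form depends on the varying boundary $\partial E_\gamma$ both through the Green's function and through the potential $v^{E_\gamma}$, so establishing uniform continuity of the forms (and the compactness needed to run the contradiction argument on a moving domain) requires the full $C^{3,\beta}$ convergence provided by Lemma~\ref{lem:convc3}, combined with the $W^{2,p}$ regularity of $v^{E_\gamma}$ recorded in Remark~\ref{rem:nonloc}.
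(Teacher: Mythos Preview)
Your proposal misidentifies the penalization that the paper advertises and, as a consequence, has a real gap in Step~3. The penalty $\Lambda\bigl|\fint u^F-\fint u^{E_\gamma}\bigr|$ from Lemma~\ref{lem:qm} is a \emph{volume} penalty: it lets one drop the constraint $|F|=|E_\gamma|$, nothing more. It does not touch the degeneracy of the second variation along the translation directions $T=\mathrm{span}\{\nu_i\}$. Your assertion that ``the $T$-component is bounded by the volume discrepancy'' is false: every $\varphi\in T$ satisfies $\int_{\partial E_\gamma}\varphi\,\dh=\sum_i c_i\int_{E_\gamma}\div e_i\,\dx=0$, so $T\subset\widetilde H^1(\partial E_\gamma)$ and the volume penalty sees nothing of it. If $\widetilde F_n$ is close to a nontrivial translate of $E_n$ (which nothing in your construction prevents, since $\alpha$ is translation invariant), then $X_n\cdot\nu_{E_n^t}$ has a large $T$-component, the second variation contributes nothing there, and your Taylor-expansion lower bound collapses.

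The paper's simplification is a \emph{different} penalization, designed precisely to cure this translational degeneracy. One adds $Pen_{E_\gamma}(F)=\bigl|\int_F f_{E_\gamma}\,\dx-\int_{E_\gamma}f_{E_\gamma}\,\dx\bigr|^2$ with $f_{E_\gamma}=\nu_{E_\gamma}$ on $\partial E_\gamma$ (Definition~\ref{def:pen}); its second variation at $E_\gamma$ equals $2\bigl|\int_{\partial E_\gamma}\varphi\,\nu_{E_\gamma}\,\dh\bigr|^2$, which is strictly positive exactly on $T$, so the penalized quadratic form becomes coercive on all of $\widetilde H^1(\partial E_\gamma)$. This yields $W^{2,p}$ local minimality for the penalized functional (Lemma~\ref{lem:w2p}); the penalty is then removed by translating the competitor so that $Pen_{E_\gamma}(F+v)=0$ (Lemma~\ref{lem:penug}), giving $W^{2,p}$ local minimality of $\np^\gamma$ itself (Lemma~\ref{lem:minw2p}). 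Only after this does the paper run the selection-principle argument (uniform $(\omega,r)$-minimality, Theorem~\ref{thm:W}, an intermediate $L^\infty$ step, and the $\Gamma$-convergence contradiction) to upgrade to $L^1$ local minimality. Your Step~1 is in the right spirit and roughly matches Part~1 of Lemma~\ref{lem:w2p}, but without a mechanism to control the $T$-directions---either the paper's $Pen_{E_\gamma}$ or the original \cite[Lemma~3.8]{AFM}---Step~3 cannot be closed.
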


The proof of Proposition \ref{prop:unif} will follow the same strategy performed in \cite{AFM}.
The difficulty here is to check that all the estimates provided there can be made uniform with respect to the $C^1$ closeness of $E_\gamma$ to $E$.
We recall that, in checking this, we in fact simplify the general argument, by replacing \cite[Lemma 3.8]{AFM} with a penalization argument, that was inspired to us by \cite{DamLam}.
For reader's convenience, we recall here the general strategy we are going to use to prove Proposition \ref{prop:unif}.
It is divided into two main steps: the first one is to prove a quantitative isolated local minimality property of a strictly stable critical point $E$ with respect to the $W^{2,p}$-topology, \emph{i.e.}, with respect to sets whose boundary is a $W^{2,p}$ graph on $\partial E$ (see Lemma \ref{lem:minw2p}). For technical reasons, we will first prove this kind of local minimality property for a penalized functional (see Lemma \ref{lem:w2p}) and then prove that this implies the required local minimality property.
Then, we are going to use a selection principle argument (see \cite{AFM} and \cite{CicLeo}) to obtain the $L^1$ isolated local minimality from the $W^{2,p}$ one. This will also require an intermediate technical result stating an $L^\infty$ local minimality property for $E$.

Let us now start by proving a uniform version of a technical result present in \cite{AFM}.

\begin{definition}\label{def:tn}
Let $F\subset\Pi^N$ be a set of class $C^\infty$. We will denote by $\mathcal{N}_\mu(F)$, with $\mu>0$, a tubular neighborhood of $F$ where the signed distance $\mathrm{d}_F$ from $F$ and the projection $\pi_F$ on $\partial F$ are smooth in $\mathcal{N}_\mu(F)$.
\end{definition}

\begin{lemma}\label{lem:X}
Let $E\subset\Pi^N$ be a strictly stable critical point for $\np^{\bar{\gamma}}$, $\bar{\gamma}\geq0$, and let $p>\max\{2,N -1\}$.
Then, there exist constants $\mu>0$, $\widetilde{\gamma}>0$, $\varepsilon>0$ and $C>0$ with the following property:

for any critical point $E_\gamma$ of $\np^\gamma$, with
$\gamma\in(\bar{\gamma}-\widetilde{\gamma},\bar{\gamma}+\widetilde{\gamma})$ and $\mathrm{d}_{C^1}(E,E_\gamma)<\varepsilon$, and any $\psi\in C^\infty(E_\gamma)$ with
$\|\psi\|_{W^{2,p}(\partial E_\gamma)}\leq\varepsilon$, there exists a vector field $X\in C^\infty$ with $\div X=0$ in $\mathcal{N}_\mu(E_\gamma)$ such that, if we consider its flow, \emph{i.e.}, the solution of
\begin{equation}\label{eq:flow}
\frac{\partial\Phi}{\partial t}= X(\Phi)\,,\quad\quad\Phi(0,x)=x\,,
\end{equation}
we have $\Phi(1,x)=x+\psi(x)\nu_{E_\gamma}(x)$, for any $x\in\partial E_\gamma$. Moreover, the following estimate holds true
$$
\|\Phi(t,\cdot)-\id\|_{W^{2,p}(\partial E_\gamma)}\leq C \|\psi\|_{W^{2,p}(\partial E_\gamma)}\,.
$$
Finally, set $E^t_\gamma:=\Phi(t,E_\gamma)$, and suppose $|E^1_\gamma|=|E_\gamma|$. Then $|E_\gamma^t|=|E_\gamma|$ for all $t\in[0,1]$, and
$$
\int_{\partial E_\gamma^t} X\cdot\nu_{E_\gamma^t} \dh = 0\,.
$$
\end{lemma}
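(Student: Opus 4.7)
The plan is to realize $X$ as a divergence-free vector field on $\Pi^N$ whose restriction to $\partial E_\gamma$ equals $\psi\nu_{E_\gamma}$, constructed explicitly in normal coordinates around $\partial E_\gamma$. All stated properties will then follow from direct computation together with the uniform quality of the tubular-neighborhood parametrizations provided by Lemma~\ref{lem:convc3}.

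I would first fix $\widetilde{\gamma},\varepsilon>0$ small enough that Lemma~\ref{lem:convc3} upgrades the $C^1$-closeness hypothesis on $E_\gamma$ to $C^{3,\beta}$-closeness for every admissible $\gamma$. This guarantees a common $\mu>0$ for which $\mathcal{N}_\mu(E_\gamma)$ is a genuine tubular neighborhood of $\partial E_\gamma$, with signed distance $d_\gamma:=d_{E_\gamma}$, projection $\pi_\gamma:=\pi_{E_\gamma}$, and Jacobian $J_\gamma(y,s)$ of the chart $(y,s)\mapsto y+s\nu_{E_\gamma}(y)$ all uniformly controlled in $C^{2,\beta}$, with $J_\gamma(\cdot,0)\equiv 1$ and $J_\gamma$ uniformly bounded away from zero.

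I would then set
$$
g(y):=\int_0^{\psi(y)}J_\gamma(y,\sigma)\,\mathrm{d}\sigma\,,\qquad X(x):=\frac{g(\pi_\gamma(x))\,\eta(d_\gamma(x))}{J_\gamma(\pi_\gamma(x),d_\gamma(x))}\,\nabla d_\gamma(x)\,,
$$
where $\eta\in C_c^\infty((-\mu,\mu))$ is a cutoff with $\eta\equiv 1$ on $[-\mu/2,\mu/2]$, and extend $X$ by zero outside $\mathcal{N}_\mu(E_\gamma)$. Since the Euclidean divergence of a purely normal field $V=V^s e_N$ in normal coordinates reads $J_\gamma^{-1}\partial_s(J_\gamma V^s)$, and in our ansatz $J_\gamma V^s=g(y)\eta(s)$ is independent of $s$ throughout $\{|d_\gamma|\leq\mu/2\}$, one obtains $\div X=0$ on that set. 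Using the Sobolev embedding $W^{2,p}(\partial E_\gamma)\hookrightarrow L^\infty$ (valid for $p>N-1$), one has $\|\psi\|_{L^\infty}\ll\mu$, so trajectories of $X$ starting on $\partial E_\gamma$ remain in $\{|d_\gamma|\leq\mu/2\}$ for $t\in[0,1]$. For $x=(y,0)\in\partial E_\gamma$ the ODE \eqref{eq:flow} reduces to $\dot s=g(y)/J_\gamma(y,s)$, $s(0)=0$, so $\int_0^{s(1)}J_\gamma(y,\sigma)\,\mathrm{d}\sigma=g(y)$; by the very definition of $g$ this forces $s(1)=\psi(y)$, i.e.\ $\Phi(1,x)=x+\psi(x)\nu_{E_\gamma}(x)$.

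The estimate $\|\Phi(t,\cdot)-\id\|_{W^{2,p}(\partial E_\gamma)}\leq C\|\psi\|_{W^{2,p}(\partial E_\gamma)}$ would follow by combining the explicit bound $\|X\|_{W^{2,p}(\mathcal{N}_\mu(E_\gamma))}\leq C\|\psi\|_{W^{2,p}(\partial E_\gamma)}$ (coming from the uniform $C^{2,\beta}$-control on $\pi_\gamma,d_\gamma,J_\gamma$ and the pointwise estimate $|g|\lesssim|\psi|$) with a Gronwall argument applied to \eqref{eq:flow} and to its first and second spatial derivatives. Volume preservation is then immediate: $\partial E_\gamma^t$ stays inside $\{|d_\gamma|\leq\mu/2\}$, where $\div X=0$, so the divergence theorem yields $\tfrac{\mathrm{d}}{\mathrm{d}t}|E_\gamma^t|=\int_{\partial E_\gamma^t}X\cdot\nu_{E_\gamma^t}\,\dh=0$, the hypothesis $|E_\gamma^1|=|E_\gamma|$ being what propagates constancy of the volume function across any correction of $X$ that may be needed to make it globally solenoidal off the relevant layer. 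I expect the main technical point to be precisely this simultaneous enforcement of $\div X=0$ \emph{and} the exact pointwise identity $\Phi(1,x)=x+\psi(x)\nu_{E_\gamma}(x)$: the $1/J_\gamma$-weighting makes $g$ independent of $s$ (hence $\div X=0$), while the implicit definition $g(y)=\int_0^{\psi(y)}J_\gamma(y,\sigma)\,\mathrm{d}\sigma$ encodes exactly the normal displacement required to land on the target point at time one. Uniformity in the family $\{E_\gamma\}$ then follows without additional effort from the uniform $C^{3,\beta}$-closeness to $E$.
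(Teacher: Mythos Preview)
Your construction is essentially the paper's: the paper sets $X$ proportional to $\xi\,\nabla d_{E_\gamma}$ where $\xi$ solves the normal ODE $\partial_s\xi+\xi\,\triangle d_{E_\gamma}=0$ with $\xi(\cdot,0)=1$; since $\partial_s\log J_\gamma=\triangle d_{E_\gamma}$ this forces $\xi=1/J_\gamma$, so your weight and the paper's coincide, and the uniform tubular neighborhood coming from Lemma~\ref{lem:convc3} is invoked in the same way.

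The one imprecision is your volume step. Applying the divergence theorem over $E_\gamma^t$ does \emph{not} directly give $\int_{\partial E_\gamma^t}X\cdot\nu_{E_\gamma^t}\,\dh=0$, because $\div X\neq 0$ in the cutoff layer $\{\mu/2<|d_\gamma|<\mu\}$, and this layer meets $E_\gamma^t$. The paper handles this by computing instead
\[
\frac{\mathrm{d}^2}{\mathrm{d}t^2}|E_\gamma^t|=\int_{\partial E_\gamma^t}(\div X)(X\cdot\nu_{E_\gamma^t})\,\dh=0,
\]
which vanishes since $\div X=0$ along $\partial E_\gamma^t\subset\{|d_\gamma|\le\mu/2\}$; hence $t\mapsto|E_\gamma^t|$ is affine and therefore constant by the endpoint condition $|E_\gamma^0|=|E_\gamma^1|$. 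Your closing clause gestures toward using this hypothesis but does not carry the argument out.
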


\begin{proof}
Take $0<\varepsilon<\varepsilon_0$, where $\varepsilon_0>0$ is the constant given by Lemma ~\ref{lem:convc3}. Then, possibly reducing $\varepsilon$, we can find $\mu>0$ and $\widetilde{\gamma}\in(0,\varepsilon)$ such that $\mathcal{N}_\mu(E_\gamma)$ is a tubular neighborhood of $E_\gamma$ (see Definition ~\ref{def:tn}) for every $E_\gamma$ critical point of $\np^\gamma$, with $\gamma\in(\bar{\gamma}-\widetilde{\gamma},\bar{\gamma}+\widetilde{\gamma})$ and $\mathrm{d}_{C^1}(E,E_\gamma)<\varepsilon$.
For reader's convenience we recall the idea behind the construction of the vector field $X$.
Since we have to connect $E_\gamma$ with $F$, whose boundary is a normal graph over the boundary of $E_\gamma$, the simplest thing to do is to use a vector field that, in a neighborhood of $\partial E_\gamma$, coincides with the (usual extension) of the normal vector field $\nabla \mathrm{d}_{E_\gamma}$ on $\partial E_\gamma$. Since we have to satisfy a divergence-free condition, we have to modify the normal vector field $\nabla\mathrm{d}_{E_\gamma}$ as follows
\[
\widetilde{X}(z):=\xi(z)\nabla\mathrm{d}_{E_\gamma}\,,
\]
for $z\in\mathcal{N}_\mu(E_\gamma)$, in such a way that $\div\widetilde{X}=0$. This leads to the equation
\begin{equation}\label{eq:x}
\xi\triangle\mathrm{d}_{E_\gamma}+\nabla \xi\cdot\nabla\mathrm{d}_{E_\gamma}=0\,,\quad\quad\text{in } \mathcal{N}_\mu(E_\gamma)\,.
\end{equation}
Since, for $\delta>0$ sufficiently small, each point $z\in\mathcal{N}_\mu(E_\gamma)$ can be written as
$z=x+\mathrm{d}_{E_\gamma}(z)\nabla\mathrm{d}_{E_\gamma}(z)$, by defining the function
$f(x,t):=f_x(t):=\xi(x+t\mathrm{d}_{E_\gamma}(x))$, for $x\in\partial E_\gamma$ and $t\in(-\mu,\mu)$ for some small $\mu>0$, we can write \eqref{eq:x} as
\[
\left\{
\begin{array}{l}
(f_x)'(t)+f_x(t)\triangle \mathrm{d}_{E_\gamma}(x+t\nu_{E_\gamma}(x))=0\,,\\
f_x(0)=1\,.
\end{array}
\right.
\]
Thus, by taking
$$
\xi(x+t\nu_{E_\gamma}(x)):=f_x(t)=\exp\Bigl( -\int_0^t \triangle \mathrm{d}_{E_\gamma}(x+s\nu_{E_\gamma}(x))\mathrm{d}s \Bigr)\,,
$$
we are sure that $\div\widetilde{X}=0$ in $\mathcal{N}_\mu(E_\gamma)$. Finally, we have to ensure that at time $t=1$ the flow starting from $E_\gamma$ will be such that $\Phi(1,x)=x+\psi(x)\nu_{E_\gamma}(x)$. 
For, we modify our vector field $\widetilde{X}$ as follows
$$
X(z):=\frac{\psi\bigl( \pi_{E_\gamma}(z) \bigr)}{\int_0^1 \xi\bigl( \pi_{E_\gamma}(z) +s\nabla\mathrm{d}_{E_\gamma}\bigl(\pi_{E_\gamma}(z) \bigr) \bigr)\mathrm{d} s}\xi(z)\nabla d_{E_\gamma}(z)\quad\quad \text{ for } z\in\mathcal{N}_\mu(E_\gamma)\,,
$$
and extending it in a smooth way in the whole $\Pi^N$.
The relation between the constants $\varepsilon$ and $\mu$ is the following:
using again the $C^{3,\beta}$-closeness of $E_\gamma$ to $E$, it is possible to find a constant $C>0$ such that
$\|\psi\|_{L^\infty(\partial {E_\gamma})}\leq C \|\psi\|_{W^{2,p}(\partial {E_\gamma})}< C \varepsilon$ for any set $E_\gamma$ as above. Take $0<\varepsilon<\mu/C$.

We now prove some estimates on $\Phi$. First of all notice that we can find a constant $C>0$ such that, for every set $E_\gamma$ as above, it holds
$$
\|X\|_{W^{2,p}(\mathcal{N}_\mu(E_\gamma))}\leq C \|\psi\|_{W^{2,p}(\partial E_\gamma)}\,.
$$
Thus, by the definition of the flow $\Phi$, we have that
$$
\|\Phi-\id\|_{C^0(\mathcal{N}_\mu(E_\gamma))}\leq C \|\psi\|_{W^{2,p}(\partial E_\gamma)}\,.
$$
To estimate the other norms, we just differentiate in \eqref{eq:flow} to obtain
$$
\|\nabla_x\Phi(t,\cdot)-\id\|_{C^0(\mathcal{N}_\mu(E_\gamma))}\leq C_\mu\|\nabla X\|_{C^0(\mathcal{N}_\mu(E_\gamma))}\leq
            C_\mu \|\psi\|_{W^{2,p}(\partial E_\gamma)}\,.
$$
Since this shows that the $(N-1)$-dimensional Jacobian of $\Phi(t,\cdot)$ is uniformly close to $1$ on $\partial E_\gamma$, differentiating again in \eqref{eq:flow}, we obtain also the following estimate:
$$
\|\nabla^2_x\Phi(t,\cdot)\|_{L^p(\partial E_\gamma)}\leq C_\mu \|\nabla^2 X\|_{L^p(\mathcal{N}_\mu(E_\gamma))}\,.
$$
Finally, if $|E_\gamma^1|=|E_\gamma|$, then
$$
\frac{\mathrm{d}^2}{\mathrm{d} t^2}|E_t|=\int_{E_\gamma^t}(\div X)(X\cdot\nu_{E_\gamma})\dh=0\quad\quad\text{ for all } t\in[0,1]\,.
$$
This follows from \cite[Equation (2.30)]{ChoSte}. Thus, the function $t\mapsto|E_\gamma^t|$ is affine in $[0,1]$, and since $|E_\gamma|=|E_\gamma^t|$,
we have that it is constant. So
$$
0=\frac{\mathrm{d}}{\mathrm{d} t}|E_t|=\int_{E_\gamma^t}\div X\dh=\int_{\partial E_\gamma^t} X\cdot\nu_{E_\gamma^t}\dh\,.
$$
This concludes the proof of the lemma.
\end{proof}

We now introduce the penalization we are going to use as an intermediate step in the proof of the uniform $W^{2,p}$-local minimality. The aim is to get rid of the technicality introduced in \cite{AFM} in order to deal with the translation invariance of the functional, for which we would have to check that the estimates they provide are uniform with respect to the parameter $\gamma$. By looking at the steps of the proof of \cite[Theorem 3.9]{AFM}, it turns out that a way to get rid of the above technicalities is the following: given a critical and strictly stable set $E$, we need to add to $\np^\gamma$ a penalization, depending on $E$, that vanishes at $E$ and whose second variation, computed at a set $F$, is strictly positive for functions $\varphi\in H^1(\partial F)\meno T^\perp(\partial F)$.
In order to define our penalized functional we need the following technical lemma, whose proof is left to the reader.

\begin{lemma}\label{lem:f}
Let $E\subset\Pi^N$ be a regular set, and let $M>\|\nu_E\|_{C^1(\partial E)}$.
Then there exist a function $f_E:\Pi^N\rightarrow\R^N$ such that:
\begin{itemize}
\item[(i)]  $f_E(z+t\nu_E(z))=\nu_E(z)$ for every point $z+t\nu_E(z)\in\R^N$, where $z\in\partial E$ and $|t|<t_0$, for some $t_0>0$ depending on $E$,
\item[(ii)] $\|f_E\|_{C^1(\Pi^N;\R^N)}<M$.
\end{itemize}
Moreover, it is possible to find $\varepsilon>0$ with the following property.
For every $F\subset\Pi^N$ with $\mathrm{d}_{C^1}(E,F)<\varepsilon$, and every $\eta>0$,
it is possible to find $\widetilde{\eta}>0$ such that
\begin{equation}\label{eq:imp}
\Bigl| \int_{\partial F^\psi} \varphi f_E\dh \Bigr|\leq \eta\quad \Rightarrow
         \quad \Bigl| \int_{\partial F^\psi} \varphi\nu_{F^\psi}\dh \Bigr|\leq \widetilde{\eta}\,,
\end{equation}
for any function $\varphi\in \widetilde{H}^1(\partial F^\psi)$ with $\|\varphi\|_{H^1(\partial F^\psi)}=1$,
whenever $F^\psi\subset\Pi^N$ is such that $\partial F^\psi=\{ x+\psi(x)\nu_F(x) \,:\, x\in \partial F \}$ for some
$\|\psi\|_{W^{2,p}(\partial F)}\leq\eta$. In particular, it holds that $\widetilde{\eta}\to0$ as $\eta\to0$.
\end{lemma}

We are now in position to define our penalized functional.

\begin{definition}\label{def:fpen}
Let $E\subset\Pi^N$ be regular set, and let $F\subset\Pi^N$. We define the penalized functional $\np^\gamma_E:\Pi^N\to[0,\infty)$ as
$$
\np^\gamma_{f_E,F}(G):=\np^\gamma(G) + \mathrm{Pen}_{f_E,F}(G)\,,
$$
where the penalization $\mathrm{Pen}_E:\Pi^N\to[0,\infty)$ is defined via
$$
\mathrm{Pen}_{f_E,F}(G):=\Bigl| \int_G f_E(x)\,\mathrm{d}x - \int_F f_E(x)\,\mathrm{d}x \Bigr|^2\,.
$$
and $f_E:\Pi^N\to\R^N$ is the function given by Lemma \ref{lem:f}, relative to $E$.
\end{definition}

\begin{remark}
Notice that $\mathrm{Pen}_{f,E}(E)=0$, for every function $f:\Pi^N\to\R$ and every set $E\subset\Pi^N$.
\end{remark}

In the following lemma we calculate the first and the second variation of the penalization $\mathrm{Pen}_E$.

\begin{lemma}
Let $E,F\subset\Pi^N$ and let $f:\Pi^N\to\R$ be a $C^1$ function.
Consider an admissible family of diffeomorphisms $(\Phi(\cdot,t))_t$ for $F$ (see Definition \ref{def:adm}).
Then the first variations of $\mathrm{Pen}_{f,E}$ computed at $F$ with respect to the family $(\Phi(\cdot,t))_t$ reads as
$$
{\frac{\mathrm{d}}{\mathrm{d} s}\mathrm{Pen}_{f,E}(F_s)}_{|s=t} 
    = 2 \Bigl( \int_{F_t} f\,\mathrm{d}x - \int_E f\,\mathrm{d}x \Bigr)\cdot
           \int_{\partial F_t} f_E(X\cdot\nu_{F_t})\dh\,,
$$
and the second variation is given by
\begin{align*}
&{\frac{\mathrm{d}^2}{\mathrm{d} t^2} \mathrm{Pen}_{f,E}(F_t)}_{|t=0} = 2 \,\Bigl| \int_{\partial F} f(X\cdot\nu_F)\dh \Bigr|^2 \\
&\quad\quad+ 2 \Bigl( \int_F f\,\mathrm{d}x - \int_E f\,\mathrm{d}x \Bigr)\cdot \int_{\partial F}f[(X\cdot\nu_F)\div X - \div_\tau(X_\tau(X\cdot\nu_F))]\dh\,.\\
\end{align*}
\end{lemma}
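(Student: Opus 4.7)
The plan is to reduce everything to shape derivatives of the vector-valued quantity $V(t) := \int_{F_t} f\dx - \int_{E} f\dx \in \R^N$, which is natural because $Pen_E(F_t) = V(t)\cdot V(t)$ by definition.

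For the first variation I would apply the classical Reynolds transport formula componentwise, obtaining $V'(t) = \int_{\partial F_t} f\,(X\cdot\nu_{F_t})\dh$, and then combine it with the chain rule $\frac{\mathrm{d}}{\mathrm{d}s}Pen_E(F_s) = 2\,V(s)\cdot V'(s)$ to read off the stated expression at once.

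For the second variation at $t=0$ the product rule gives
\[
\frac{\mathrm{d}^2}{\mathrm{d}t^2}Pen_E(F_t)\Big|_{t=0} = 2\,|V'(0)|^2 + 2\,V(0)\cdot V''(0),
\]
which already explains the squared-norm contribution, so I only have to recast $V''(0)$ into the claimed boundary form. To this end I would rewrite $V_i'(t) = \int_{F_t}\div(f_i X)\dx$ via the divergence theorem, and then apply Reynolds transport a second time (now to the autonomous integrand $\div(f_i X)$), arriving at $V_i''(0) = \int_{\partial F}\div(f_i X)(X\cdot\nu_F)\dh$.

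The decisive step is then a tangential integration by parts. Expanding $\div(f_i X) = f_i\div X + X\cdot\nabla f_i$, decomposing $X = X_\tau + (X\cdot\nu_F)\nu_F$ along $\partial F$, and applying the tangential divergence theorem to the purely tangential field $f_i X_\tau(X\cdot\nu_F)$ on the closed surface $\partial F$ converts the cross term $\int_{\partial F}(X_\tau\cdot\nabla_\tau f_i)(X\cdot\nu_F)\dh$ into $-\int_{\partial F} f_i\div_\tau\bigl(X_\tau(X\cdot\nu_F)\bigr)\dh$. Reassembling the pieces yields the stated boundary form. The whole computation is a routine exercise in shape calculus: no delicate geometric or analytic estimate is required, and the only mildly subtle point is the tangential integration by parts, which is set up precisely so that the resulting formula parallels the structure of the second variation of the perimeter in Theorem~\ref{thm:var} and can later be combined with it in the penalization argument.
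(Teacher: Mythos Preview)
Your proposal is correct and follows the same overall scheme as the paper: work componentwise with $g_i(t):=\int_{F_t}f_i\dx$, obtain $g_i'(t)=\int_{\partial F_t}f_i(X\cdot\nu_{F_t})\dh$ via the transport/divergence theorem, and then reduce $g_i''(0)$ to the stated boundary form by a tangential integration by parts. The only genuine difference is in how $g_i''(0)$ is reached. The paper differentiates the boundary integral $\int_{\partial F_t}f_i(X\cdot\nu_{F_t})\dh$ directly, which produces the terms $\div_\tau(X(X\cdot\nu_F))$, $Z\cdot\nu$, $-2X_\tau\cdot\nabla_\tau(X\cdot\nu)$, $D\nu_F[X_\tau,X_\tau]$ coming from the variation of $\nu_{F_t}$ and of the surface Jacobian, and then invokes the algebraic identities from \cite[Theorem~3.1]{AFM} to collapse them. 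You instead rewrite $g_i'(t)=\int_{F_t}\div(f_iX)\dx$ and apply the transport theorem a second time to the autonomous integrand, getting $g_i''(0)=\int_{\partial F}\div(f_iX)(X\cdot\nu_F)\dh$ directly. Your route is more elementary---it bypasses the AFM machinery for differentiating normals and Jacobians---while the paper's route makes the structural parallel with the second variation of the perimeter explicit, which is convenient when the two are later combined in the penalization argument. Both approaches land on the same intermediate expression $\int_{\partial F}f_i(\div X)(X\cdot\nu_F)\dh+\int_{\partial F}(\nabla f_i\cdot X)(X\cdot\nu_F)\dh$ and finish with the same tangential integration by parts on $f_iX_\tau(X\cdot\nu_F)$.
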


\begin{proof}
Fix $i=1,\dots,N$ and consider the scalar function $g:(-1,1)\rightarrow\R$ given by
$$
g(t):=\int_{F_t} f_i(x)\,\mathrm{d}x \,.
$$
Then
$$
g'(t)= \int_{F_t} \bigl( \nabla f_i\cdot X_t + f_i\div X_t \bigr)\,\mathrm{d}x 
    = \int_{\partial F_t} f_i(X_t\cdot\nu_{F_t})\dh\,.
$$
Moreover
\begin{align*}
g''(0)&= \frac{\mathrm{d}}{\mathrm{d}t} \Bigl( \int_{\partial F_t} f_i(X\cdot\nu_{F_t})\dh \Bigr)_{|_{t=0}}\\
&= \int_{\partial F} f_i\frac{\mathrm{d}}{\mathrm{d}t}\bigl( (X\circ\Phi_t)\cdot(\nu_{F_s}\circ\Phi_t)J^{N-1}\Phi_t \bigr)_{|_{t=0}}\dh \\
&\hspace{0.6cm} +\int_{\partial F} (\nabla f_i\cdot X)(X\cdot\nu_F)\dh \\
&= \int_{\partial F} f_i\bigl[ \div_\tau(X(X\cdot\nu_F)) + Z\cdot\nu-2X_\tau\cdot\nabla_\tau(X\cdot\nu) + D\nu_F[X_\tau, X_\tau] \bigr]\dh\\
&\hspace{0.6cm}+ \int_{\partial F} (\nabla f_i\cdot X)(X\cdot\nu_F)\dh  \\     
&= \int_{\partial F} f_i\bigl[ (X\cdot\nu)\div X - \div_\tau(X\tau(X\cdot\nu)) \bigr]\dh\,,
\end{align*}
where in the last step we used the same computations as in \cite[Theorem 3.1]{AFM}.
\end{proof}

Motivated by the fact that
\begin{equation}\label{eq:secbarpenf}
{\frac{\mathrm{d}^2}{\mathrm{d} t^2} \mathrm{Pen}_{f_E,E}(E_t)}_{|t=0} = 2\, \Bigl| \int_{\partial E} \nu_E(X\cdot\nu_{E})\dh \Bigr|^2\,,
\end{equation}
we introduce the following quadratic form.

\begin{definition}
For $\gamma\geq0$ and $f:\Pi^N\to\R$, we define the quadratic form
$\partial^2\np^\gamma_f(E):\widetilde{H}^1(\partial F)\rightarrow\R$ as:
$$
\partial^2\np^\gamma_f(E)[\varphi] := \partial^2\np^\gamma(E)[\varphi] + 2\, \Bigl| \int_{\partial E} \varphi f \dh \Bigr|^2\,.
$$
\end{definition}

\begin{remark}
Let $E\subset\Pi^N$ be be a strictly stable critical point for $\np^{\gamma}$. Then
$$
\partial^2\np^\gamma_{f_E}(E)[\varphi] > 0\quad\quad\text{ for all } \varphi\in \widetilde{H}^1(\partial E)\meno\{0\}\,. 
$$
Indeed, from \eqref{eq:secbarpenf}, the term due to the second variation of the penalization is non-negative and vanishes only for
$\varphi\in T^\bot(\partial E)$. Moreover, by the strict stability of $E$, we know that $\partial^2\np^{\gamma}(E)$ is strictly positive on $T^\bot(\partial F)\meno\{0\}$.
\end{remark}

The idea is to prove a $W^{2,p}$ local minimality result for the penalized functional and then check that it is possible to remove $\mathrm{Pen}_E$ and to obtain a similar result for the functional $\np^\gamma$.

\begin{lemma}\label{lem:w2p}
Let $p>\max\{2,N-1\}$, and let $E\subset\Pi^N$ be a strictly stable critical point for $\np^{\bar{\gamma}}$. Then there exist constants $\widetilde{\gamma}>0$, $\delta>0$, $\varepsilon>0$ and $C>0$ with the following property.

For $\gamma\in(\bar{\gamma}-\widetilde{\gamma},\bar{\gamma}+\widetilde{\gamma})$,
let $E_\gamma$ be a critical point for $\np^\gamma$ with $\mathrm{d}_{C^1}(E,E_\gamma)<\varepsilon$; then it holds that
$$
\np^\gamma_{f_E,E_\gamma}(F)\geq \np^\gamma_{f_E,E_\gamma}(E_\gamma) + C |E_\gamma\triangle F|^2\,,
$$
for every set $F\subset\Pi^N$ with $|F|=|E_\gamma|$ and $\partial F=\{ x+\psi(x)\nu_{E_\gamma}(x) \;:\; x\in\partial E_\gamma \}$ for some $\|\psi\|_{W^{2,p}(\partial E_\gamma)}\leq\delta$.
\end{lemma}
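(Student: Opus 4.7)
The plan is to follow the two-step scheme of \cite[Theorem~3.9]{AFM}, but working with the penalized functional $\np^\gamma_{E_\gamma}$ and checking that every constant can be chosen uniformly in $\gamma$ and in the critical set $E_\gamma$ lying in a small $C^1$-neighborhood of $E$. The penalization $Pen_{E_\gamma}$ is precisely the device that lets us avoid the technical Lemma 3.8 of \cite{AFM}: since $\partial^2\np^{\bar\gamma}_E(E)$ is strictly positive on \emph{all} of $\widetilde H^1(\partial E)\setminus\{0\}$ (not merely on $T^\bot(\partial E)$), we can hope for a clean Taylor-expansion argument without fighting the translation degeneracy separately.

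The central step would be a uniform coercivity estimate: to find $\widetilde\gamma,\varepsilon,\sigma>0$ such that
$$
\partial^2\np^\gamma_{E_\gamma}(E_\gamma)[\varphi]\geq \sigma\,\|\varphi\|_{\widetilde H^1(\partial E_\gamma)}^2
$$
for every admissible $\gamma$, every critical $E_\gamma$ with $\dist_{C^1}(E,E_\gamma)<\varepsilon$, and every $\varphi\in\widetilde H^1(\partial E_\gamma)$. I would prove this by contradiction: assuming failure, one extracts sequences $\gamma_n\to\bar\gamma$, critical sets $E_{\gamma_n}$ (converging to $E$ in $C^{3,\beta}$ thanks to Lemma~\ref{lem:convc3}) and $\varphi_n\in\widetilde H^1(\partial E_{\gamma_n})$ with $\|\nabla\varphi_n\|_{L^2}=1$ and $\partial^2\np^{\gamma_n}_{E_{\gamma_n}}(E_{\gamma_n})[\varphi_n]\to 0$. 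Pulling back via the normal-graph diffeomorphisms from $\partial E_{\gamma_n}$ onto $\partial E$, we obtain a weakly convergent subsequence $\widetilde\varphi_n\rightharpoonup\widetilde\varphi$ in $H^1(\partial E)$. The compact terms — $|B_{\partial E_{\gamma_n}}|^2\varphi_n^2$, $\partial_{\nu}v^{E_{\gamma_n}}\varphi_n^2$, the double integral via the representation \eqref{eq:write} and standard elliptic estimates, and the penalization square — pass to the limit, while the gradient term is lower-semicontinuous. This gives $\partial^2\np^{\bar\gamma}_E(E)[\widetilde\varphi]\leq 0$. If $\widetilde\varphi\not\equiv 0$, this contradicts the strict positivity of $\partial^2\np^{\bar\gamma}_E(E)$; if $\widetilde\varphi\equiv 0$, strong $L^2$-convergence kills every zero-order term, forcing $\|\nabla\widetilde\varphi_n\|_{L^2}\to 0$, against normalization.

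Once uniform coercivity is established, I would conclude via a Taylor argument. Given $F$ with $\partial F=\{x+\psi(x)\nu_{E_\gamma}(x)\}$, $\|\psi\|_{W^{2,p}(\partial E_\gamma)}\leq\delta$ and $|F|=|E_\gamma|$, Lemma~\ref{lem:X} provides a divergence-free vector field $X$ whose time-$1$ flow $\Phi$ sends $E_\gamma$ to $F$ while preserving the volume of each $E_\gamma^t:=\Phi(t,E_\gamma)$. Since $E_\gamma$ is critical and $Pen_{E_\gamma}(E_\gamma)=0$, the first variation of $\np^\gamma_{E_\gamma}$ at $E_\gamma$ vanishes, so
$$
\np^\gamma_{E_\gamma}(F)-\np^\gamma_{E_\gamma}(E_\gamma)=\int_0^1(1-t)\,\frac{\mathrm d^2}{\mathrm ds^2}\np^\gamma_{E_\gamma}(E_\gamma^s)\bigg|_{s=t}\,\mathrm dt.
$$
Using Theorem~\ref{thm:var} together with the second-variation formula for $Pen_{E_\gamma}$ derived above, the integrand splits as $\partial^2\np^\gamma_{E_\gamma}(E_\gamma^t)[X\cdot\nu_{E_\gamma^t}]$ plus a remainder involving $(H_{\partial E_\gamma^t}+4\gamma v^{E_\gamma^t}-\lambda_\gamma)\,\div_\tau(X_\tau(X\cdot\nu_{E_\gamma^t}))$ and the analogous residue from the penalization. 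The $C^{3,\beta}$-closeness of $E_\gamma^t$ to $E_\gamma$ (hence to $E$) and the Euler--Lagrange equation make this remainder an $o(1)\,\|X\cdot\nu_{E_\gamma^t}\|_{H^1}^2$ as $\delta\to 0$, uniformly in $\gamma$ and $E_\gamma$. Combining this with the uniform coercivity applied along $E_\gamma^t$ (whose second variation is close to that of $E_\gamma$ for small $\delta$) absorbs the error into half of $\sigma$, giving a bound of the form $\frac{\sigma}{4}\int_0^1\|X\cdot\nu_{E_\gamma^t}\|_{H^1}^2\,\mathrm dt$. Finally, the $W^{2,p}$ estimate on $\Phi-\id$ from Lemma~\ref{lem:X} and a standard computation convert this into $C\,|E_\gamma\triangle F|^2$, concluding the proof.

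The main obstacle is without doubt the uniform coercivity step: one must transport functions defined on the moving boundaries $\partial E_{\gamma_n}$ back to $\partial E$ while maintaining control of the nonlocal integral $\int\!\!\int G_{\Pi^N}(x,y)\varphi(x)\varphi(y)\,\dh\dh$, and one must verify that the penalization part $\bigl|\int_{\partial E_{\gamma_n}}\varphi_n f_{E_{\gamma_n}}\dh\bigr|^2$ remains coercive enough in the limit to replace the role played by the orthogonality constraint in \cite{AFM}. This is exactly where Lemma~\ref{lem:f} — and in particular the implication \eqref{eq:imp} — enters: it ensures that a vanishing penalization in the limit forces the tangential component with respect to the true normal $\nu_{E}$ to vanish as well, reducing the problem to the strictly stable cone $T^\bot(\partial E)$ and closing the contradiction.
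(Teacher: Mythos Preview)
Your plan is correct and tracks the paper's two-step scheme (uniform coercivity of $\partial^2\np^\gamma_{E_\gamma}$, then Taylor expansion along the flow of Lemma~\ref{lem:X}), with one organizational difference worth flagging. The paper proves coercivity (its Step~1) directly at \emph{every} set $F$ that is a $W^{2,p}$-small normal graph over $E_\gamma$, not only at the critical $E_\gamma$; it does this via a dichotomy: either $\bigl|\int_{\partial F}\varphi f_{E_\gamma}\dh\bigr|>\eta$, so the penalization term alone supplies the lower bound, or it is $\leq\eta$, and then \eqref{eq:imp} from Lemma~\ref{lem:f} forces $\bigl|\int_{\partial F}\varphi\nu_F\dh\bigr|$ small, reducing to the near-orthogonal case treated exactly as in \cite[Theorem~3.9, Step~1]{AFM}. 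Your route---coercivity at $E_\gamma$ first, then a perturbation to reach $E_\gamma^t$---is also valid, but the parenthetical ``whose second variation is close to that of $E_\gamma$ for small $\delta$'' hides precisely the same transport-and-compare work that the paper makes explicit in its Step~1, so no effort is actually saved. One small correction: in your direct contradiction argument at $E_\gamma$ you do not need Lemma~\ref{lem:f} or \eqref{eq:imp} at all, since the Remark following the definition of $\partial^2\np^\gamma_F$ already gives strict positivity of $\partial^2\np^{\bar\gamma}_E(E)$ on all of $\widetilde H^1(\partial E)\setminus\{0\}$; that lemma is the hinge of the paper's dichotomy at non-critical $F$, not of your argument at the critical $E_\gamma$.
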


\begin{proof}
\emph{Step 1}. We claim that is possible to find constants $\widetilde{\gamma}>0$, $\delta>0$, $\varepsilon>0$ and $D>0$ such that, for any $\gamma\in(\bar{\gamma}-\widetilde{\gamma}, \bar{\gamma}+\widetilde{\gamma})$, any critical set $E_\gamma\subset\Pi^N$ for $\np^\gamma$, with $|E_\gamma|=|E|$ and $\mathrm{d}_{C^1}(E,E_\gamma)<\varepsilon$,
we have
\begin{equation}\label{eq:m}
\inf\Bigl\{ \partial^2\np^\gamma_{f_E,E_\gamma}(F)[\varphi]\;:\;\varphi\in \widetilde{H}^1(\partial F)\,,\,
    \|\varphi\|_{H^1(\partial F)}=1\,\Bigr\}\geq D\,,
\end{equation}
whenever $F\subset\Pi^N$, with $|F|=|E|$, is such that
$$
\partial F=\{ x+\psi(x)\nu_{E_\gamma}(x) \,:\, x\in\partial E_\gamma \}\,,
$$
for some $\psi\in W^{2,p}(\partial E_\gamma)$ with $\|\psi\|_{W^{2,p}(\partial E_\gamma)}\leq\delta$.

\emph{Part 1.} We first prove that we can find constants as above such that
$$
\inf\Bigl\{ \partial^2\np^\gamma_{f_E,E_\gamma}(F)[\varphi]\;:\;\varphi\in \widetilde{H}^1(\partial F)\,,\,
\|\varphi\|_{H^1(\partial F)}=1\,,\, \Bigl| \int_{\partial F} \varphi\nu_F\dh \Bigr| < \delta \Bigr\}\geq D\,,
$$
for sets $F\subset\Pi^N$ as above.

In this case we can reason as follows: suppose for the sake of contradiction that there exist a sequence $\gamma_n\rightarrow\bar{\gamma}$, a sequence of sets
$(E_{\gamma_n})_n$ with $|E_{\gamma_n}|=|E|$ and $E_{\gamma_n}\rightarrow E$ in $C^1$
(by Lemma ~\ref{lem:convc3} we can say that the convergence holds in $C^{3,\beta}$),
a sequence of sets $(F_n)_n$ with $|F_n|=|E|$ and
$$
\partial F_n=\{ x+\psi_n(x)\nu_{E_{\gamma_n}}(x) \,:\, x\in\partial E_{\gamma_n} \}\,,
$$
for $\psi_n\in W^{2,p}(\partial E_{\gamma_n})$ with $\|\psi_n\|_{W^{2,p}(\partial E_{\gamma_n})}\leq 1/n$, and a sequence of functions
$\varphi_n\in\widetilde{H}^1(\partial F_n)$ with $\|\varphi_n\|_{H^1(\partial F_n)}=1$ and
$\int_{\partial F_n}\varphi_n\nu_{F_n}\rightarrow0$, such that
$$
\partial^2\np^{\gamma_n}(F_n)[\varphi_n]\rightarrow0\quad\quad\text{as } n\rightarrow\infty\,.
$$
One can see that $E_{\gamma_n}\rightarrow E$ in $C^{3,\beta}$ implies that $F_n\rightarrow E$ in $W^{2,p}$.
Then there exist diffeomorphisms $\Phi_n: E\rightarrow F_n$ converging to the identity in $W^{2,p}(\partial E)$. The idea now is to consider the functions
$\widetilde{\varphi}_n\in\widetilde{H}^1(\partial E)$ defined as
$$
\widetilde{\varphi}_n:=\varphi_n\circ\Phi_n-a_n\,,
$$
where , $a_n:=\int_{\partial E} \varphi_n\circ\Phi_n\dh$, and to prove that
\begin{equation}\label{eq:conv1}
\partial^2\np^{\gamma_n}(F_n)[\varphi_n] - \partial^2\np^{\gamma_n}(E)[\widetilde{\varphi}_n]\rightarrow0\,,
\end{equation}
and that
\begin{equation}\label{eq:conv2}
\partial^2\np^{\gamma_n}(E)[\bigl(\widetilde{\varphi}_n\bigr)^\bot]-\partial^2\np^{\gamma_n}(E)[\widetilde{\varphi}_n]\rightarrow0\,,
\end{equation}
where $(\widetilde{\varphi}_n\bigr)^\bot$ is the $L^2$-orthogonal projection of $\widetilde{\varphi}_n$ on $T^\bot{\partial}$ (see \eqref{eq:tbot}).
The above convergences are proved exactly as in Step 1 of \cite[Theorem 3.9]{AFM}, where we notice that the convergence of the term of the quadratic form due to the penalization, is easily seen to converge.

This allows to conclude: indeed, from the fact that
\begin{equation}\label{eq:conv3}
\partial^2\np^{\gamma_n}(E)[\bigl(\widetilde{\varphi}_n\bigr)^\bot]-\partial^2\np^{\bar{\gamma}}(E)[\bigl(\widetilde{\varphi}_n\bigr)^\bot]\rightarrow0\,,
\end{equation}
we obtain a contradiction with
$$
\inf\bigl\{ \partial^2\np^{\bar{\gamma}}(E)[\varphi]\;:\; \varphi\in T^\bot(\partial E)\setminus\{0\},\, 
                \|\varphi\|_{H^1(\partial E)}=1 \bigr\}\geq C>0\,.
$$
This last fact follows from the strict positivity of the second variation (see \cite[Lemma 3.6]{AFM}).
In order to prove \eqref{eq:conv1} and \eqref{eq:conv2} we have just to repeat the same computation as in step 1 of \cite[Theorem 3.9]{AFM}.
Finally \eqref{eq:conv3} is easily seen to be true.\\

\emph{Part 2.} Let $\eta>0$ be the constant provided by Lemma \ref{lem:f}. Assume that the corresponding $\widetilde{\eta}>0$ is such that  $\widetilde{\eta}<\delta$, where $\delta>0$ is the constant we found in Part 1.
Then we have two possibilities. In the case where
$$
\Bigl| \int_{\partial F} \varphi f_E \,\dh \Bigr| > \eta\,,
$$
the claim follows by using the definition of $\partial^2\np^\gamma_{E_\gamma}(F)$, from which we get that $\partial^2\np^\gamma_{E_\gamma}(F)[\varphi] > 2\eta^2$.
Otherwise, the following inequality is in force
\begin{equation}\label{eq:case}
\Bigl| \int_{\partial F} \varphi f_E\dh \Bigr| \leq \eta\,,
\end{equation}
and in this case by Lemma \ref{lem:f} we infer that
\[
\Bigl| \int_{\partial F} \varphi \nu_F\dh \Bigr| \leq \widetilde{\eta}<\delta\,.
\]
So that the validity of the claim is provided by the result proved in the previous part.\\

\emph{Step 2}. To conclude, we have to check that all the estimates needed in the second step of \cite[Theorem 3.9]{AFM} can be made uniform with respect to
$\gamma\in(\bar{\gamma}-\widetilde{\gamma}, \bar{\gamma}+\widetilde{\gamma})$.
For any pair of sets $E_\gamma$ and $F$ as in the statement, consider the vector field $X_\gamma$ and its flow $\Phi_\gamma(\cdot,t)$, provided by Lemma ~\ref{lem:X}. Let $E_\gamma^t:=\Phi_\gamma(E_\gamma,t)$. Fixed $\varepsilon>0$, it is possible to find $\varepsilon>0$ and $\delta>0$ such that
$$
\| \nu_{E_\gamma} - \nu_{E_\gamma^t}\bigl(\Phi_n(\cdot,t)\bigr) \|_{L^\infty}<\varepsilon\,,\quad\quad
           \| J^{N-1}\bigl(\Phi_\gamma(\cdot,t)\bigr) - 1 \|_{L^\infty}<\varepsilon\,.
$$
Moreover, thanks to the $C^1$-closeness of $E_\gamma^t$ to $E$, we can also suppose
$$
\|4\gamma v^{E_\gamma^t} + H_{E^t_\gamma} - \lambda_\gamma  \|_{L^\infty}<\varepsilon\,,
$$
where $4\gamma v^{E_\gamma} + H_{E_\gamma} = \lambda_\gamma$. Finally, thanks to the uniform control on the gradient of the functions $f_{E_\gamma}$, up to taking smaller $\varepsilon>0$ and $\delta>0$, we have
$$
\Bigl| \int_{E_\gamma^t} f_E\,\mathrm{d}x - \int_{E_\gamma} f_E\,\mathrm{d}x  \Bigr|<\varepsilon\,,
$$
for every $t\in[0,1]$. Thus, we can write
\begin{align*}
\np^\gamma_{f_E,E_\gamma}(F) &- \np^\gamma_{f_E,E_\gamma}(E_\gamma) = \int_0^1 (1-t)\Biggl[ \partial^2\np^\gamma_{E_\gamma}(E_\gamma^t)[X_\gamma\cdot\nu_{E_\gamma^t}] \\
&- \int_{\partial E_\gamma^t}(4\gamma v^{E_\gamma^t}+H_{E^t_\gamma})\div_{\tau_t}(X_\gamma^{\tau_t}(X_\gamma\cdot \nu_{E_\gamma^t})) \dh\\
&-  2 \Bigl( \int_{E^\gamma_t} f_E\,\mathrm{d}x - \int_{E_\gamma} f_E\,\mathrm{d}x \Bigr)\cdot        
                   \int_{\partial E^t_\gamma}f_E\,\div_{\tau_t}(X_\gamma^{\tau_t}(X_\gamma\cdot\nu_{E^t_\gamma}))\dh \Bigg]\mathrm{d}t\,.
\end{align*}
Since the vector fields $X_\gamma$'s are uniformly close in the $C^1$-topology, it is possible to find a constant $C>0$ such that
$$
\|\div_{\tau_t}(X_\gamma^{\tau_t}(X_\gamma\cdot\nu_{E^t_\gamma}))\|_{L^{\frac{p}{p-1}}(\partial E_\gamma^t)}\leq
        C \|X_\gamma\cdot \nu_{E^t_\gamma}\|^2_{H^1(\partial E_\gamma^t)}\,,
$$
for every $\gamma\in(\bar{\gamma}-\widetilde{\gamma}, \bar{\gamma}+\widetilde{\gamma})$. Thus, the above uniform estimates allow us to conclude, as in \cite[Theorem 3.9]{AFM}.
\end{proof}

We now need a technical result that will allow us to obtain the above local minimality property also for the functional $\np^\gamma$.

\begin{lemma}\label{lem:penug}
Let $E$ and $E_\gamma$ as in the statement of Lemma ~\ref{lem:w2p}, and consider the function $f_E$ given by Lemma ~\ref{lem:f}.
Then there exists $\varepsilon>0$ with the following property: for any $F\subset\Pi^N$ with
$\mathrm{d}_{C^1}(E_\gamma,F)<\varepsilon$, there exists $v\in\R^N$ such that
$$
\int_{F+v} f_E\,\mathrm{d}x = \int_{E_\gamma} f_E\,\mathrm{d}x\,.
$$
\end{lemma}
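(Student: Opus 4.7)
The plan is to apply the inverse function theorem to the map $v\mapsto\int_{F+v}f_\gamma\dx$, viewed as a $C^1$ function of $v$ in a neighborhood of $0\in\R^N$. The strategy is to show that for $F=E_\gamma$ this map is a local diffeomorphism near $v=0$ onto a neighborhood of $\int_{E_\gamma}f_\gamma\dx$, and then that a perturbation argument in the $C^1$-proximity of $F$ to $E_\gamma$ keeps $\int_{E_\gamma}f_\gamma\dx$ inside the image of $v\mapsto\int_{F+v}f_\gamma\dx$.

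Setting $\Phi_F(v):=\int_{F+v}f_\gamma\dx=\int_{F}f_\gamma(x+v)\dx$, the second expression makes manifest that $\Phi_F\in C^1(\R^N;\R^N)$ with $D\Phi_F(v)=\int_{F+v}\nabla f_\gamma\dx$. A componentwise application of the divergence theorem (to the vector fields $f_\gamma^i e_j$) yields
\[
(D\Phi_F(v))_{ij}=\int_{\partial(F+v)}f_\gamma^i\,\nu_{F+v}^j\,\dh.
\]
Evaluated at $F=E_\gamma$ and $v=0$, and using $f_\gamma=\nu_{E_\gamma}$ on $\partial E_\gamma$, this reduces to the Gram matrix
\[
\bigl(D\Phi_{E_\gamma}(0)\bigr)_{ij}=\int_{\partial E_\gamma}\nu_{E_\gamma}^i\,\nu_{E_\gamma}^j\,\dh
\]
of the components of $\nu_{E_\gamma}$ in $L^2(\partial E_\gamma)$.

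The main obstacle is showing that this Gram matrix is \emph{uniformly} invertible. At the reference set $E$ it is positive definite, since the components $\nu_E^1,\dots,\nu_E^N$ are linearly independent in $L^2(\partial E)$: this non-degeneracy is the implicit hypothesis underlying the decomposition $\widetilde{H}^1(\partial E)=T^{\perp}(\partial E)\oplus T(\partial E)$ used throughout the paper. By Lemma \ref{lem:convc3}, the sets $E_\gamma$ are $C^{3,\beta}$-close to $E$ for $\gamma\in(\bar\gamma-\widetilde\gamma,\bar\gamma+\widetilde\gamma)$ and $\mathrm{d}_{C^1}(E,E_\gamma)<\varepsilon$, and the entries of the Gram matrix depend continuously on $E_\gamma$ in this topology; hence the Gram matrix stays uniformly positive definite throughout that parameter range.

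With uniform invertibility of $D\Phi_{E_\gamma}(0)$ in hand, the quantitative inverse function theorem furnishes radii $\rho,\rho'>0$, independent of $\gamma$ and of $E_\gamma$ in the above range, such that $\Phi_{E_\gamma}$ is a $C^1$-diffeomorphism from $B_\rho(0)$ onto an open set containing the ball $B_{\rho'}\bigl(\int_{E_\gamma}f_\gamma\dx\bigr)$. Since
\[
\bigl|\Phi_F(v)-\Phi_{E_\gamma}(v)\bigr|\leq\|f_\gamma\|_{L^\infty(\Pi^N)}\,|F\triangle E_\gamma|\longrightarrow 0
\]
as $\mathrm{d}_{C^1}(E_\gamma,F)\to 0$, by possibly reducing $\varepsilon$ one can ensure, via a standard perturbation form of the inverse function theorem, that $\Phi_F$ remains a diffeomorphism on $B_\rho(0)$ and that $\Phi_F(0)=\int_F f_\gamma\dx$ lies in $B_{\rho'/2}\bigl(\int_{E_\gamma}f_\gamma\dx\bigr)$. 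This forces $\int_{E_\gamma}f_\gamma\dx$ to belong to the image of $\Phi_F$, which produces the required $v\in B_\rho(0)$ and concludes the proof.
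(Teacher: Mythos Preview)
Your proof is correct and follows essentially the same strategy as the paper's: apply the inverse function theorem to the translation map $v\mapsto\int_{F+v}f_\gamma\,\dx$, using that its differential at the origin for $F=E_\gamma$ is the Gram matrix $\bigl(\int_{\partial E_\gamma}\nu^i_{E_\gamma}\nu^j_{E_\gamma}\dh\bigr)_{ij}$, and then conclude by a perturbation argument in $F$. The only cosmetic difference is that the paper pulls the perturbed map back through a diffeomorphism $\Phi:E_\gamma\to F$ so as to work over the fixed domain $E_\gamma$, whereas you estimate $\Phi_F-\Phi_{E_\gamma}$ directly; both routes reach the same perturbative inverse-function conclusion.
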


\begin{proof}
\emph{Step 1.} Consider the function $\widetilde{T}:\R^N\rightarrow\R^N$ given by
$$
\widetilde{T}(v):=\int_E f_E(x-v)\mathrm{d}x\,.
$$
Then
$$
D\widetilde{T}(0)=-\int_E Df_E(x)\mathrm{d}x=-\int_{\partial E} \nu\otimes\nu\dh\,.
$$
By \eqref{eq:diag} we know that there exists an orthonormal frame respect to which
\[
\bigl(D\widetilde{T}(0)\bigr)_{ij}=-\int_{\partial E} \nu_i\nu_j\dh=0\,,
\]
if $i\neq j$. Assume $\nu_i\equiv0$ for all $i=1,\dots,k$, for some $k\in\{0,\dots,N-1\}$ (in the case $k=0$, it means that $\nu_i\not\equiv0$ for all $i=1,\dots,N$).
In particular, we have that $f^i_E=0$ for all $i=1,\dots,k$.
Thus, we can consider the following dimensional reduction: define the map $T:\R^{N-k}\to\R^{N-k}$ as
\[
T(v):=\mathrm{P}_{N-k} \left(\,\widetilde{T}(v) \,\right)\,,
\]
where $\mathrm{P}_{N-k}:\R^N\to\R^{N-k}$ is the projection onto the last $N-k$ coordinates.
Then, from the above computations, we get that the matrix $DT(0)$ is invertible.\\

\emph{Step 2.} Fix $\gamma\in(\bar{\gamma}-\widetilde{\gamma}, \bar{\gamma}+\widetilde{\gamma})$, where $\widetilde{\gamma}>0$ is the constant given by Lemma \ref{lem:w2p}.
Consider the map $T_\gamma:\R^{N-k}\to\R^{N-k}$ as
\[
T_\gamma(v):=\mathrm{P}_{N-k} \left(\, \int_{E_\gamma} f_E(x-v)\mathrm{d}x \,\right)\,.
\]
since in Step 1 we have seen that the matrix $D T(0)$ is invertible, it is possible to find $\widetilde{\gamma}>0$ small enough,
such that $D T_\gamma(0)$ in invertible.
This implies that there exist constants $\delta_1, \delta_2>0$ such that
$$
T_\gamma\bigl( B_{\delta_1}(0) \bigr) \supset B_{\delta_2}\bigl( T_\gamma(0) \bigr)\,.\\
$$

One can see that, for any $\varepsilon>0$ small enough, it is possible to find a constant $\widetilde{\varepsilon}>0$ with the following property: if $F\subset\Pi^N$ is such that $\mathrm{d}_{C^1}(E_\gamma,F)<\varepsilon$, then there exists a diffeomorphism $\Phi:E_\gamma\rightarrow F$ of class $C^1$ such that
$\|\Phi-\id\|_{C^1}<\widetilde{\varepsilon}$.
In particular it holds that $\widetilde{\varepsilon}\rightarrow0$ as $\varepsilon\rightarrow0$.

Let $F\subset\Pi^N$ as above and consider the map $T_\gamma^\Phi:\R^N\rightarrow\R^N$ given by
$$
\widetilde{T}_\gamma^\Phi(v):=\int_{E_\gamma} f_E\bigl( \Phi^{-1}(x)-v \bigr)J\Phi(x)\,\mathrm{d}x\,.
$$
For $\varepsilon>0$ small enough, we have that the $i^{th}$ component of
$\widetilde{T}_\gamma^\Phi(0)$ is zero, for all $i=1,\dots,k$.
Thus, as above, we can consider the map $T_\gamma^\Phi:\R^{N-k}\to\R^{N-k}$ via
\[
T_\gamma^\Phi(v):=\mathrm{P}_{N-k}\left(\, \widetilde{T}_\gamma^\Phi(v) \,\right)\,.
\]
Then
$$
DT^\Phi_\gamma(0)=-\int_{E_\gamma} D(\mathrm{P}_{N-k}f_E)\bigl(\Phi^{-1}(x) \bigr)J\Phi(x)\,\mathrm{d}x\,.
$$
Fixed $\mu>0$, there exists $\varepsilon>0$ such that
$$
\|DT_\gamma^\Phi(0) - DT_\gamma(0)\|_{C^{0}}\leq \mu\,,
$$
whenever $\mathrm{d}_{C^1}(E_\gamma,F)<\varepsilon$, and
$\gamma\in(\bar{\gamma}-\widetilde{\gamma}, \bar{\gamma}+\widetilde{\gamma})$.
This follows by using the fact that $\|\Phi-\id\|_{C^1}<\widetilde{\varepsilon}$ and by the uniform control on the $C^1$-norm of the functions $f_\gamma$'s.
Thus, $T_\gamma^\Phi$'s can be made uniformly close to $T_\gamma$ in the $C^1$ topology.

This implies that it is possible to find $\varepsilon>0$ such that if
$\mathrm{d}_{C^1}(E_\gamma,F)<\varepsilon$, then
\begin{equation}\label{eq:sup}
T_\gamma^\Phi(B_{\delta_1/2}(0))\supset B_{\delta_2/2}(T_\gamma^\Phi(0))\,.
\end{equation}
This follows, for instance, from the proof of the Inverse Function Theorem.\\

\emph{Step 3.} We can now easily conclude as follows: up to taking a smaller $\varepsilon$, we can suppose
$T_\gamma^\Phi(0)\in B_{\delta_2/4}(T(0))$, whenever $\mathrm{d}_{C^1}(E_\gamma,F)<\varepsilon$.
Thus, by \eqref{eq:sup}, we have that there exists $v\in B_{\delta_1/2}(0)$ such that $T_\gamma^\Phi(v)=T_\gamma(0)$.
This is exactly the statement we wanted to prove.\\
\end{proof}

\begin{lemma}\label{lem:minw2p}
Take $p>\max\{2,N-1\}$, $E\subset\Pi^N$ be a strictly stable critical point for $\np^{\bar{\gamma}}$, and let $\widetilde{\gamma}>0$, $\delta>0$ and $C>0$ be the constants given by Lemma \ref{lem:w2p}.
Then, for any
$\gamma\in(\bar{\gamma}-\widetilde{\gamma},\bar{\gamma}+\widetilde{\gamma})$ and $E_\gamma$ critical point for $\np^\gamma$ with $\mathrm{d}_{C^1}(E,E_\gamma)<\varepsilon$, we have that
$$
\np^\gamma(F)\geq \np^\gamma(E_\gamma) + C \bigl(\alpha(E_\gamma, F )\bigr)^2\,,
$$
for every set $F\subset\Pi^N$ with $|F|=|E_\gamma|$ and $\partial F=\{ x+\psi(x)\nu_{E_\gamma}(x) \;:\; x\in\partial E_\gamma \}$ for some
$\|\psi\|_{W^{2,p}(\partial E_\gamma)}\leq\delta$.
\end{lemma}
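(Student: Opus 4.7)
The plan is to reduce the statement to Lemma~\ref{lem:w2p} by exploiting the translation invariance of $\np^\gamma$ to kill the penalization term appearing in Definition~\ref{def:fpen}. Given $F$ satisfying the hypotheses, I will produce a small vector $v\in\R^N$ such that the translated set $F+v$ both (i)~annihilates the penalization of $\np^\gamma_{E_\gamma}$, and (ii)~remains a $W^{2,p}$-normal graph over $\partial E_\gamma$ with small height, so that Lemma~\ref{lem:w2p} applies directly to the pair $(E_\gamma,F+v)$.

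For step~(i), since $p>N-1$ the Sobolev embedding gives $\mathrm{d}_{C^1}(E_\gamma,F)\leq c\|\psi\|_{W^{2,p}(\partial E_\gamma)}\leq c\delta$. Hence, after possibly shrinking $\delta$, Lemma~\ref{lem:penug} produces $v\in\R^N$ with
$$
\int_{F+v}f_{E_\gamma}\dx=\int_{E_\gamma}f_{E_\gamma}\dx\,,
$$
and $|v|\to0$ as $\delta\to0$. In particular the penalization in Definition~\ref{def:fpen} vanishes on $F+v$, so that $\np^\gamma_{E_\gamma}(F+v)=\np^\gamma(F+v)$ and $\np^\gamma_{E_\gamma}(E_\gamma)=\np^\gamma(E_\gamma)$.

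For step~(ii), I would rewrite $\partial(F+v)=\{x+v+\psi(x)\nu_{E_\gamma}(x):x\in\partial E_\gamma\}$ as a normal graph over $\partial E_\gamma$ by composing with the smooth projection available in a tubular neighborhood of $\partial E_\gamma$; this produces a new height function $\psi_v\in W^{2,p}(\partial E_\gamma)$ whose norm tends to $0$ as $|v|+\|\psi\|_{W^{2,p}}\to0$. One further reduction of $\delta$ then places $\|\psi_v\|_{W^{2,p}}$ below the threshold of Lemma~\ref{lem:w2p}. Using also that $|F+v|=|F|=|E_\gamma|$, translation invariance of $\np^\gamma$ together with Lemma~\ref{lem:w2p} applied to the pair $(E_\gamma,F+v)$ yields
$$
\np^\gamma(F)=\np^\gamma(F+v)=\np^\gamma_{E_\gamma}(F+v)\geq\np^\gamma_{E_\gamma}(E_\gamma)+C|E_\gamma\triangle(F+v)|^2\,,
$$
and the desired conclusion then follows from $|E_\gamma\triangle(F+v)|\geq\alpha(E_\gamma,F)$, which is immediate from the very definition of $\alpha$ as a minimum over translations.

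The only delicate point is the parameter chasing in step~(ii): one must verify that the three smallness thresholds, coming respectively from Lemma~\ref{lem:penug}, from the graph reparametrization, and from Lemma~\ref{lem:w2p}, can all be met by a single choice of $\delta$ uniform in $\gamma\in(\bar\gamma-\widetilde\gamma,\bar\gamma+\widetilde\gamma)$. This reduces to checking that the functions $f_{E_\gamma}$ and their first derivatives are uniformly controlled, so that the constants $\delta_1,\delta_2$ implicit in the proof of Lemma~\ref{lem:penug} are themselves uniform in $\gamma$. This is, however, built into Lemmas~\ref{lem:f} and~\ref{lem:penug}, and together with the $C^{3,\beta}$-closeness $E_\gamma\to E$ provided by Lemma~\ref{lem:convc3} it closes the argument.
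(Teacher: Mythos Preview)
Your proposal is correct and follows essentially the same route as the paper: translate $F$ by the vector $v$ provided by Lemma~\ref{lem:penug} to kill the penalization, then apply Lemma~\ref{lem:w2p} to $F+v$ and conclude via translation invariance and the definition of $\alpha$. You are in fact more careful than the paper on two points it leaves implicit: the reparametrization of $\partial(F+v)$ as a $W^{2,p}$-normal graph over $\partial E_\gamma$ with small height, and the uniformity in $\gamma$ of the various thresholds.
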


\begin{proof}
Fix a number $\varepsilon\in(0,\widetilde{\varepsilon})$, where $\widetilde{\varepsilon}>0$ is the constant given by Lemma \ref{lem:penug}. Then we know that we can find a vector $v\in\R^N$ such that
$$
\mathrm{Pen}_{f_E,E_\gamma}(F+v)=0\,.
$$
Thus, by using the result of Lemma ~\ref{lem:w2p}, we can write
\begin{align*}
\np^\gamma(F) &= \np^\gamma(F+v) = \np^\gamma_{f_E,E_\gamma}(F+v)\geq \np^\gamma_{f_E,E_\gamma}(E_\gamma) + C |E_\gamma\triangle F|^2\\
&\geq \np^\gamma(E_\gamma) + C \bigl(\alpha(E_\gamma, F) \bigr)^2\,,
\end{align*}
where the last inequality follows from Definition \ref{def:alpha}.
\end{proof}

We now prove the uniform $L^\infty$-local minimality result, \emph{i.e.}, the uniform version of \cite[Theorem 4.3]{AFM}.

\begin{lemma}
Let $E\subset\Pi^N$ be a strictly stable critical point for $\np^{\bar{\gamma}}$.
Then there exist constants $\delta>0$, $\widetilde{\gamma}>0$ and $\varepsilon>0$ with the following property: for any $\gamma\in(\bar{\gamma}-\widetilde{\gamma},\bar{\gamma}+\widetilde{\gamma})$ and any $E_\gamma$ critical point for $\np^\gamma$ with $\mathrm{d}_{C^1}(E,E_\gamma)<\varepsilon$, it holds
$$
\np^\gamma(E_\gamma)\leq\np^\gamma(F)\,,
$$
for every set $F\subset\Pi^N$ with $|F|=|E_\gamma|$, such that $E_\gamma\triangle F\Subset \mathcal{N}_\delta(E_\delta)$, where
$\mathcal{N}_\delta(E_\gamma)$ is a tubular neighborhood of $\partial E_\gamma$ of thickness $\delta$.
\end{lemma}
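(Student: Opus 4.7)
The plan is to argue by contradiction and reduce to the $W^{2,p}$ local minimality already established in Lemma~\ref{lem:minw2p}. Suppose the statement fails. Then we find sequences $\gamma_n\to\bar\gamma$, critical points $E_{\gamma_n}$ of $\np^{\gamma_n}$ with $\mathrm{d}_{C^1}(E,E_{\gamma_n})\to 0$, positive numbers $\delta_n\to 0$ and sets $F_n\subset\Pi^N$ with $|F_n|=|E_{\gamma_n}|$, $F_n\triangle E_{\gamma_n}\Subset\mathcal{N}_{\delta_n}(E_{\gamma_n})$, such that
\[
\np^{\gamma_n}(F_n) < \np^{\gamma_n}(E_{\gamma_n})\,.
\]

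The first step is to replace each $F_n$ by a well-behaved competitor. For fixed $n$, let $\widetilde F_n$ be a solution of the constrained minimization problem
\[
\min\Bigl\{ \np^{\gamma_n}(G) \;:\; |G|=|E_{\gamma_n}|\,,\; G\triangle E_{\gamma_n}\Subset \mathcal{N}_{\delta_n}(E_{\gamma_n}) \Bigr\}\,,
\]
which exists by lower semicontinuity of the perimeter and continuity of the nonlocal term. Clearly $\np^{\gamma_n}(\widetilde F_n)\leq\np^{\gamma_n}(F_n)<\np^{\gamma_n}(E_{\gamma_n})$, and the $\widetilde F_n$'s have uniformly bounded perimeter (bounded by, e.g., $\np^{\bar\gamma}(E)+1$ for $n$ large). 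Invoking Lemma~\ref{lem:qm} we can remove the volume constraint at the price of a Lagrange multiplier $\Lambda$, uniform in $n$, and then Corollary~\ref{cor:qm} yields that the $\widetilde F_n$ are $(\omega,r)$-minimizers of the area functional with parameters $\omega,r$ not depending on $n$.

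Since $\widetilde F_n\triangle E\subset(\widetilde F_n\triangle E_{\gamma_n})\cup(E_{\gamma_n}\triangle E)$ and both pieces have vanishing measure (the first because $\widetilde F_n\triangle E_{\gamma_n}\Subset\mathcal{N}_{\delta_n}(E_{\gamma_n})$ and $\delta_n\to 0$, the second by $C^1$ convergence), we have $\alpha(\widetilde F_n,E)\to 0$. Theorem~\ref{thm:W} then gives that, for $n$ large, $\partial\widetilde F_n=\{x+\widetilde\psi_n(x)\nu_E(x):x\in\partial E\}$ with $\widetilde\psi_n\to 0$ in $C^{1,\beta}(\partial E)$ for every $\beta\in(0,1)$. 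By Lemma~\ref{lem:convc3} we also have $E_{\gamma_n}\to E$ in $C^{3,\beta}$, and composing the two graph representations we can write $\partial\widetilde F_n=\{x+\psi_n(x)\nu_{E_{\gamma_n}}(x):x\in\partial E_{\gamma_n}\}$ with $\psi_n\to 0$ in $C^{1,\beta}(\partial E_{\gamma_n})$.

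Now comes the upgrade to $W^{2,p}$. Each $\widetilde F_n$ is a critical point of the penalized functional obtained from Lemma~\ref{lem:qm}, hence on its smooth part satisfies an Euler--Lagrange equation of the form
\[
H_{\partial\widetilde F_n} + 4\gamma_n v^{\widetilde F_n} = \lambda_n + \Lambda_n\, s_n\quad \text{on } \partial\widetilde F_n\,,
\]
with $|\Lambda_n|\leq\Lambda$ and $s_n\in\{-1,0,+1\}$ the sign of the volume defect. Writing this equation in the local graph parametrization over $\partial E_{\gamma_n}$ yields a quasilinear elliptic PDE for $\psi_n$ with right-hand side bounded in $L^\infty$ (since $v^{\widetilde F_n}$ is uniformly bounded in $W^{2,p}$ by Remark~\ref{rem:nonloc}, and the Lagrange-type terms are bounded). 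Standard elliptic regularity applied to the linearization around the zero solution, together with the already-available $C^{1,\beta}$-smallness of $\psi_n$, upgrades the convergence to $\psi_n\to 0$ in $W^{2,p}(\partial E_{\gamma_n})$ for every $p<\infty$; the Lagrange multipliers $\lambda_n$ are controlled by the same integration-on-a-ball argument used in the proof of Lemma~\ref{lem:convc3}, and $\Lambda_n s_n\to 0$ because the volume defect vanishes in the limit.

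For $n$ sufficiently large we therefore have $\|\psi_n\|_{W^{2,p}(\partial E_{\gamma_n})}\leq\delta$ with $\delta$ the threshold of Lemma~\ref{lem:minw2p}, and $\gamma_n\in(\bar\gamma-\widetilde\gamma,\bar\gamma+\widetilde\gamma)$, $\mathrm{d}_{C^1}(E,E_{\gamma_n})<\varepsilon$. Lemma~\ref{lem:minw2p} then forces $\np^{\gamma_n}(\widetilde F_n)\geq\np^{\gamma_n}(E_{\gamma_n})$, which contradicts our assumption that $\np^{\gamma_n}(\widetilde F_n)\leq\np^{\gamma_n}(F_n)<\np^{\gamma_n}(E_{\gamma_n})$.

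The delicate point, and the one that forces the preparatory results of this section, is the uniformity in $\gamma_n$ and in the moving base critical point $E_{\gamma_n}$: the penalization constant from Lemma~\ref{lem:qm}, the $(\omega,r)$-minimality parameters from Corollary~\ref{cor:qm}, the $W^{2,p}$ elliptic estimate on $\psi_n$, and the $W^{2,p}$-minimality threshold $\delta$ of Lemma~\ref{lem:minw2p} all need to be chosen independently of $n$. The $C^{3,\beta}$-compactness of the family $\{E_{\gamma_n}\}$ provided by Lemma~\ref{lem:convc3} is exactly what makes these uniform choices possible.
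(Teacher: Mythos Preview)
Your argument follows the same contradiction--plus--selection strategy as the paper: pick a minimizer of an auxiliary problem inside the shrinking tubular neighborhood, show it is a uniform $(\omega,r)$-minimizer converging to $E$ in $C^{1,\beta}$, upgrade to $W^{2,p}$ via the Euler--Lagrange equation, and then contradict Lemma~\ref{lem:minw2p}. The overall architecture is correct and matches the paper.

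There is one technical discrepancy worth fixing. You set up the auxiliary problem with an \emph{exact} volume constraint and then invoke Lemma~\ref{lem:qm}/Corollary~\ref{cor:qm} to obtain uniform $(\omega,r)$-minimality. But Lemma~\ref{lem:qm} is formulated for minimizers in an $\alpha$-ball $\{G:\alpha(E,G)\le\delta\}$ with $\delta\ge\delta_0>0$ fixed, whereas your $\widetilde F_n$ only minimizes in the smaller (and shrinking) class $\{G:G\triangle E_{\gamma_n}\Subset\mathcal N_{\delta_n}(E_{\gamma_n})\}$; in particular the volume-fixing deformation in the proof of Lemma~\ref{lem:qm} need not preserve that tubular constraint, and the required lower bound $\delta\ge\delta_0$ fails since $|\mathcal N_{\delta_n}|\to 0$. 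The paper avoids this by minimizing the \emph{volume-penalized} functional
\[
\np^{\gamma_n}(F)+\Lambda\bigl||F|-|E_{\gamma_n}|\bigr|
\]
over $\{F:F\triangle E_{\gamma_n}\subset\overline{\mathcal N_{\delta_n}(E_{\gamma_n})}\}$ and then using the calibration argument from \cite[Theorem~4.3]{AFM}: since $\|\div(\nabla d_{E_{\gamma_n}})\|_{L^\infty}$ is uniformly bounded (by the $C^{3,\beta}$-closeness of $E_{\gamma_n}$ to $E$ from Lemma~\ref{lem:convc3}), a fixed $\Lambda$ forces the minimizers to have exactly the right volume and to be uniform $(4\Lambda,r_0)$-minimizers. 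Once you switch to this formulation, the rest of your proof goes through; note also that in your Euler--Lagrange equation the term $\Lambda_n s_n$ is then superfluous, since the minimizers already satisfy $|E_n|=|E_{\gamma_n}|$ and one has a single Lagrange multiplier $\lambda_n$.
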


\begin{proof}
Suppose for the sake of contradiction that there exist a sequence $\gamma_n\rightarrow\bar{\gamma}$, $E_{\gamma_n}\rightarrow E$ in $C^1$, with
$|E_\gamma|=|E|$, a sequence $\delta_n\rightarrow0$ and a sequence
of sets $F_n$ with $|F_n|=|E_{\gamma_n}|$, $E_\gamma\triangle F_n\Subset \mathcal{N}_\delta(E_{\gamma_n})$, such that
$$
\np^{\gamma_n}(E_{\gamma_n})>\np^{\gamma_n}(F_n)\,.
$$
Let $E_n$ be a solution of the following constrained minimum problem
$$
\min\bigl\{ \np^{\gamma_n}(F) + \Lambda\bigl| |F|-|E_\gamma| \bigr| \;:\; F\triangle E_\gamma \subset \mathcal{N}_\delta(E_{\gamma_n}) \bigr\}\,.
$$
By using the $C^{3,\beta}$ convergence of the $E_{\gamma_n}$'s to $E$, and reasoning as in the proof of \cite[Theorem 4.3]{AFM}, it is possible to find a constant $\Lambda>0$ independent of $\gamma_n$ such that the sets $E_n$'s are $(4\Lambda, r_0)$-minimizers of the area functional, for some $r_0>0$ independent of $\gamma_n$, and $|E_n|=|E_\gamma|$. This is because, if we set $\nu_n:=\nabla d_n$ (defined in $(\partial E)_\mu$, for some $\mu>0$), where $d_n$ is the signed distance from $E_n$, we have that $\|\div\,\nu_n\|_{L^\infty}\leq C$ for some constant $C>0$ independent of $n$.

Since $(E_n)_n$ is a sequence of uniform $(\omega, r)$-minimizers converging to $E$ in the $L^1$ topology, by Theorem \ref{thm:W} we have that, indeed,
$E_n\rightarrow E$ in the $W^{2,p}$-topology. By using again the $C^{3,\beta}$ convergence of the $E_{\gamma_n}$'s to $E$ and the Euler-Lagrange equation satisfied by each $E_n$, we obtain that $\mathrm{d}_{W^{2,p}}(E_n, E_{\gamma_n})\rightarrow0$ as $n\rightarrow\infty$. Since, by definition,
$\np^\gamma(E_n)<\np^\gamma(E_{\gamma_n})$we obtain a contradiction with the result of Lemma ~\ref{lem:minw2p}.
\end{proof}


\begin{proof}[Proof of Proposition~\ref{prop:unif}]
Suppose for the sake of contradiction that there exists a sequence $\gamma_n\rightarrow\bar{\gamma}$, $E_{\gamma_n}\rightarrow E$ in $C^1$, with
$|E_\gamma|=|E|$, a sequence $\delta_n\rightarrow0$ and a sequence of sets $F_n$ with $|F_n|=|E_{\gamma_n}|$, and $0<\varepsilon_n\rightarrow0$,
where $\varepsilon_n:=\alpha(F_n, E_{\gamma_n})$, such that
$$
\np^{\gamma_n}(F_n)\leq \np^{\gamma_n}(E_{\gamma_n})+\frac{C}{4}\bigl( \alpha(E_{\gamma_n},F_n) \bigr)^2\,.
$$
Let $E_n$ be a solution of the following constrained minimum problem
$$
\min\bigl\{ \np^{\gamma_n}(F) + \Lambda\sqrt{\bigl(\alpha(F,E_{\gamma_n})-\varepsilon_n\bigr)^2+\varepsilon_n} \;:\; |F|=|E_\gamma| \bigr\}\,.
$$
Then, by using a $\Gamma$-convergence argument it is possible to prove that the $E_n$'s converge (up to a subsequence) in the $L^1$ topology to a solution of the limiting
problem
$$
\min\bigl\{ \np^{\bar{\gamma}}(F) + \Lambda|\alpha(F,E)| \;:\; |F|=|E| \bigr\}\,.
$$
Reasoning as in the proof of \cite[Theorem 1.1]{AFM} and by using the $C^{3,\beta}$ convergence of the $E_{\gamma_n}$'s to $E$ (see Lemma \ref{lem:convc3}), it is possible to prove that there exists a constant $\Lambda>0$ such that the unique solution of the limiting problem is $E$ itself.
Moreover, by reasoning again as in the proof of \cite[Theorem 1.1]{AFM} and using Lemma~\ref{lem:qm}, we can also infer that $(E_n)_n$ is a sequence of uniform
$(\omega,r)$-minimizers, and that $E_n\rightarrow E$ in the $W^{2,p}$-topology. Thus, $\mathrm{d}_{W^{2,p}}(E_n, E_{\gamma_n})\rightarrow0$ as $n\rightarrow\infty$.
Using the previous uniform $L^\infty$-local minimality result is it also possible to prove that
$\frac{\alpha(E_n,E_{\gamma_n})}{\alpha(F_n,E_{\gamma_n})}\rightarrow 1$ (see \cite[equation (4.17)]{AFM}).
Thus, we may conclude
\begin{align*}
\np^{\gamma_n}(E_n)&\leq \np^{\gamma_n}(F_n)\leq \np^{\gamma_n}(E_{\gamma_n})+\frac{C}{4}\bigl( \alpha(E_{\gamma_n},F_n) \bigr)^2\\
&\leq\np^{\gamma_n}(E_{\gamma_n})+\frac{C}{2}\bigl( \alpha(E_{\gamma_n},E_n) \bigr)^2\,.
\end{align*}
This yelds the contradiction with the result of Lemma~\ref{lem:minw2p}.\\
\end{proof}


\subsection{Continuous family of local minimizers}

We now prove a uniqueness result for critical points of $\np^\gamma$ close enough to a regular critical stable point of the area functional.
We also prove that these critical points are isolated local minimizers.

\begin{proposition}\label{prop:uniflocmin}
Let $\bar{\gamma}\geq0$ and let $E\subset\Pi^N$ be a strictly stable critical point for $\np^{\bar{\gamma}}$.
Then there exist constants $\widetilde{\gamma}>0$ and $\varepsilon>0$ and a unique family
$\gamma\mapsto E_\gamma$, for
$\gamma\in(\bar{\gamma}-\widetilde{\gamma},\bar{\gamma}+\widetilde{\gamma})$, with $|E_\gamma|=|E|$, such that
\begin{itemize}
\item $\mathrm{d}_{C^1}(E_\gamma, E)<\varepsilon$,
\item $E_\gamma$ is a critical point for $\np^\gamma$.
\end{itemize}
Moreover $\gamma\mapsto E_\gamma$ is continuous in $C^{3,\beta}$, for all $\beta\in(0,1)$, and $E_\gamma$ is an isolated local minimizer of $\np^\gamma$.
\end{proposition}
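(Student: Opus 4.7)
\emph{Strategy.} The plan is purely variational: construct the family $(E_\gamma)_\gamma$ by solving a suitable constrained minimization problem near $E$, and then extract uniqueness, isolated local minimality, and continuity from the uniform stability estimate of Proposition~\ref{prop:unif} combined with the $C^{3,\beta}$-upgrade of Lemma~\ref{lem:convc3}.

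\emph{Existence and regularity.} Let $C_0, \delta_0 > 0$ be the constants of Theorem~\ref{thm:locmin} applied to $E$, so that $\np^{\bar{\gamma}}(F) \geq \np^{\bar{\gamma}}(E) + C_0 \alpha(E,F)^2$ whenever $|F|=|E|$ and $\alpha(E,F) \leq \delta_0$. For each $\gamma$ near $\bar{\gamma}$, let $E_\gamma$ solve
\begin{equation*}
\min\bigl\{ \np^\gamma(F) \,:\, |F|=|E|,\; \alpha(E,F)\leq \delta_0 \bigr\},
\end{equation*}
whose existence follows from the direct method. Writing $\np^\gamma = \np^{\bar{\gamma}} + (\gamma-\bar{\gamma})\nl$, the inequality $\np^\gamma(E_\gamma) \leq \np^\gamma(E)$ combined with Proposition~\ref{prop:lip} yields
\begin{equation*}
C_0\, \alpha(E,E_\gamma)^2 \leq (\gamma-\bar{\gamma})\bigl(\nl(E)-\nl(E_\gamma)\bigr) \leq |\gamma-\bar{\gamma}|\, c_0\, \alpha(E,E_\gamma),
\end{equation*}
so $\alpha(E,E_\gamma) \leq (c_0/C_0)|\gamma-\bar{\gamma}|$. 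For $|\gamma-\bar{\gamma}|$ small, this places $E_\gamma$ strictly inside the constraint; hence $E_\gamma$ is an unconstrained local minimizer of $\np^\gamma$ and, in particular, a critical point satisfying $H_{\partial E_\gamma} + 4\gamma v^{E_\gamma} = \lambda_\gamma$. By Corollary~\ref{cor:qm} the family is uniformly $(\omega,r)$-minimizing, so Theorem~\ref{thm:W} promotes $L^1$-convergence to $C^{1,\beta}$-convergence $E_\gamma \to E$; Lemma~\ref{lem:convc3} then upgrades this to $C^{3,\beta}$-convergence. In particular $\dist_{C^1}(E,E_\gamma) < \varepsilon$ once $|\gamma-\bar{\gamma}|$ is small enough.

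\emph{Uniqueness, isolated minimality, continuity.} Shrink $\varepsilon$ and $\widetilde{\gamma}$ so that Proposition~\ref{prop:unif} applies at each such $E_\gamma$ and any two critical points $E_\gamma, E'_\gamma$ of $\np^\gamma$ in the $\varepsilon$-$C^1$-neighborhood of $E$ with volume $|E|$ automatically obey $\alpha(E_\gamma, E'_\gamma) \leq \delta$, where $\delta$ is the threshold there. Applying the quadratic lower bound to the pair $(E_\gamma, E'_\gamma)$ and symmetrically to $(E'_\gamma, E_\gamma)$ and adding yields $2C\alpha(E_\gamma, E'_\gamma)^2 \leq 0$, forcing $E_\gamma = E'_\gamma$ (up to the translation built into $\alpha$). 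Isolated local minimality of $E_\gamma$ is then Proposition~\ref{prop:unif} itself. For continuity, given $\gamma_n \to \gamma$, the uniform $C^{3,\beta}$-bound on $(E_{\gamma_n})_n$ together with Arzel\`a-Ascoli yields a $C^{3,\beta'}$-subsequential limit $\widetilde{E}$ for any $\beta' < \beta$; passing to the limit in the Euler-Lagrange equation (using $C^1$-continuity of $F\mapsto v^F$ via standard elliptic estimates) shows $\widetilde{E}$ is a critical point of $\np^\gamma$ close to $E$ in $C^1$, so by uniqueness $\widetilde{E}=E_\gamma$, and the full family is continuous in $C^{3,\beta}$.

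\emph{Main obstacle.} The delicate point is forcing the constrained minimizer to live strictly in the interior of the $\alpha$-ball of radius $\delta_0$ around $E$. The whole argument hinges on the quadratic control of Theorem~\ref{thm:locmin} beating the linear-in-$|\gamma-\bar{\gamma}|$ Lipschitz error of Proposition~\ref{prop:lip}; without this, $E_\gamma$ could stick to the boundary of the constraint and fail to be a critical point of the unconstrained energy, breaking the entire subsequent chain.
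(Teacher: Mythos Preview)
Your argument is correct, and differs from the paper's mainly in the existence step. The paper produces $E_\gamma$ abstractly via the Kohn--Sternberg $\Gamma$-convergence principle (Theorem~\ref{thm:KS}, used with $\gamma\to\bar\gamma$ in place of $\gamma\to 0$): since $E$ is an isolated local minimizer of $\np^{\bar\gamma}$ and $\np^\gamma\stackrel{\Gamma}{\to}\np^{\bar\gamma}$, there exist local minimizers $E_\gamma$ of $\np^\gamma$ with $\alpha(E_\gamma,E)\to 0$. You instead solve the constrained problem $\min\{\np^\gamma(F):|F|=|E|,\ \alpha(E,F)\le\delta_0\}$ directly and use the quantitative estimate $C_0\alpha(E,E_\gamma)^2\le c_0|\gamma-\bar\gamma|\,\alpha(E,E_\gamma)$ to force the minimizer strictly inside the constraint. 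Your route is more elementary (it bypasses Theorem~\ref{thm:KS} entirely) and yields the explicit rate $\alpha(E,E_\gamma)\le (c_0/C_0)|\gamma-\bar\gamma|$, which the paper's soft argument does not give; the paper's route, on the other hand, would generalize more readily to perturbations that are not linear in a scalar parameter.

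The uniqueness and continuity steps are essentially the same in both proofs. Your formulation of uniqueness, adding the two quadratic inequalities from Proposition~\ref{prop:unif} to get $2C\alpha(E_\gamma,E'_\gamma)^2\le 0$, is a cleaner packaging of the paper's ``two isolated local minimizers within each other's radius'' contradiction. For continuity the paper goes $L^1\to C^{1,\beta}$ via Theorem~\ref{thm:W} and then to $C^{3,\beta}$ via Lemma~\ref{lem:convc3}, while you go through Arzel\`a--Ascoli on the uniform $C^{3,\beta}$ bound and pass to the limit in the Euler--Lagrange equation; both are valid, and yours makes the identification of the limit with $E_\gamma$ via uniqueness slightly more transparent.
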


\begin{proof}
\emph{Step 1: existence of the family}. First of all we notice that, by Theorem ~\ref{thm:locmin}, we can find a constant $\delta>0$ such that
$$
\np^{\bar{\gamma}}(E)<\np^{\bar{\gamma}}(F)\,,
$$
for any set $F\subset\Pi^N$ with $|F|=|E|$, such that $0<\alpha(E,F)<\delta$.
Then it is possible to use Theorem ~\ref{thm:KS} to find a sequence $(E_\gamma)_\gamma$, with $|E_\gamma|=|E|$, such that
$E_\gamma$ is a local minimizer of $\np^\gamma$, and $\alpha(E_\gamma,E)\rightarrow0$ as $\gamma\rightarrow\bar{\gamma}$.

By using Corollary ~\ref{cor:qm}, we infer that the sequence $(E_\gamma)_\gamma$ is a sequence of $(\omega_0, r_0)$-minimizers, where the parameter $\omega_0$ can be chosen uniformly with respect to $\gamma$
(see Lemma ~\ref{lem:qm}).
Hence, Theorem ~\ref{thm:W} allows to say that the $E_\gamma$'s actually converge to $E$ in the $C^{1,\beta}$-topology.\\

\emph{Step 2: uniqueness of the family}. Let $\varepsilon_0>0$ and $\gamma_0>0$ be the constants given by Proposition \ref{prop:unif}, and
take $\varepsilon<\varepsilon_0$ and $\widetilde{\gamma}<\gamma_0$ such that
$$
\mathrm{d}_{C^1}(E_\gamma,E)<\varepsilon\,,
$$
for any $\gamma\in(\bar{\gamma}-\widetilde{\gamma},\bar{\gamma}+\widetilde{\gamma})$.
By Proposition ~\ref{prop:unif} there exists $\delta>0$ such that the $E_\gamma$'s are uniform local minimizers with respect to sets $F$ with $|F|=|E_\gamma|$ with
$\alpha(F,E_\gamma)\leq\delta$. In particular, we have that
$$
\np^\gamma(E_\gamma)<\np^\gamma(F)\,,
$$
for any set $F\neq E_\gamma$ with $|F|=|E_\gamma|$ and $\alpha(F,E_\gamma)\leq\delta$.

By taking a smaller $\varepsilon$ (and a smaller $\widetilde{\gamma}$) if necessary, we can assume that
$$
\mathrm{d}_{C^1}(F,E)<\varepsilon \,\Rightarrow\, \alpha(F,E_\gamma)\leq\delta\,,
$$
for any set $F\subset\Pi^N$ and any $\gamma\in(\bar{\gamma}-\widetilde{\gamma},\bar{\gamma}+\widetilde{\gamma})$.
This allows to infer that $E_\gamma$ is the unique critical point of $\np^\gamma$ with $|E_\gamma|=|E|$ and $\mathrm{d}_{C^1}(E_\gamma,E)<\varepsilon$. Indeed, if $F$ is another critical point of $\np^\gamma$, with $|F|=|E|$ with $\mathrm{d}_{C^1}(F,E)<\varepsilon$, by using again
Proposition~\ref{prop:unif}, we would obtain that $F$ is an isolated local minimizer of $\np^\gamma$ with respect to sets $G$ with $|G|=|F|$ and
$\alpha(G,F)\leq\delta$. But this is in contradiction with the isolated local minimality property of $E_\gamma$.\\

\emph{Step 3: continuity}. Finally, we can deduce the continuity in the $C^{3,\beta}$-topology of the family $\gamma\mapsto E_\gamma$ as follows:
fix $\gamma\in(\bar{\gamma}-\widetilde{\gamma},\bar{\gamma}+\widetilde{\gamma})$, and let $\gamma_n\rightarrow\gamma$.
Then, up to a subsequence, there exists a set $F\subset\Pi^N$ such that $E_{\gamma_n}\rightarrow F$ in the $L^1$ topology. By the uniqueness property just proved, we have that $F=E_\gamma$.

Moreover, since $(E_{\gamma_n})_n$ is a sequence of uniform $(\omega, r_0)$-minimizers, we can use Lemma~\ref{thm:W} to infer that $E_{\gamma_n}\rightarrow F$ in the $C^{1,\beta}$ topology.
Thus, by using Lemma \ref{lem:convc3} we obtain the convergence of $E_{\gamma_n}$ to $E_\gamma$ in the $C^{3,\beta}$-topology.
\end{proof}


\subsection{Periodic local minimizers with almost constant mean curvature}

The main result of this chapter is the following.

\begin{theorem}\label{thm:main}
Let $E\subset\Pi^N$ be a smooth set that is critical and strictly stable for the area functional, \emph{i.e.}, there exists $\lambda\in\R$ such that
$$
H_{\partial E}=\lambda\quad\quad\text{ on } \partial E\,,
$$
and
$$
\int_{\partial E} \bigl( |D_{\tau}\varphi|^2-|B_{\partial E}|^2\varphi^2 \bigl)\dh>0\quad\quad
                 \text{ for every } \varphi\in T^\bot(\partial E)\meno\{0\}\,.
$$
Fix constants $\bar{\gamma}>0$, $\varepsilon>0$.
Then it is possible to find $\bar{k}=\bar{k}(\bar{\gamma}, \varepsilon)\in\N$ and $C=C(\bar{\gamma})>0$ such that for all $k\geq\bar{k}$ there exists a unique set $F\subset\Pi^N$ that is $1/k$-periodic and with
\begin{itemize}
\item $\dist_{C^0}(F, E^k)<\frac{\varepsilon}{k}$, where $E^k$ is as Definition~\ref{def:per},
\item $\dist_{C^1}(F, E^k)<\varepsilon$,
\item $\|\nabla_\tau H_F\|_{L^\infty(\partial F)}<\frac{C}{k}$, where $H_F$ is the mean curvature of $\partial F$.
\end{itemize}
Moreover $F$ is an isolated local minimizer of $\np^{\bar{\gamma}}$ with respect to $1/k$-periodic sets, \emph{i.e.}, there exists $\delta>0$ such that,
for any set $G\subset\Pi^N$ that is $1/k$-periodic and with $|G|=|F|$, it holds
$$
\np^{\bar{\gamma}}(F)<\np^{\bar{\gamma}}(G)\,,
$$
whenever $0<\alpha(G,F)\leq\delta$.
\end{theorem}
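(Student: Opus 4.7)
The plan is to reduce everything to a direct application of Proposition~\ref{prop:uniflocmin} via the scaling identity of Lemma~\ref{lem:Ek}. Setting $\gamma_k:=\bar\gamma/k^3$, a direct combination of the two identities in Lemma~\ref{lem:Ek} gives $\np^{\bar\gamma}(H^k)=k\,\np^{\gamma_k}(H)$ for every $H\subset\Pi^N$, where $H^k$ is viewed as a $1/k$-periodic subset of $\Pi^N$. In particular, $H^k$ is a critical point of $\np^{\bar\gamma}$ if and only if $H$ is a critical point of $\np^{\gamma_k}$, and the analogous equivalence holds for isolated local minimality among $1/k$-periodic competitors, once one observes that $\alpha(H_1^k,H_2^k)=\alpha(H_1,H_2)$: indeed, $H_1^k$ and $H_2^k$ are both $1/k$-periodic, so their symmetric difference (after optimal translation) is $1/k$-periodic too, and the change of variables $y=kx$ cancels the factors of $k^N$ coming from the restriction to one period cell and from the rescaling of the translation parameter.

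Since $E$ is critical and strictly stable for $\np^0=\per_{\Pi^N}$, Proposition~\ref{prop:uniflocmin} applied with $\bar\gamma=0$ yields constants $\widetilde\gamma,\varepsilon_0>0$ and a $C^{3,\beta}$-continuous family $\gamma\mapsto E_\gamma$ of isolated local minimizers of $\np^\gamma$ with $E_0=E$ and $\dist_{C^1}(E_\gamma,E)<\varepsilon_0$ for $\gamma\in[0,\widetilde\gamma)$. I choose $\bar k=\bar k(\bar\gamma,\varepsilon)$ large enough that $\gamma_k<\widetilde\gamma$ and so that the continuity of the family forces $\|\psi_k\|_{C^1(\partial E)}<\varepsilon/2$ for all $k\geq\bar k$, where $\psi_k$ is the normal graph function defining $E_{\gamma_k}$ over $E$. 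Then I set $F:=(E_{\gamma_k})^k$. Rescaling the graph, $\partial F$ is the normal graph of $\widetilde\psi_k(y):=k^{-1}\psi_k(ky)$ over $\partial E^k$, which immediately yields $\|\widetilde\psi_k\|_{C^0}=k^{-1}\|\psi_k\|_{C^0}<\varepsilon/k$ and $\|\nabla\widetilde\psi_k\|_{C^0}=\|\nabla\psi_k\|_{C^0}<\varepsilon$, that is, exactly the $C^0$ and $C^1$ bounds in the statement.

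For the mean-curvature estimate, differentiating tangentially the Euler-Lagrange equation satisfied by $E_{\gamma_k}$ gives $\nabla_\tau H_{\partial E_{\gamma_k}}=-4\gamma_k\,\nabla_\tau v^{E_{\gamma_k}}$ on $\partial E_{\gamma_k}$. The uniform $W^{2,p}$-bound on $v^{E_{\gamma_k}}$ recalled in Remark~\ref{rem:nonloc}, combined with the Sobolev embedding $W^{2,p}\hookrightarrow C^{1,\alpha}$ for $p>N$, yields $\|\nabla v^{E_{\gamma_k}}\|_{L^\infty(\Pi^N)}\leq C$ independently of $k$, and hence $\|\nabla_\tau H_{\partial E_{\gamma_k}}\|_{L^\infty}\leq C\bar\gamma/k^3$. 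Under the rescaling $F=(E_{\gamma_k})^k$ curvature scales as $H_{\partial F}(y)=k\,H_{\partial E_{\gamma_k}}(ky)$, and a tangential derivative contributes another factor $k$, so $\|\nabla_\tau H_{\partial F}\|_{L^\infty(\partial F)}=k^2\|\nabla_\tau H_{\partial E_{\gamma_k}}\|_{L^\infty}\leq C(\bar\gamma)/k$, as desired.

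Isolated local minimality and uniqueness are then read off from the correspondence recorded in the first paragraph. Any $1/k$-periodic competitor $G\subset\Pi^N$ with $|G|=|F|$ and $0<\alpha(F,G)\leq\delta$ writes as $G=H^k$ with $|H|=|E_{\gamma_k}|$ and $0<\alpha(H,E_{\gamma_k})\leq\delta$; choosing $\delta$ below the threshold of Proposition~\ref{prop:uniflocmin} gives $\np^{\gamma_k}(H)>\np^{\gamma_k}(E_{\gamma_k})$, which after multiplication by $k$ becomes $\np^{\bar\gamma}(G)>\np^{\bar\gamma}(F)$. For uniqueness, a $1/k$-periodic critical point $F'$ of $\np^{\bar\gamma}$ satisfying the three bounds of the statement corresponds, via $\psi(x)=k\,\widetilde\psi(x/k)$, to a critical point $H$ of $\np^{\gamma_k}$ with $\|\psi\|_{C^1(\partial E)}\leq 2\varepsilon$; for $\varepsilon$ small enough this places $H$ in the $C^1$-uniqueness neighborhood of Proposition~\ref{prop:uniflocmin}, forcing $H=E_{\gamma_k}$ and hence $F'=F$. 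The only subtle point in the whole argument is recognizing that the $C^0$-scale $\varepsilon/k$ (rather than $\varepsilon$) in the statement is exactly what is needed to undo the rescaling and transfer $C^1$-closeness of $F'$ to $E^k$ back to $C^1$-closeness of $H$ to $E$; without this finer $C^0$ hypothesis one could not rule out other $1/k$-periodic critical points close only in a coarser $C^1$ sense.
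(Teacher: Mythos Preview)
Your proof is correct and follows essentially the same approach as the paper: both apply Proposition~\ref{prop:uniflocmin} at $\bar\gamma=0$ to obtain the family $E_{\gamma_k}$ with $\gamma_k=\bar\gamma/k^3$, set $F:=(E_{\gamma_k})^k$, and then transfer closeness, criticality, and isolated local minimality through the scaling identity of Lemma~\ref{lem:Ek}. Your treatment is in fact somewhat more explicit than the paper's in tracking the $\alpha$-distance correspondence and the uniqueness argument (including the observation that the $C^0$-scale $\varepsilon/k$ is precisely what is needed to pull uniqueness back through the rescaling), and you bound $\|\nabla v^{E_{\gamma_k}}\|_{L^\infty}$ via the uniform $W^{2,p}$-estimate of Remark~\ref{rem:nonloc} rather than the $C^{1,\beta}$-convergence the paper invokes, but these are cosmetic differences within the same strategy.
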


\begin{proof}
Consider the sequence
$$
(\gamma_k)_k:=(\bar{\gamma}k^{-3})_{k\in\N\setminus\{0\}}\,.
$$
Let $\gamma_k\mapsto E_{\gamma_k}$ be the unique family provided by Proposition~\ref{prop:uniflocmin} applied to $E$.
Take $\bar{k}$ such that, for all $k\geq\bar{k}$, $\dist_{C^1}(E_{\gamma_k}, E)<\varepsilon$ and $E_{\gamma_k}$ is an isolated local minimizer of
$\np^{\gamma_k}$. This can be done by using the results of Proposition~\ref{prop:uniflocmin}. Let $F:=E^k_{\gamma_{k}}$. Now, it is easy to see that
$$
\dist_{C^0}(F, E^k) = \frac{1}{k}\dist_{C^0}(E_{\gamma_k}, E)<\frac{\varepsilon}{k}\,,\quad \quad
          \dist_{C^1}(F, E^k) = \dist_{C^1}(E_{\gamma_k}, E) <\varepsilon\,.
$$
Moreover, by \eqref{eq:per} and \eqref{eq:pfk}, we have that
$$
\np^{\bar{\gamma}}(F) = k^N\np_k^{\bar{\gamma}}(E_{\gamma_{k}}) =
               k\bigl[ \per_{\Pi^N}(E_{\gamma_k}) + \gamma_k\nl_{\Pi^N}(E_{\gamma_k}) \bigr] = k\np^{\gamma_k}_{\Pi^N}(E_{\gamma_k})\,.
$$
Since $E_{\gamma_k}$ is an isolated local minimizer for $\np^{\gamma_k}$, we obtain that $F$ satisfied the isolated local minimimality property of the theorem.

Finally, we have that
$$
H_{\partial F}(x) = k H_{\partial E_{\gamma_k}}(kx) = k \bigl( \lambda_k - 4\gamma_k v^{E_{\gamma_k}}(kx) \bigr)\,,
$$
where in the last step we have used the Euler-Lagrange equation satisfied by $E_{\gamma_k}$. Thus, using the definition of $\gamma_k$, we obtain that
$$
\|\nabla_\tau H_F\|_{L^\infty(\partial F)} \leq \frac{4\bar{\gamma}}{k}\|\nabla v^{E_{\gamma_k}}\|_{L^{\infty}(\partial E_{\gamma_k})}\,.
$$
Since $v^{E_{\gamma_k}}\rightarrow v^E$ in $C^{1,\beta}$, up to choose a bigger $\bar{k}$, we also have the desired estimate for
$\|\nabla_\tau H_F\|_{L^\infty(\partial F)}$.
\end{proof}

\begin{figure}[H]
\includegraphics[scale=0.35]{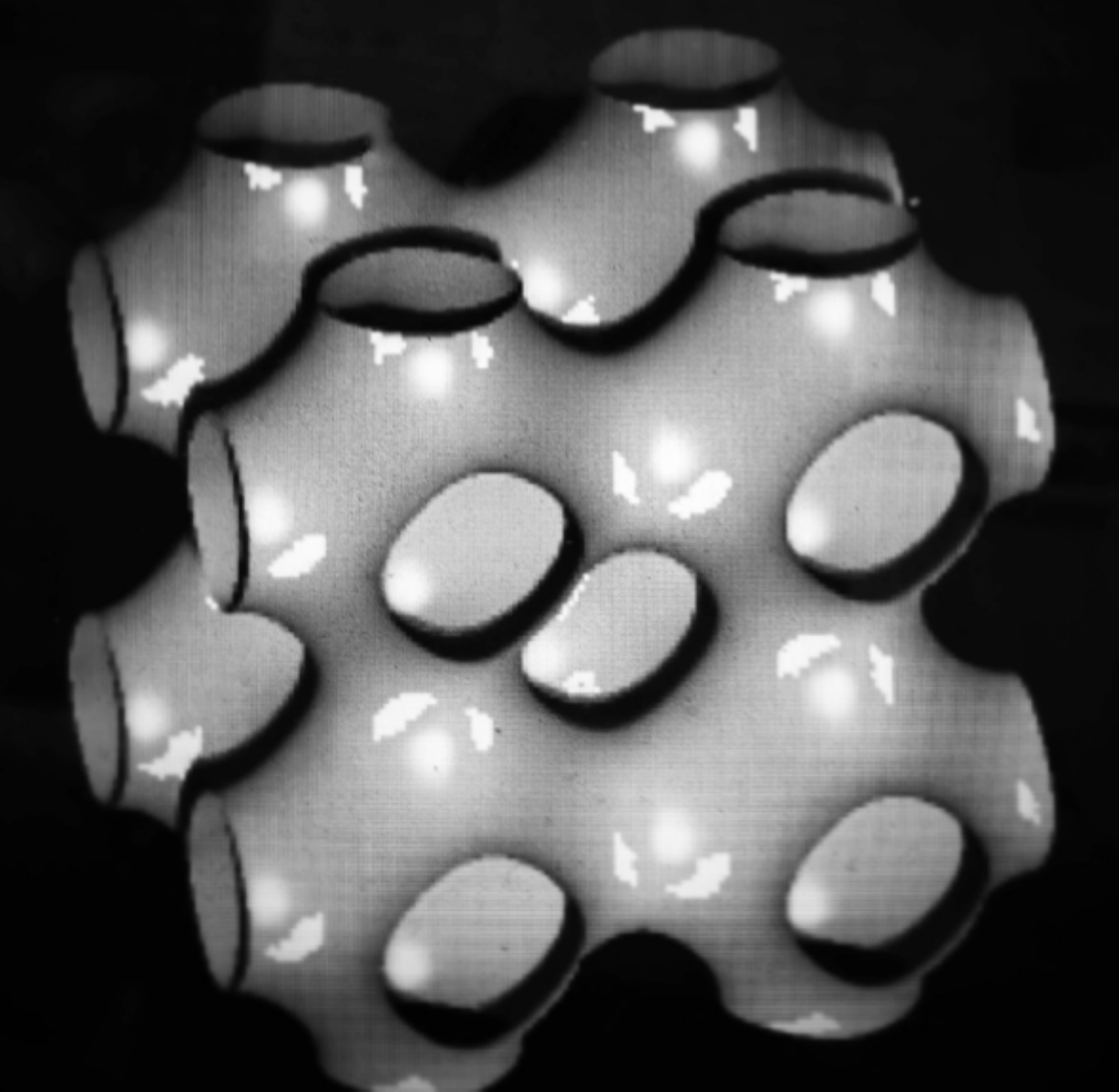}
\caption{An example of a strictly stable periodic surface with constant mean curvature.}
\end{figure} 

We finally show that the critical points constructed in the above theorem can be approximated with local minimizers of the
$\varepsilon$-diffuse energy $OK^{\bar{\gamma}}_\varepsilon$.

\begin{corollary}\label{prop:approx}
Let $E\subset\Pi^N$ be as in the previous theorem, and let $F$ be a periodic critical point constructed above.
Define the function $u:=\chi_F-\chi_{\Pi^N\meno F}$.
Then there exist a constant $\bar{\varepsilon}>0$ and a family
$(u_\varepsilon)_{\varepsilon\in(0,\bar{\varepsilon})}$ such that
\begin{itemize}
\item $u_\varepsilon$ is a local minimizer of $OK^{\bar{\gamma}}_\varepsilon$,
\item $\int_{\Pi^N}u_\varepsilon = \int_{\Pi^N}u$,
\item $u_\varepsilon\rightarrow u$ in $L^1(\Pi^N)$ as $\varepsilon\rightarrow0$.
\end{itemize}
\end{corollary}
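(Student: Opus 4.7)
The plan is to implement a Kohn--Sternberg-type $\Gamma$-convergence scheme (\emph{cf.}\ Theorem~\ref{thm:KS}) now adapted to the diffuse-to-sharp convergence and restricted to the natural class of $1/k$-periodic competitors singled out by Theorem~\ref{thm:main}.

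First, I would recall the classical Modica--Mortola result, combined with the Lipschitz continuity of the nonlocal term from Proposition~\ref{prop:lip}, to obtain the following: on the quotient space $X_k$ of $1/k$-periodic $L^1$-functions with prescribed mean $m:=\int_{\Pi^N}u\,dx$ (taken modulo translations, as in the definition of $\widetilde{\np}^{\bar\gamma}$), the rescaled family $OK^{\bar\gamma}_\varepsilon/\sigma_0$ $\Gamma$-converges in the $L^1$-topology to $\widetilde{\np}^{\bar\gamma}$, and it is equicoercive there (where $\sigma_0$ is the Modica--Mortola constant). The equicoercivity follows from the non-negativity of the nonlocal part together with the standard $L^1$-compactness granted by the Modica--Mortola Dirichlet-plus-potential contribution; the $\Gamma$-$\liminf$ is classical; the $\Gamma$-$\limsup$ is produced by the usual one-dimensional optimal profile across $\partial F$, kept $1/k$-periodic by construction and mass-corrected by a vanishing perturbation.

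Second, by Theorem~\ref{thm:main} there exists $\delta_0>0$ such that $u$ is the unique minimizer of $\widetilde{\np}^{\bar\gamma}$ on the closed set
$$
K:=\{v\in X_k\;:\;\alpha(v,u)\leq\delta_0\}.
$$
For each small $\varepsilon>0$, the direct method applied to $OK^{\bar\gamma}_\varepsilon$ on $K$, using $L^1$-lower semicontinuity and the equicoercivity recalled above, produces a minimizer $u_\varepsilon\in K$. Up to subsequences, $u_\varepsilon\to u^\star\in K$ in $L^1$, and the fundamental theorem of $\Gamma$-convergence (using the $\Gamma$-$\limsup$ applied to $u$ itself as a test configuration, combined with the $\Gamma$-$\liminf$) implies that $u^\star$ minimizes $\widetilde{\np}^{\bar\gamma}$ on $K$. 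Isolated local minimality forces $u^\star=u$, so $\alpha(u_\varepsilon,u)<\delta_0$ for $\varepsilon$ small, placing $u_\varepsilon$ in the $L^1$-interior of $K$; hence $u_\varepsilon$ is a local minimizer of $OK^{\bar\gamma}_\varepsilon$ among $1/k$-periodic functions with the prescribed mean, and its regularity follows from elliptic regularity applied to the associated Euler--Lagrange equation.

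The main obstacle, and the step that deserves the most care, is the construction of a $1/k$-periodic mass-preserving recovery sequence for $u$ in the $\Gamma$-$\limsup$ step: the optimal transition profile is built tubularly around the smooth periodic surface $\partial F$ (using a tubular neighborhood of the type introduced in Definition~\ref{def:tn}, whose thickness must be taken small relative to $1/k$), and one must verify that the resulting competitor can be corrected by a vanishing perturbation to satisfy the hard mass constraint while preserving the $1/k$-periodicity and not increasing the asymptotic energy. A secondary subtlety is reconciling ``local minimizer of $OK^{\bar\gamma}_\varepsilon$'' in the statement with the notion provided by the argument above: the construction naturally gives local minimality in the $1/k$-periodic class, which is the analogue of the notion used in Theorem~\ref{thm:main} and should match the intended reading of the corollary.
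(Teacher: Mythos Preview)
Your proposal is correct and takes essentially the same approach as the paper: the paper's proof is a one-line invocation of Kohn and Sternberg's theorem, using the $\Gamma$-convergence of $OK^{\bar\gamma}_\varepsilon$ to $\np^{\bar\gamma}$ together with the isolated local minimality of $F$ among $1/k$-periodic competitors, and what you have written is precisely an unpacked version of that argument. Your observation that the resulting local minimality is naturally in the $1/k$-periodic class is exactly the reading the paper intends.
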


\begin{proof}
The proof follows by Kohn and Sternberg's theorem (see \cite{KohSter} and also \cite[Proposition 8]{ChoSte1}), thanks to the $\Gamma$-convergence of $OK^{\bar{\gamma}}_\varepsilon$ to $\np^{\bar{\gamma}}$ and using the fact that $F$ is an isolated local minimizer with respect to
$1/k$-periodic perturbations.
\end{proof}

\begin{remark}\label{rem:strngemin}
The local minimality provided in Theorem \ref{thm:main} requires the variations to be $1/k$-periodic.
But actually, it is possible to prove a slightly more general local minimality property holds true.
The statement is the following:

let $E\subset\Pi^N$ be a smooth set that is critical and strictly stable for the area functional.
Fix constants $\bar{\gamma}>0$, $\varepsilon>0$.
Then it is possible to find $\bar{k}=\bar{k}(\bar{\gamma}, \varepsilon)\in\N$ and $C=C(\bar{\gamma})>0$ such that for all $k\geq\bar{k}$ there exists a unique set $F\subset\Pi^N$ that is $1/k$-periodic and with
\begin{itemize}
\item $\dist_{C^0}(F, E^k)<\frac{\varepsilon}{k}$, where $E^k$ is as Definition~\ref{def:per},
\item $\dist_{C^1}(F, E^k)<\varepsilon$,
\item $\|\nabla_\tau H_F\|_{L^\infty(\partial F)}<\frac{C}{k}$, where $H_F$ is the mean curvature of $\partial F$.
\end{itemize}

Moreover the following isolated local minimality property holds true:
there exist constants $\delta>0$ and $D>0$ such that
$$
\np^{\bar{\gamma}}(F)+D\bigl( \alpha(G,F) \bigr)^2\leq\np^{\bar{\gamma}}(G)\,,
$$
for every set $G\subset\Pi^N$ having $|G|=|F|$ that satisfies
$$
\partial G=\{ x+\Psi(x)\nu_F(x) \,:\, x\in\partial F \}\,,
$$
where $\Psi\in W^{2,p}(\partial F)$ is such that:
\begin{itemize}
\item $\|\Psi\|_{W^{2,p}(\partial F)}\leq\delta$,
\item $G$ restricted to every $1/k$-periodicity cell has the same volume of the set $F$ restricted to the same periodicity cell,
\item the restriction of $\Psi$ on each $1/k$-periodicity cell is $1/k$-periodic.
\end{itemize}

\begin{figure}
\includegraphics[scale=1.2]{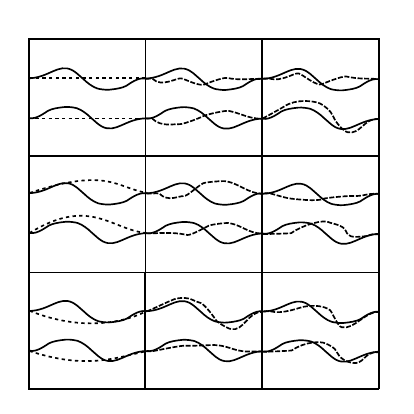}
\caption{An example of a $1/k$-periodic set $F$ (bold lines denotes $\partial F$) and of an admissible competitor $G$ (dotted lines denotes $\partial G$).}\label{fig:competitor}
\end{figure}

To prove it, we reason as follows: take the family $(E_{\gamma_k})_k$ constructed in the proof of Theorem \ref{thm:main}. Then, it holds that
$v_{E^k_{\gamma_{k}}}(x)=k^{-2}v_{E_{\gamma_{k}}}(kx)$, and thus
\begin{equation}\label{eq:conv}
\|\nabla v_{E^k_{\gamma_{k}}}\|_{L^\infty}\rightarrow 0\,,
\end{equation}
as $k\rightarrow\infty$.
Now take a function $\varphi\in T^\bot(\partial E^k_{\gamma_{k}})$ with zero average in each periodicity cell and such that the restriction of $\varphi$ on each $1/k$-periodicity cell is $1/k$-periodic. We would like to compute the second variation of $\mathcal{F}^{\bar{\gamma}}$ computed at $E^k_{\gamma_{k}}$, in $\varphi$.
Since the sets $E^k_{\gamma_{k}}$'s are critical, this is given by \eqref{eq:pfquad}:
\[
\begin{split}
\int_{\partial E^k_{\gamma_{k}}}& \bigl( |D_{\tau}\varphi|^2-|B_{\partial E^k_{\gamma_{k}}}|^2\varphi^2 \bigl) \,\dh
    +4\bar{\gamma}\int_{\partial E^k_{\gamma_{k}}} (\partial_{\nu_{E^k_{\gamma_{k}}}}v^{E^k_{\gamma_{k}}}) \varphi^2\,\dh \\
& \quad +8\bar{\gamma}\int_{\partial E^k_{\gamma_{k}}}\int_{\partial E^k_{\gamma_{k}}}G_{\Pi^N}(x,y) \varphi(x) \varphi(y)\,\dh(x)\dh(y)\,.
\end{split}
\]
Notice that:
\begin{itemize}
\item the first term is strictly positive for $k$ large: indeed, since $\varphi$ satisfies the two conditions above, this follows by using a rescaling argument, the fact that $E_{\gamma_{k}}\rightarrow E$ in
$C^{3,\beta}$ and that $E$ is strictly stable for the area functional,
\item the second term is uniformly small with respect to $\varphi$, by \eqref{eq:conv},
\item the last term is non-negative, since it can be written as in \eqref{eq:write}.
\end{itemize}

Thus, we have that, for $k$ large enough, the sets $E^k_{\gamma_{k}}$'s are strictly stable with respect to this kind of admissible functions $\varphi$'s.
Thus, it is possible to reasoning as in the proof of \cite[Theorem~3.9]{AFM} in order to obtain the claimed local minimality property.
\end{remark}


\bigskip
\bigskip
\noindent
{\bf Acknowledgments.}
{The author wishes to thank Massimiliano Morini for having introduced him to the study of this problem and for multiple helpful discussions we had during the preparation of the paper, as well as the referees for having proposed several suggestions to improve the paper.

The author thanks SISSA and the Center for Nonlinear Analysis at Carnegie Mellon University for their support during the preparation of the manuscript.
The research was partially supported by National Science Foundation under Grant No. DMS-1411646.
The results contained in the paper are part of the Ph.D. thesis of the author.
}


\bibliographystyle{siam} 
\bibliography{bibliografia}

\end{document}